\documentclass[12pt]{amsart}
\usepackage{hyperref}

\headheight=6.15pt
\textheight=8.75in
\textwidth=6.5in
\oddsidemargin=0in
\evensidemargin=0in
\topmargin=0in

\newtheorem{theorem}{Theorem}[section]

\newtheorem{lemma}[theorem]{Lemma}
\newtheorem{proposition}[theorem]{Proposition}

\theoremstyle{theorem}

\theoremstyle{definition}
\newtheorem{definition}[theorem]{Definition}

\theoremstyle{remark}

\numberwithin{equation}{section}

\newcommand{\p}{\partial}
\newcommand{\n}{\nabla}

\newcommand{\fder}[2]{\frac{\partial #1}{\partial #2}}

\newcommand{\la}{\langle}
\newcommand{\ra}{\rangle}

\begin{document}
\title{Free boundary minimal surfaces in products of balls}
\author{Jaigyoung Choe}
\address{Korea Institute for Advanced Study, Seoul, 02455, Korea}
\email{choe@kias.re.kr}
\author{Ailana Fraser}
\address{Department of Mathematics \\
                 University of British Columbia \\
                 Vancouver, BC V6T 1Z2}
\email{afraser@math.ubc.ca}
\author{Richard Schoen}
\address{Department of Mathematics \\
                 Stanford University \\
                 Stanford, CA 94305 
                 \& Department of Mathematics \\
                 University of California \\
                 Irvine, CA 92617}
\email{schoen@stanford.edu}
\thanks{2020 {\em Mathematics Subject Classification.} 53A10, 58E12, 58C40. \\
J. Choe was supported in part by NRF-RS-2023-00246133,
A. Fraser was supported in part by  the 
Natural Sciences and Engineering Research Council of Canada, R. Schoen
was supported in part by NSF grant DMS-2005431. This material is based upon work supported by the National Science Foundation under Grant No. DMS-1928930, while the authors were in residence at the Simons Laufer Mathematical Sciences Institute (formerly MSRI) in Berkeley, California, during the Fall 2024 semester.}

\begin{abstract} In this paper we develop an extremal eigenvalue approach to the problem of construction of free boundary minimal surfaces in the product
of Euclidean balls of chosen radii. The extremal problem involves a linear combination of normalized mixed Steklov-Neumann eigenvalues. The problem is motivated by the Schwarz $P$-surface which is a free boundary minimal surface in a cube. We show that the problem often does not have an absolute maximum in the product case even though it is bounded from above. By imposing a finite group of symmetries on both the surface and on the eigenfunctions we construct at least one free 
boundary minimal surface in a rectangular prism with arbitrary side lengths. We further show that unless the rectangular prism is a cube there are at least two such surfaces. We also prove that any immersed free boundary minimal surface of genus $0$ with one boundary component on each face
of a rectangular prism, and that is invariant under the reflections interchanging opposite faces of the prism, is necessarily embedded. Finally we show that for a genus 
$0$ surface with $6$ boundary components and suitable reflection symmetries there is a maximizing metric which can be realized by a free boundary minimal immersion into a product of Euclidean balls.  
\end{abstract}

\maketitle

\section{Introduction} There has been a great deal of success in recent years in the construction and geometric understanding of free boundary 
minimal surfaces in Euclidean balls (see for example \cite{FS1}, \cite{FS2}, \cite{Li}, \cite{P1}, \cite{P2}, \cite{P3}, \cite{KKMS}). In this paper we develop an analogous theory for free boundary minimal surfaces in products
of Euclidean balls of arbitrarily chosen radii. The work is partially motivated by the Schwarz $P$-surface which is a genus $0$ free boundary minimal
surface with $6$ boundary components, one on each face of a cube in $\mathbb R^3$. One can ask if such a surface has an eigenvalue characterization,
and whether it can be generalized to free boundary surfaces in arbitrary rectangular prisms (products of three intervals with arbitrary lengths). 

In the first part of the paper we develop a general theory which relates particular eigenvalue problems to free boundary minimal surfaces in products of
balls. We begin with a surface $M$ with a finite number of boundary components. We divide the boundary components into $k$ nonempty disjoint
collections of components denoted $\Gamma_1,\Gamma_2,\ldots, \Gamma_k$. Thus each $\Gamma_i$ is a nonempty collection of boundary components
with $\Gamma_i\cap\Gamma_j=\phi$ for $i\neq j$, and each boundary component of $M$ lies in some $\Gamma_i$. For each $i$ we consider an
eigenvalue problem $(SN_i)$ with Steklov boundary condition on the boundary components of $\Gamma_i$ and Neumann boundary condition on the other
boundary components. For a given metric $g$ on $M$ we have a lowest nonzero first eigenvalue $\sigma_1^{(i)}(g)>0$ for $i=1,\ldots,k$. Given positive
numbers $a_1,a_2,\ldots, a_k$ we then define a function
\[ F(g)=\sum_{i=1}^ka_i^2L_i(g)\sigma_1^{(i)}(g)
\]
where $L_i(g)$ denotes the total length of the boundary components in $\Gamma_i$ with respect to $g$. We show that if $k\geq 2$ then although
$F(g)$ is bounded from above, its maximum is not realized by a smooth metric. 

For this reason we consider metrics that are invariant under a finite group $G$ of diffeomorphisms of $M$. We prove that if $g$ maximizes $F$
among $G$-invariant metrics, then $g$ is realized on an equivariant free boundary minimal immersion to a product of balls 
$B^{n_1}(a_1)\times\cdots\times B^{n_k}(a_k)$ of radii $a_i$ and some dimensions $n_i$. 

The existence then comes down to the choice of surface $M$, the $\Gamma_i$, and the group $G$ of diffeomorphisms. In the remainder of the 
paper we consider the case in which $M$ is a genus $0$ surface with $6$ boundary components where we group them into three sets of two each 
$\Gamma_1,\Gamma_2,\Gamma_3$. We consider three commuting reflection symmetries $\rho_1,\rho_2,\rho_3$ where $\rho_i$ interchanges the two boundary components of $\Gamma_i$ and preserves the other boundary components. A topological model for such a surface would be the domain in $\mathbb S^2$
gotten by removing disks of equal radii centered on the positive and negative coordinate axes with symmetries being reflection across the coordinate
planes. In the remainder of the paper we do a careful analysis of the extremal problem for $F$ among $G$-invariant metrics. 

We prove two main existence results. First we show that for any choices of $a_1,a_2,a_3$ there is a free boundary minimal immersion of $M$ to
the rectangular prism with side lengths $a_1,a_2,a_3$. We obtain this result by restricting to eigenfunctions for $(SN_i)$ that are odd with respect to
$\rho_i$ and even with respect to $\rho_j$ for $j\neq i$. Thus we take
\[ F^o(g)=\sum_{i=1}^ka_i^2L_i(g)\sigma_1^{(io)}(g)
\]
where $\sigma_1^{(io)}$ denotes the first odd eigenvalue of $(SN_i)$. We describe explicitly the maximizer of $F^o$ in a conformal class, denoting this
value by $\bar{F}^o(p)$ where $p$ is a point in the moduli space of conformal structures. 
We obtain the minimal immersion as a critical point of the function $\bar{F}^o$ on the moduli space of $G$-invariant conformal structures. This critical 
point is not a global maximum; in fact, $\bar{F}^o$ is not bounded from above on the moduli space. 

We go on to show that, unless the rectangular prism is a cube, there are at least two such free boundary surfaces, and that each one is embedded. 
The multiplicity proof exploits the non-invariance of the energy functional under the conformal symmetry group of the surface. The proof of
embeddedness applies to any free boundary minimal immersion of a genus $0$ surface to a rectangular prism having one boundary component
on each face of the prism and that is invariant under the reflections interchanging opposite faces of the prism. 

Finally we show that $F$ achieves its maximum on the space of $G$-invariant metrics on $M$. This produces a minimal immersion into a product of
three balls $\Pi_{i=1}^3\mathbb B^3(a_i)$ contained in $\mathbb R^9$. We do not expect this surface to lie in $\mathbb R^3$ in general.  

To put the paper in a broader context and to include references to earlier work, we make some comments about other possible approaches. First,
we know that free boundary minimal surfaces in a domain are those that are stationary with respect to variations that preserve the domain,
but do not necessarily fix the boundary pointwise.  While the surfaces we construct do have this variational property, it seems difficult to construct
them from min-max methods because the surfaces are also required to miss the lower dimensional skeletons of the boundary of the rectangular
prism (or more generally the singular part of the boundary of the product domain). It is not clear whether there is a min-max approach that could
accomplish this. Secondly we observe that the surfaces we construct in the three dimensional case extend to triply periodic minimal surfaces in
$\mathbb R^3$. An extensive existence theory for triply periodic minimal surfaces has been developed by W. Meeks \cite{Me}. The surfaces of
\cite{Me} do not generally have fundamental domains in which they are free boundary minimal surfaces. It is also natural to attempt to construct
free boundary minimal surfaces in general rectangular prisms by deforming the cube and considering how the Schwarz P-surface deforms. Such
an approach involves understanding the null space of the Jacobi operator on the Schwarz P-surface and involves a delicate perturbation theory.
This approach was considered by M. Koiso, P. Piccione, and T. Shoda \cite{KPS} for some triply periodic minimal surfaces including the Schwarz
P-surface. It might be possible to use this result to construct our surfaces for rectangular prisms with nearly equal sides, but it seems difficult to handle
large deformations of the prism since it would be necessary to show that the surfaces do not degenerate when the sides are deformed by
a large amount. Finally we point out the construction of minimal surfaces from discrete approximations developed by A. Bobenko, T. Hoffman,
and B. Springborn \cite{BHS}. The paper contains a construction of the Schwarz P-surface and many others. It is interesting to see if such
a construction can be extended to the general case of rectangular prisms. 

\section{Preliminaries: mixed Steklov-Neumann problem} \label{section:preliminaries}

Let $(M,g)$  be a Riemannian manifold with boundary  $\partial  M$.   Assume that  $\partial M$  is  the disjoint union of $\mathcal{F}$ and $\mathcal{B}$. Consider the eigenvalue problem 
\begin{equation} \label{equation:steklov-neumann}
\begin{cases}
        \Delta_g  f =0   & \mbox{  in } M, \\
        \nu f =0  & \mbox{ on } \mathcal{B}, \\
        \nu f =\sigma f  & \mbox{  on } \mathcal{F}
\end{cases}
\end{equation}
where $\nu$ is the outward unit conormal along  $\partial M$.
When $\mathcal{F}$ is empty we obtain the Neumann problem, and when $\mathcal{B}$ is empty we obtain the Steklov problem.
When $\mathcal{B}$  and $\mathcal{F}$  are both nonempty we obtain  the {\em mixed Steklov-Neumann problem} (\ref{equation:steklov-neumann}),  which has a discrete set of eigenvalues
\[
       0=\sigma_0 < \sigma_1 \leq  \sigma_2 \leq \cdots \leq \sigma_k \leq \cdots \rightarrow \infty
\]
and each eigenvalue has finite multiplicity (see for example \cite[Chapter III]{B}). 
The first  nonzero eigenvalue $\sigma_1$  has the variational characterization
\[
      \sigma_1=\inf_{\int_{\mathcal{F}} u=0}
       \frac{\int_M \|\n u\|^2 \,dv_M}{\int_{\mathcal{F}} u^2 \,dv_{\partial M}}
\]
and in general,
\[
        \sigma_k= \inf \left\{\frac{\int_M \|\n u\|^2 dv_M}{\int_{\mathcal{F}} u^2 \, dv_{\p M}} \; : \; 
        \int_{\mathcal{F}} u \phi_j=0 \mbox{ for } j=0, 1, 2, \ldots, k-1 \; \right\}
\]      
where $\phi_j$ is an eigenfunction corresponding to the eigenvalue $\sigma_j$, for $j=1, \ldots, k-1$.
Alternatively,
\[ \label{equation:minmax}
     \sigma_k=\inf_{E \in \mathcal{E}(k+1)} \sup_{0 \neq u \in E} 
     \frac{\int_M \|\n u\|^2 \,dv_M}{\int_{\mathcal{F}} u^2 \,dv_{\partial M}}
\]
where $\mathcal{E}(k+1)$ is the set of all $(k+1)$-dimensional subspaces of $W^{1,2}(M)$.

The mixed Steklov-Neumann problem can  be viewed as a special case of the weighted  Steklov problem
\[
\begin{cases}
      \Delta_g f=0 & \mbox{  on } M \\
      \nu f =\sigma \rho f & \mbox{ on } \partial M
\end{cases}
\]
where $\rho \in L^\infty$ is a non-negative nonzero function, when $\rho$ is zero on $\mathcal{B}$, or as a  special case of the general setting of eigenvalues on measure spaces with measures supported on the boundary (that is, with boundary volume measures) as in \cite{K}. In particular, the same eigenvalue bounds that  hold in these settings hold for the mixed Steklov-Neumann eigenvalues. For surfaces we have:
\begin{proposition}[\cite{K}, \cite{FS1}, \cite{Ka}]
Let $(M,g)$ be a Riemannian surface of genus $\gamma$ with $b$  boundary components, then
\[
     \sigma_k(g) L_g(\mathcal{F}) \leq \min \{ C(\gamma+1) k, 2\pi (k+\gamma+b-1) \},
\]
where $C$ is independent of the metric $g$.
\end{proposition}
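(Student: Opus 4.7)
The plan is to establish the two bounds separately, since they rest on rather different methods. For the linear bound $\sigma_k(g)L_g(\mathcal F)\le C(\gamma+1)k$, I would reinterpret the mixed Steklov--Neumann problem as a weighted Steklov problem (as the excerpt suggests), or equivalently as the eigenvalue problem on the metric measure space $(M,g,\mu)$ where $\mu$ is the one-dimensional Hausdorff measure carried by $\mathcal F$. Then I would invoke the Grigor'yan--Netrusov--Yau decomposition machinery (in the form used by Hassannezhad and by Kokarev): on a surface of genus $\gamma$, for every $k$ one can find $k$ disjoint ``capacitors'' $(A_j,A_j^\ast)$ with $A_j\subset A_j^\ast$ such that the annuli $A_j^\ast\setminus A_j$ are pairwise disjoint, each $A_j$ carries at least a fraction $c\,\mu(M)/k$ of the boundary measure, and the number that can be packed scales linearly with $k$ with a multiplicative constant depending only on $\gamma+1$. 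Plugging the associated cutoff functions into the min-max characterization of $\sigma_k$ and estimating their Dirichlet energy by the capacity of the annuli yields the first bound.

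For the topological bound $\sigma_k(g)L_g(\mathcal F)\le 2\pi(k+\gamma+b-1)$, I would follow the Hersch--Payne--Schiffer strategy in the form extended to Steklov eigenvalues by Fraser--Schoen and, in the sharper ``$k+\gamma+b-1$'' form, by Karpukhin. The idea is to use a holomorphic/meromorphic map $\Phi:M\to\mathbb D$ (or to $\mathbb{CP}^1$) of controlled degree: a bordered Riemann surface of genus $\gamma$ with $b$ boundary components admits a proper holomorphic map to the disk of degree at most $\gamma+b$ (Ahlfors), so its components furnish candidate test functions on the boundary. One then applies a center-of-mass balancing argument with the group of conformal automorphisms of the disk to produce, for any $j\le k$, a test function $u_j$ orthogonal to $\phi_0,\dots,\phi_{j-1}$ on $\mathcal F$. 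The factor $2\pi$ arises because the Dirichlet energy $\int_M|\nabla u|^2$ is a conformal invariant and $u=\Phi_\ast v$ satisfies $\int_M|\nabla u|^2\le(\deg \Phi)\int_{\mathbb D}|\nabla v|^2$; optimizing over $v$ pulled back from the disk and using $\int_{\partial \mathbb D}|\nabla v|^2\le 2\pi\sigma_j(\mathbb D)\int_{\partial\mathbb D}v^2$ produces the desired bound, with the index shift $k+\gamma+b-1$ coming from the degree of $\Phi$.

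The main obstacle I expect is the balancing step in Part 2: ensuring that a single meromorphic/holomorphic map can simultaneously be rearranged by Möbius transformations so that $k$ independent components are orthogonal to the first $k$ eigenfunctions on $\mathcal F$. A standard fixed-point/degree argument does this for one function at a time, but the sharp constant $k+\gamma+b-1$ requires carefully combining the degree of $\Phi$ with the min-max index through a Borsuk--Ulam-type or linear-algebra argument, and tracking that the mixed boundary condition (Neumann on $\mathcal B$) does not spoil conformal invariance of the energy. Part 1's obstacle is more technical: the GNY covering lemma must be adapted so that the relevant measure lives only on $\mathcal F\subsetneq\partial M$, which requires capacity estimates for annuli that may touch $\mathcal B$ without being penalized, but this is handled by working with the Neumann capacity rather than the full Sobolev capacity.
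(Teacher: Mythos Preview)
The paper does not give its own proof of this proposition: it is stated in the preliminaries with attributions to \cite{K}, \cite{FS1}, and \cite{Ka}, and immediately after it the paper moves on to Section~3. So there is no ``paper's proof'' to compare your attempt against; the result is quoted as known background.

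That said, your outline is a faithful summary of how the cited references establish the two halves of the bound. The linear bound $\sigma_k L_g(\mathcal F)\le C(\gamma+1)k$ is indeed Kokarev's theorem \cite{K}, proved via the Grigor'yan--Netrusov--Yau capacitor construction applied to the boundary measure supported on $\mathcal F$; your remark that one must use the Neumann capacity so that pieces of $\mathcal B$ inside the annuli do not contribute is the correct technical point. The second bound $\sigma_k L_g(\mathcal F)\le 2\pi(k+\gamma+b-1)$ combines the Ahlfors map / Hersch--Payne--Schiffer argument of \cite{FS1} (which gives the case $k=1$ with constant $2\pi(\gamma+b)$) with the extension to higher $k$ in \cite{Ka}. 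One small correction: in the higher-$k$ argument the index shift does not come from a single balancing step producing $k$ simultaneously orthogonal test functions, but rather from slicing the boundary by level sets of the Ahlfors map into $k$ pieces and building one test function per piece, each orthogonal to the constants on its own piece; this is the standard Hersch--Payne--Schiffer mechanism and avoids the Borsuk--Ulam complication you flagged. With that adjustment your sketch matches the literature the paper is invoking.
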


\section{Eigenvalue characterization of free boundary minimal submanifolds in products of balls}

In this section  we establish a connection between the Steklov-Neumann eigenvalue problem and free boundary minimal surfaces in rectangular prisms, and more generally, products of balls in $\mathbb{R}^n$.

\begin{lemma} \label{lemma:characterization}
Let $\Sigma^m$ be a properly immersed submanifold in a rectangular prism $P= [ -a_1,a_1] \times [-a_2, a_2] \times \cdots \times [-a_n,a_n]$ in $\mathbb{R}^n$, with each component of  the boundary $\partial \Sigma$ lying in a face 
$F^{\pm}_i=(-a_1, a_1) \times \cdots \times (-a_{i-1}, a_{i-1}) \times \{x_i=\pm a_i\} \times (-a_{i+1}, a_{i+1}) \times \cdots \times  (-a_n, a_n)$ 
of the prism, for  some $i$, $1 \leq i \leq n$. Then $\Sigma^m$ is a minimal submanifold that  meets  $\partial P$ orthogonally if and only  if the coordinate  functions of $\Sigma$ in $\mathbb{R}^n$ are Steklov-Neumann eigenfunctions:
\[
\begin{cases}
        \Delta_{\Sigma}  x_i =0   & \mbox{  on } \Sigma, \\
        \nu x_i=0  & \mbox{ on } \partial \Sigma \cap F_j^\pm, \;  j=1, \ldots, n, \, j\ne i, \\
        \nu  x_i =\frac{1}{a_i} x_i  & \mbox{  on } \partial \Sigma  \cap F_i^\pm
\end{cases}
\]
for $i=1, \ldots, n$.
\end{lemma}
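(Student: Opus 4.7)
The plan is to decouple the statement into its interior part (minimality) and its boundary part (orthogonal meeting), and to observe that each of the $n$ Euclidean coordinate functions $x_i$ restricted to $\Sigma$ records exactly one scalar component of the corresponding geometric condition.

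First I would invoke the standard identity $\Delta_\Sigma \vec{x} = m\vec{H}$ for the position vector of an isometrically immersed submanifold $\Sigma^m \subset \mathbb{R}^n$, where $\vec{H}$ is the mean curvature vector. Componentwise this reads $\Delta_\Sigma x_i = m H_i$, so minimality of $\Sigma$ is equivalent to $\Delta_\Sigma x_i = 0$ for all $i = 1, \ldots, n$. This handles the interior equations of the eigenvalue system in both directions simultaneously.

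Next I would analyze the boundary condition at a point $p \in \partial \Sigma \cap F_i^\pm$. The face $F_i^\pm$ has outward unit normal $\pm e_i$ in $\mathbb{R}^n$, so meeting $\partial P$ orthogonally at $p$ amounts to the outward unit conormal $\nu \in T_p\Sigma$ being equal to $\pm e_i$ (with the sign matching the face). Since $\nabla^{\mathbb{R}^n} x_j = e_j$ is tangent to $\mathbb{R}^n$, we have $D_\nu x_j = \langle \nu, e_j \rangle$ for each $j$. Therefore the orthogonality condition $\nu = \pm e_i$ is equivalent to the pair of scalar statements $\langle \nu, e_j \rangle = 0$ for $j \neq i$ and $\langle \nu, e_i \rangle = \pm 1$; using $x_i = \pm a_i$ on $F_i^\pm$ the latter rewrites as $D_\nu x_i = x_i / a_i$. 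Reading these off for each index $j$ separately gives precisely the boundary conditions in the lemma: on $\partial \Sigma \cap F_i^\pm$ the coordinate $x_i$ satisfies the Steklov-type equation $D_\nu x_i = x_i/a_i$, while every other coordinate $x_k$, $k \neq i$, satisfies the Neumann condition $D_\nu x_k = 0$.

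For the converse direction, assuming the eigenvalue system holds, the interior equations again yield $\vec{H} = 0$ by the identity above, and at a point of $\partial \Sigma \cap F_i^\pm$ the vanishing of $D_\nu x_j = \langle \nu, e_j \rangle$ for every $j \neq i$ forces $\nu$ to be parallel to $e_i$, while $D_\nu x_i = x_i/a_i = \pm 1$ pins down the sign, giving $\nu = \pm e_i$ and hence orthogonal meeting. There is no real obstacle here; the only subtlety worth stating cleanly is the reduction of the vector identity $\nu = \pm e_i$ to its $n$ scalar components, which is what allows the single geometric free boundary condition to appear distributed across $n$ separate scalar Steklov-Neumann problems, one per coordinate.
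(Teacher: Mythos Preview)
Your proof is correct and follows essentially the same route as the paper: both use the identity $\Delta_\Sigma x = H$ (up to a normalization constant) to equate minimality with harmonicity of the coordinates, and both reduce the orthogonal-meeting condition on $F_i^\pm$ to the vector equation $\nu=\pm e_i$, then read off its $n$ scalar components as the Steklov condition on $x_i$ and Neumann conditions on $x_j$ for $j\neq i$. Your write-up is slightly more explicit in the converse direction, but the argument is the same.
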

\begin{proof}
Let $\Sigma$ be a properly immersed surface in a rectangular prism $P$ as in the statement.  

If $x=(x_1, \ldots, x_n)$ is the position vector, then it is well known  that $\Delta_\Sigma  x = H$, where $H$ is the mean curvature of $\Sigma$ in  $\mathbb{R}^n$, and $\Sigma$ is minimal if and only if the coordinate functions $x_i$ of $\Sigma$ in $\mathbb{R}^{n}$ are harmonic functions, $\Delta_\Sigma x_i=0$ on $\Sigma$ for $i=1, \ldots, n$. 

If $\nu$ is the outward unit conormal along $\p \Sigma$, then $\Sigma$ meets $\partial P$ orthogonally  if and only if $\nu =\pm e_i$ on $\partial \Sigma \cap F_i^\pm$, $i=1, \ldots, n$,  where $e_i$ denotes the $i$-th standard basis vector of $\mathbb{R}^n$, or equivalently
\[
            \nu x=\nu = \pm e_i 
            = (0, \ldots, 0 , \tfrac{1}{a_i} x_i, 0, \ldots, 0) \mbox{ on } \partial \Sigma \cap F_i^\pm,
\]
i.e.  $\nu x_i=\frac{1}{a_i} x_i$ and $\nu x_j=0$ for $j \ne i, \, 1 \leq j \leq n$ on $\partial \Sigma \cap F_i^\pm$.
\end{proof}

For the remainder of this section we suppose that 
$(M,  g)$ is a two-dimensional Riemannian manifold with boundary $\partial M = \Gamma_1 \cup \cdots \cup \Gamma_k$ , with $\Gamma_i \neq  \emptyset$ and $\Gamma_i \cap \Gamma_j = \emptyset$ for $i \ne j$,  $1 \leq i, j\leq k$. For each $i=1, \ldots, k$, consider the mixed Steklov-Neumann eigenvalue problem 
\[
(SN_i)  \quad
\begin{cases}
        \Delta_g  f =0   & \mbox{  in } M, \\
        \nu f =\sigma^{(i)} f  & \mbox{ on } \Gamma_i, \\
        \nu f =0  & \mbox{  on } \partial M \setminus \Gamma_i
\end{cases}
\]
with spectrum denoted by
\[
       0=\sigma_0^{(i)} < \sigma_1^{(i)} \leq  \sigma_2^{(i)} \leq  \cdots \rightarrow \infty.
\]
Let $V^{(i)}_j$ denote the eigenspace corresponding to $\sigma_j^{(i)}$.

If  $M$ is immersed, with induced metric $g$, as  a free boundary minimal surface in a rectangular prism in $\mathbb{R}^k$, with $\Gamma_i \subset F_i^\pm$ for $i = 1, \ldots,  k$  then by Lemma \ref{lemma:characterization}, we have
\begin{align*}
      2 \mbox{Area}(M) &  = \int_{M} \|\nabla x\|^2 \\
      &= \int_{\partial M} x \cdot \nu x  \\
      &= \int_{\Gamma_1} x \cdot \nu x + \cdots + \int_{\Gamma_k} x \cdot \nu x \\
      &= \int_{\Gamma_1} \sigma^{(1)} x_{1}^2 + \cdots + \int_{\Gamma_k} \sigma^{(k)} x_{k}^2  \\    
      &= a_{1}^2\sigma^{(1)} L(\Gamma_1) + \cdots + a_{k}^2 \sigma^{(k)}  L (\Gamma_k)
\end{align*}
where $\sigma^{(i)}=1/a_i$ is a Steklov-Neumann eigenvalue for ($SN_i$).
Since $M$ is a free boundary minimal surface, the first variation of area is zero for all admissible variations. This suggests that $(M,g)$ is `critical' with respect to variations of the metric for the functional
\[
  F(g):=  a_{1}^2 \, \sigma^{(1)}(g) L_g(\Gamma_1) + \cdots + a_{k}^2 \, \sigma^{(k)}(g)  L_g (\Gamma_k).
\]
Observe that the functional $F$ satisfies  a natural conformal invariance.
\begin{definition}
We say that two metrics $g_1$ and $g_2$ on a surface $M$  are {\em $F$-homothetic} (resp. {\em $F$-isometric}) if there is a conformal diffeomorphism $\varphi:M\to M$ that is a homothety (resp. an isometry) on each part $\Gamma_i$ of the boundary; that is, $\varphi^*g_2=\lambda^2 g_1$ and $\lambda=c_i$ on $\Gamma_i$ for some positive constant $c_i$ for $i=1, \ldots, k$
(resp. $\lambda=1$ on $\p M$). 
\end{definition}
Note that,
\begin{itemize}
\item If $g_1$ and $g_2$ are $F$-isometric then they have the same Steklov-Neumann
eigenvalues.
\item If $g_1$ and $g_2$ are $F$-homothetic then they have the same normalized
Steklov-Neumann eigenvalues; that is, $\sigma^{(i)}_j(g_1)L_{g_1}(\Gamma_i)=\sigma^{(i)}_j(g_2)L_{g_2}(\Gamma_i)$ for $i=1, \ldots, k$ and each $j$.
\end{itemize}

We now show that any metric that maximizes the functional $F$ among smooth metrics on the surface $M$, is $F$-homothetic to the induced metric from a free boundary minimal immersion of $M$ by first eigenfunctions into a product of balls. We will see later that in general maximizing metrics do not exist unless we impose symmetry and so we will characterize maximizing metrics more generally in the setting of equivariant optimization.

\begin{theorem} \label{theorem:characterization}
Let $G$ be a finite group acting on the surface $M$ by diffeomorphisms.
Suppose there exists a $G$-invariant metric $g_0$ on $M$ such that $F(g_0) = \sup_g F(g)$, where the supremum is over all smooth metrics on $M$ that are invariant under the action of $G$ on $M$. Then there exist independent first eigenfunctions $u^{(i)}_1, \ldots, u^{(i)}_{n_i} \in V_1^{(i)}$ for $i=1, \ldots, k$ and an orthogonal representation $\rho: G \rightarrow O(n_1) \times \cdots \times O(n_k)$ such that 
\[
     u:=(u^{(1)}_1, \ldots, u^{(1)}_{n_1}, \ldots, u^{(k)}_1, \ldots, u^{(k)}_{n_k}): M 
     \rightarrow B^{n_1}(a_1) \times \cdots \times B^{n_k}(a_k)
\]
is a $\rho$-equivariant conformal branched  free boundary minimal immersion with $u(\Gamma_i)\subset \partial B^{n_i}(a_i)$, where $B^{n_i}(a_i)$ is  the ball of radius $a_i$ centered at the origin in $\mathbb{R}^{n_i}$, and $g_0$ is $F$-homothetic to the induced metric from $u$ on $M$.  
\end{theorem}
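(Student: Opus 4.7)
The plan is to adapt the Fraser--Schoen extremal eigenvalue argument to the $G$-equivariant setting involving $k$ distinct mixed Steklov--Neumann problems simultaneously. The core of the proof is a first-variation calculation at $g_0$, combined with a Nadirashvili-type convex-duality step that converts the maximality inequality for $F$ into the pointwise free boundary minimal immersion equation.

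First I would work out the first variation of $F$ along a $G$-invariant symmetric $(0,2)$-tensor $h$. Since $\sigma_1^{(i)}(g_0)$ has multiplicity $n_i := \dim V_1^{(i)}$, the one-sided derivatives $\frac{d}{dt}\big|_{t=0^{\pm}}\sigma_1^{(i)}(g_0+th)$ are the extreme eigenvalues of a symmetric operator $A_h^{(i)}$ on $V_1^{(i)}$ whose matrix entries are given by the standard Hadamard formula,
\[
(A_h^{(i)})_{\alpha\beta} = -\!\int_M \bigl\langle h,\, du_\alpha\otimes du_\beta - \tfrac12\langle du_\alpha,du_\beta\rangle g_0\bigr\rangle\, dv_{g_0} + \tfrac{\sigma_1^{(i)}}{2}\!\int_{\Gamma_i} u_\alpha u_\beta\,\mathrm{tr}_{\Gamma_i}(h)\, dv_{\Gamma_i},
\]
while $\dot L_i = \tfrac12\int_{\Gamma_i}\mathrm{tr}_{\Gamma_i}(h)\, dv_{\Gamma_i}$. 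Maximality of $g_0$ among $G$-invariant metrics gives that the right (resp.\ left) directional derivative of $F$ at $g_0$ is $\leq 0$ (resp.\ $\geq 0$) in every such direction $h$; this derivative is the maximum (resp.\ minimum) over unit eigenvectors in each $V_1^{(i)}$ of an expression linear in the operators $A_h^{(i)}$ together with the length-variation terms.

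Next I would apply a convex-duality argument. The maximality inequality can be rephrased as a min--max statement: for every $G$-invariant $h$ there exist positive semi-definite operators $S^{(i)}(h)$ on $V_1^{(i)}$ with $\sum_i \mathrm{tr}\,S^{(i)}(h)=1$ such that a linear functional of $h$ built from $(A_h^{(i)}, \dot L_i, S^{(i)}(h))$ vanishes. A compactness plus Hahn--Banach separation argument on the convex cone $\bigoplus_i \{\text{PSD on } V_1^{(i)}\}$ produces a single collection $(S_0^{(i)})$ valid for \emph{all} admissible $h$. Diagonalizing $S_0^{(i)} = \sum_\alpha u_\alpha^{(i)}\otimes u_\alpha^{(i)}$ (absorbing positive eigenvalues into the $u_\alpha^{(i)}$) and specializing $h$ to variations supported in the interior of $M$ yields the pointwise conformal identity $\sum_{i,\alpha} du_\alpha^{(i)}\otimes du_\alpha^{(i)} = \lambda\, g_0$, while specializing $h$ to variations supported in a collar of a single $\Gamma_i$, together with a suitable $F$-homothetic rescaling within the $i$-th block, yields $\sum_\alpha (u_\alpha^{(i)})^2 \equiv a_i^2$ on $\Gamma_i$. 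Lemma \ref{lemma:characterization} then identifies $u = (u_\alpha^{(i)})$ as a conformal branched free boundary minimal immersion of $M$ into $\prod_i B^{n_i}(a_i)$, and the induced metric is $F$-homothetic to $g_0$ by construction.

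The $G$-equivariance comes almost for free: each $V_1^{(i)}$ is naturally a $G$-representation, and averaging $S_0^{(i)}$ over $G$ preserves all the properties used above while making the $G$-action on $\mathrm{span}\{u_\alpha^{(i)}\}$ orthogonal, producing the representation $\rho:G\to\prod_i O(n_i)$ and the $\rho$-equivariance of $u$. The main obstacle is the convex-duality step: since each $\sigma_1^{(i)}$ may have multiplicity $>1$ and we are summing $k$ such nondifferentiable functionals, $F$ is only directionally differentiable and its derivatives are max/min of matrix-valued linear forms. Producing a single dual certificate $(S_0^{(i)})$ simultaneously effective against every $G$-invariant variation, and verifying that the $G$-invariant test tensors are rich enough to localize both in the interior and near each $\Gamma_i$ separately so as to force the pointwise identities, constitutes the genuinely nontrivial analytic content of the argument.
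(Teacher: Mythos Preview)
Your proposal is correct and follows essentially the same route as the paper: a first-variation computation for $F$ along $G$-invariant deformations, a Hahn--Banach/convex-separation step producing eigenfunctions that satisfy the conformality relation on $M$ and the constant-norm condition on each $\Gamma_i$, and $G$-averaging to obtain equivariance. The differences are cosmetic---you package the dual certificate as a tuple of positive semidefinite operators $(S_0^{(i)})$ on the eigenspaces while the paper works with the convex hull in $L^2(S^2(M))\times\prod_i L^2(\Gamma_i)$ of elements built from unit eigenfunctions, and the paper obtains equivariance by explicitly enumerating $G$-translates of the map rather than by averaging the certificate; one small slip is that the correct normalization is $\mathrm{tr}\,S^{(i)}=1$ for each $i$ separately (one unit eigenfunction is selected from each $V_1^{(i)}$), not $\sum_i\mathrm{tr}\,S^{(i)}=1$.
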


In particular, if the multiplicity of the first Steklov-Neumann eigenspaces is one, then the maximizing metric is the induced metric from a free boundary (branched) minimal immersion of $M$ into a rectangular prism.
The proof is an extension of the proof of \cite[Proposition 5.2]{FS2} to the setting involving the Steklov-Neumann problem and the functional $F$, and adapted to the setting of maximizing among metrics with symmetries as in \cite[Theorem 2.12]{KKMS}.  

\begin{proof}
Let $g(t)$ be a smooth path of $G$-invariant metrics on $M$ with $g(0)=g_0$ and $\frac{d}{dt}g(t) =h(t)$, where  
$h(t)\in S^2(M)$ is a smooth path of $G$-invariant symmetric $(0,2)$-tensor fields on $M$.
Denote by $V_{g(t)}^{(i)}$ the eigenspace associated to the first nonzero Steklov-Neumann eigenvalue $\sigma^{(i)}_1(t)$ of $(M,g(t))$. 
It follows from the arguments of \cite[Lemma 5.1]{FS2} that $\sigma^{(i)}_1(t)$ is a Lipschitz function of $t$, and if $\dot{\sigma}_1^{(i)}(t)$ exists, then
\[
       \dot{\sigma}_1^{(i)}(t) = -\int_M \la \tau(u) , h \ra \; dA_t 
         -\tfrac{\sigma_1^{(i)}(t)}{2} \int_{\Gamma_i} u^2 h(T,T) \; ds_t,
\]
for any $u \in V_{g(t)}^{(i)}$  with $\|u\|_{L^2(\Gamma_i)}=1$, where $T$ is the unit tangent to $\p M$ for the metric $g(t)$, and
where $\tau(u)$ is the stress-energy tensor of $u$ with respect to the metric $g(t)$,
\[
    \tau(u)= du \otimes du -\tfrac{1}{2} \|\n u\|^2 g.
\]
Therefore $F(t):=F(g(t))$ is also a Lipschitz function of $t$, and at a point where the derivative exists,
\[
     \dot{F}(t)
       := Q_h(u^{(1)}, \ldots, u^{(k)})     
\]
where  
\[   
     Q_h(u^{(1)}, \ldots, u^{(k)})     =-\int_M \la \, \sum_{i=1}^{k } a_i^2 L_i \,  \tau(u^{(i)}), h \, \ra \; dA_t
       - \sum_{i=1}^k \int_{\Gamma_i} \tfrac{\sigma^{(i)}_1}{2} a_i^2  (L_i \,  (u^{(i)})^2-1) \, h(T,T)  \; ds_t 
\]
for any $u^{(i)} \in V_{g(t)}^{(i)}$ with $\|u^{(i)}\|_{L^2(\Gamma_i)}=1$, $i=1, \ldots, k$, where $L_i=L(\Gamma_i)$.

Consider the Hilbert space $\mathcal{H}=L^2(S^2(M))\times L^2(\Gamma_1) \times  \cdots \times L^2(\Gamma_k)$, where $L^2(S^2(M))$ denotes the space of $L^2$ symmetric $(0,2)$-tensor fields on $M$. 
Let $\mathcal{H}=L^2(S^2(M))^G \times L^2(\Gamma_1)^G \times  \cdots \times L^2(\Gamma_k)^G$
where $L^2(S^2(M))^G$ denotes the space of $L^2$ symmetric $(0,2)$-tensor fields on $M$ that are $G$-invariant and $L^2(\Gamma_i)^G$ the space of $G$-invariant functions on $\Gamma_i$. Let $\pi^G: \mathcal{H} \rightarrow \mathcal{H}^G$ be the orthogonal projection,
\[
      \pi^G(\omega, f_1, \ldots, f_k)
      =\frac{1}{|G|} \sum_{\sigma \in G} (\sigma^* \omega, \sigma^* f_1, \ldots, \sigma^* f_k).
\]

\begin{lemma} \label{lemma:indefinite}
Assume $g_0$ satisfies the conditions of Theorem \ref{theorem:characterization}. For any $(\omega,f_1, \ldots, f_k) \in \mathcal{H}^G$ 
there exist $u^{(i)} \in V^{(i)}_{g_0}$ with $\|u^{(i)}\|_{L^2(\Gamma_i)}=1$, $i=1, \ldots, k$, such that 
$$\la (\omega,f_1, \ldots, f_k), ( \sum_{i=1}^{k } a_i^2 L_i \,  \tau(u^{(i)}) , \tfrac{\sigma^{(1)}_1}{2} a_1^2  (L_1 \,  (u^{(1)})^2-1) , \ldots, \tfrac{\sigma^{(k)}_1}{2} a_k^2  (L_k \,  (u^{(k)})^2-1) ) \ra_{\mathcal{H}}=0.$$
\end{lemma}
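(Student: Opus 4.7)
The strategy is to exploit the two-dimensional identity $\operatorname{tr}_{g_0}\tau(u)=0$, which means that a conformal deformation $h = 2\psi g_0$ contributes nothing to the interior term of $Q_h$. This decouples the interior and boundary pieces of an admissible variation, so that an arbitrary element $(\omega,f_1,\ldots,f_k)\in\mathcal{H}^G$ can be encoded in a single $G$-invariant perturbation of $g_0$.

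Assume first that $(\omega,f_1,\ldots,f_k)$ is smooth. Choose a smooth $G$-invariant function $\psi$ on $M$ with $2\psi|_{\Gamma_i}=f_i-\omega(T,T)|_{\Gamma_i}$ for each $i$; this is possible since the required boundary data is $G$-invariant (because $g_0$ is, so $T$ transforms compatibly), and one can extend smoothly into $M$ and then average over $G$. Set $h:=\omega+2\psi g_0$. Substituting into the formula for $Q_h$, the $2\psi g_0$ contribution to the interior term vanishes by the trace-free property, while on each $\Gamma_i$ we have $h(T,T)=\omega(T,T)+2\psi=f_i$. Writing
\[
  v(u^{(1)},\ldots,u^{(k)}):=\Bigl(\sum_{i=1}^{k} a_i^2 L_i\,\tau(u^{(i)}),\;\tfrac{\sigma_1^{(1)}}{2}a_1^2(L_1(u^{(1)})^2-1),\,\ldots,\,\tfrac{\sigma_1^{(k)}}{2}a_k^2(L_k(u^{(k)})^2-1)\Bigr),
\]
we obtain $Q_h(u^{(1)},\ldots,u^{(k)})=-\langle(\omega,f_1,\ldots,f_k),\,v(u^{(1)},\ldots,u^{(k)})\rangle_{\mathcal{H}}$, so vanishing of $Q_h$ for some choice of first eigenfunctions is precisely the claim.

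To produce such a choice, invoke the maximality of $g_0$. The map $t\mapsto F(g_0+th)$ is Lipschitz and attains its maximum at $t=0$, so its upper right Dini derivative at $0$ is $\le 0$ and its lower left Dini derivative is $\ge 0$. Standard self-adjoint perturbation theory (the $\sigma_1^{(i)}$ may have multiplicity $>1$, and the corresponding branches vary in a piecewise-linear manner to first order) identifies these one-sided derivatives of $F$ with
\[
  \min_{u^{(i)}}Q_h(u^{(1)},\ldots,u^{(k)})\;\le\;0\;\le\;\max_{u^{(i)}}Q_h(u^{(1)},\ldots,u^{(k)}),
\]
the extrema taken over the product of unit $L^2(\Gamma_i)$-spheres of the finite-dimensional eigenspaces $V^{(i)}_{g_0}$. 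Since $Q_h$ is quadratic (hence sign-invariant) in each $u^{(i)}$ and the product of projectivizations of the $V^{(i)}_{g_0}$ is connected and compact, the image of $Q_h$ is a closed interval in $\mathbb{R}$ containing values of both signs, and hence it contains $0$. This yields normalized eigenfunctions with $Q_h=0$.

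For general $(\omega,f_1,\ldots,f_k)\in\mathcal{H}^G$, approximate by smooth $G$-invariant data in $\mathcal{H}$. For each approximant the previous step supplies normalized $u_n^{(i)}$ in the compact unit spheres of the finite-dimensional $V^{(i)}_{g_0}$; passing to a subsequence with $u_n^{(i)}\to u^{(i)}$ and using continuity of the pairing in both arguments transfers the orthogonality to the limit. The main technical point to watch is the simultaneous perturbation bookkeeping for the $k$ first eigenvalues with possibly higher multiplicity, so that the one-sided derivatives of $F$ genuinely identify with $\min/\max$ of $Q_h$ and not merely an inequality in one direction; once that is set up, the connectedness/intermediate value step is essentially automatic.
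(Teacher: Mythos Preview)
Your argument is correct and reaches the same conclusion, but the route differs from the paper's in two places worth noting.

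First, to encode an arbitrary $(\omega,f_1,\ldots,f_k)\in\mathcal H^G$ as a single admissible variation, you exploit the two-dimensional identity $\operatorname{tr}_{g_0}\tau(u)=0$: adding a conformal piece $2\psi g_0$ lets you adjust $h(T,T)$ on each $\Gamma_i$ freely without touching the interior pairing. The paper instead takes a smooth approximation of $\omega$ and simply \emph{redefines} it in a thin collar of $\partial M$ so that $h(T,T)|_{\Gamma_i}=\tilde f_i$, the point being that a sufficiently thin collar has small $L^2$ mass. Your trick is cleaner but surface-specific; the paper's works in any dimension.

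Second, to extract the sign information, you invoke the perturbation-theoretic fact that the one-sided derivatives of $\sigma_1^{(i)}$ (hence of $F$) at $t=0$ exist and equal the $\min$/$\max$ of the linearized functional over the first eigenspace, and then run an intermediate value argument on the connected product of eigenspheres to hit $0$. The paper avoids the one-sided derivative formula altogether: it uses only Lipschitz continuity of $t\mapsto F(g_0+th)$ together with the fundamental theorem of calculus to find nearby times $t_j\to 0^\pm$ at which $\dot F$ exists and has the required sign, takes first eigenfunctions at those times, and passes to $C^2$ limits via elliptic estimates. This yields eigenfunctions $u^{(i)\pm}\in V^{(i)}_{g_0}$ with $Q_h(u^{(\cdot)+})\le 0\le Q_h(u^{(\cdot)-})$; strictly speaking the paper stops there (indefiniteness is all that the subsequent Hahn--Banach step needs), whereas you go on to produce an exact zero via connectedness. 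Your version thus proves the lemma as literally stated; the paper's proof proves what is actually used.
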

\begin{proof}
Let $(\omega,f_1, \ldots, f_k) \in \mathcal{H}^G$. 
Since $C^\infty(S^2(M)) \times C^\infty(\Gamma_1) \times \cdots \times C^\infty(\Gamma_k)$ is dense in $L^2(S^2(M)) \times L^2(\Gamma_1) \times \cdots \times L^2(\Gamma_k)$ and $G$ is finite, we can approximate $(\omega, f_1, \dots, f_k)$ arbitrarily closely in $L^2$ by smooth $G$-invariant $(h,\tilde{f}_1, \ldots, \tilde{f}_k)$. 
We may redefine $h$ in a neighborhood of the boundary to a smooth $G$-invariant tensor whose restriction to $\Gamma_i$ is equal to the function $\tilde{f}_i$ for $i=1, \ldots, k$, and such that the change in the $L^2$ norm is arbitrarily small. In this way, we obtain a sequence of smooth $G$-invariant $h_n$ 
such that $(h_n, h_n|_{\Gamma_1}(T,T), \ldots, h_n|_{\Gamma_k}(T,T)) \rightarrow (\omega,f_1, \ldots, f_k)$ in $L^2$.

Let $g_n(t)=g_0+ th_n$. 
Then $g_n(0)=g_0$  and $\frac{dg_n}{dt}\big|_{t=0} =h_n$.
Given any $\epsilon >0$, by the fundamental theorem of calculus, 
\[
     \int_{-\epsilon}^0 \dot{F}(t) \; dt = F(0) - F(-\epsilon) \geq 0
\]
by the assumption on $g_0$. Therefore there exists $t$, $-\epsilon < t < 0$, such that 
$\dot{F}(t)$ exists and $\dot{F}(t) \geq 0$. Let $t_j$ be a sequence of points with $t_j<0$ and $t_j \rightarrow 0$, such that $\dot{F}(t_j)$  exists and $\dot{F}(t_j) \geq 0$. Choose $u_j^{(i)} \in V^{(i)}_{g_n(t_j)}$ with 
$||u_j^{(i)}||_{L^2(\Gamma_i, g_n(t_j))}=1$. Elliptic boundary estimates (\cite{M}) give bounds on $u_j^{(i)}$ and its derivatives up to the boundary, thus after passing to a subsequence $u_j^{(i)}$ converges in $C^2(M)$ to an eigenfunction $u^{(i)-}_n \in V_{g_n}^{(i)}$ with $||u^{(i)-}_n||_{L^2(\Gamma_i,g_n)}=1$.
Since $Q_{h_n}(u_j^{(1)}, \ldots, u_j^{(k)})=\dot{F}(t_j) \geq 0$, it follows that $Q_{h_n}(u^{(i)-}_n) \geq 0$. By a similar argument, taking a limit from the right, there exists $u^{(i)+}_n \in V_{g_n}^{(i)}$ with $||u^{(i)+}_n||_{L^2(\Gamma_i,g_n)}=1$, such that $Q_{h_n}(u^{(i)+}_n) \leq 0$.

As above, after passing to subsequences, $u^{(i)+}_n \rightarrow u^{(i)+}$ and $u^{(i)-}_n \rightarrow u^{(i)-}$ in $C^2(M)$, and
\begin{align*}
     \la &(\omega,f_1, \ldots, f_k),  ( \sum_{i=1}^{k } a_i^2 L_i \,  \tau(u^{(i)+}) , \tfrac{\sigma^{(1)}_1}{2} a_1^2  (L_1 \,  (u^{(1)+})^2-1) , \ldots, \tfrac{\sigma^{(k)}_1}{2} a_k^2  (L_k \,  (u^{(k)+})^2-1) ) \ra_{\mathcal{H}}   \\
     &= \lim_{n \rightarrow \infty} Q_{h_n}(u^{(1)+}_n,  \ldots, u^{(k)+}_n)\leq 0 \\
     \la &(\omega,f_1, \ldots, f_k),  ( \sum_{i=1}^{k } a_i^2 L_i \,  \tau(u^{(i)+}) , \tfrac{\sigma^{(1)}_1}{2} a_1^2  (L_1 \,  (u^{(1)-})^2-1) , \ldots, \tfrac{\sigma^{(k)}_1}{2} a_k^2  (L_k \,  (u^{(k)-})^2-1) ) \ra_{\mathcal{H}}   \\
     &= \lim_{n \rightarrow \infty} Q_{h_n}(u^{(1)-}_n,  \ldots, u^{(k)-}_n)\geq 0. \\
\end{align*}
\end{proof}
Let $K$ be the convex hull in $\mathcal{H}$ of 
\begin{align*}
        \{ \, & ( \sum_{i=1}^{k }  a_i^2 L_i \,  \tau(u^{(i)}) , \tfrac{\sigma^{(1)}_1}{2} a_1^2  (L_1\,  (u^{(1)})^2-1) , \ldots, \tfrac{\sigma^{(k)}_1}{2} a_k^2  (L_k \,  (u^{(k)})^2-1) ) \; : \\
        & \qquad u^{(i)} \in V^{(i)}_{g_0}, \, \|u^{(i)}\|_{L^2(\Gamma_i)}=1, \, i=1, \ldots, k\},
\end{align*}
and let $K^G=\pi^G(K)$.
We claim that $(0,0, \ldots, 0) \in K^G$.
If $(0,0, \ldots, 0) \notin K^G$, then since $K^G$ lies in a convex cone  in a finite dimensional subspace, the Hahn-Banach theorem implies the existence of $(\omega,f_1, \ldots, f_k) \in \mathcal{H}^G$ such that
\[
    \la (\omega,f_1, \ldots, f_k), ( \sum_{i=1}^{k } a_i^2 L_i \,  \tau(u^{(i)}) , \tfrac{\sigma^{(1)}_1}{2} a_1^2  (L_1 \,  (u^{(1)})^2-1) , \ldots, \tfrac{\sigma^{(k)}_1}{2} a_k^2  (L_k \,  (u^{(k)})^2-1) ) \ra_{\mathcal{H}}<0
 \]
 for all $u^{(i)} \in V^{(i)}_{g_0}, \, \|u^{(i)}\|_{L^2(\Gamma_i)}=1, \, i=1, \ldots, k$, which contradicts Lemma \ref{lemma:indefinite}. 
 
Therefore, $(0,0, \ldots, 0) \in K^G$, and since $K^G$ is contained in a finite dimensional subspace, there exist eigenfunctions 
$u_1^{(i)}, \ldots, u_{n}^{(i)} \in V_{g_0}^{(i)}$, $i=1, \ldots, k$, and positive real numbers $t_1, \ldots t_n$ with $\sum_{j=1}^n t_j=1$ such that
\[
   (0, \ldots, 0) =  \sum_{j=1}^n t_j \, \pi^G
   \left( \sum_{i=1}^{k } a_i^2 L_i \,  \tau(u_j^{(i)}) , \tfrac{\sigma^{(1)}_1}{2} a_1^2  (L_1 \,  (u_j^{(1)})^2-1) , \ldots,   
                                                \tfrac{\sigma^{(k)}_1}{2} a_k^2  (L_k \,  (u_j^{(k)})^2-1) \right).
\]
Let $\tilde{u}^{(i)}_j:=\sqrt{t_jL_i}\, a_i \,u^{(i)}_j$, $i=1, \ldots, k$,  $j=1, \ldots n_i$. Then,
\[
    (0, a_1^2, \ldots, a_k^2) = \pi^G  \left(
     \sum_{j=1}^n\sum_{i=1}^k \tau(\tilde{u}_j^{(i)}), \sum_{j=1}^n(\tilde{u}_j^{(1)})^2 , \ldots, \sum_{j=1}^n(\tilde{u}_j^{(k)})^2 \right).
\]
Set $\tilde{u}^{(i)}:=(\tilde{u}^{(i)}_1, \ldots, \tilde{u}^{(i)}_n): M \rightarrow \mathbb{R}^{n}$ for $i=1, \ldots, k$, and define 
\[
    v=(v^{(1)}_1, \ldots, v^{(1)}_{|G|n}, \ldots \ldots , v^{(k)}_1, \ldots , v^{(k)}_{|G|n}): 
    M \rightarrow \mathbb{R}^{|G|kn}
\]
by
\[
    v(x)=\frac{1}{|G|} \left( \tilde{u}^{(1)}(\sigma_1 \cdot x), \ldots, \tilde{u}^{(1)}(\sigma_{|G|} \cdot x), \ldots \ldots ,
            \tilde{u}^{(k)}(\sigma_1 \cdot x), \ldots, \tilde{u}^{(k)}(\sigma_{|G|} \cdot x) \right),
\]
where $\sigma_j$ is any enumeration of the elements of $G$. Then,
\begin{align*}
     0 & = \sum_{j=1}^{|G|n}  \sum_{i=1}^k \tau(v^{(i)}_j)  
     = \sum_{j=1}^{|G|n}  \sum_{i=1}^k (dv^{(i)}_j \otimes dv^{(i)}_j  -\tfrac{1}{2} \|\n v^{(i)}_j \|^2 g_0) \
          \quad \mbox{ on } M \\
     a_i^2& = \sum_{j=1}^{|G|n}  (v^{(i)}_j)^2    \;\, \mbox{ on } \Gamma_i, \; i=1, \ldots, k
\end{align*}
and $v^{(i)}_j \in V^{(i)}_1$, for each $i$, $1 \leq i \leq k$, so 
 \begin{align*}
       \nu v^{(i)}_j&=\sigma^{(i)}_1v^{(i)}_j   \;  \mbox{  on } \Gamma_i \\
        \nu v^{(i)}_j&=0  \;  \mbox{  on } \Gamma_l \mbox{ for } l \ne i      
 \end{align*}
for $j=1, \ldots, |G|n$. 
Thus 
\[
     v:=(v^{(1)}_1, \ldots, v^{(1)}_{|G|n}, \ldots \ldots, v^{(k)}_1, \ldots, v^{(k)}_{|G|n}): 
     M  \rightarrow B^{|G|n}(a_1) \times \cdots \times B^{|G|n}(a_k)
\]
is a conformal free boundary minimal immersion with $v(\Gamma_i)  \subset \partial B^{|G|n}(a_i)$. 
Furthermore, 
\[
      \left\|\nu  v  \right\|= \sigma^{(i)}_1 \left( \sum_{j=1}^{|G|n} v^{(i)}_j\right)^{\frac{1}{2}} =\sigma_1^{(i)} a_i
      \qquad \mbox{on } \Gamma_i,
\]
and since $v$ is conformal, $\sigma^{(i)}_1 a_i =\lambda$, where $\lambda$  is the conformal factor of $v$, and so $g_0$ is $F$-homothetic to the induced metric on  $M$ from $v$.    Finally, $v$ is equivariant by construction.

Suppose $v^{(i)}_1, \ldots , v^{(i)}_{|G|n}$ are linearly  dependent and span a space of dimension $n_i < |G|n$. Then  
$v(\Gamma_i) \subset B^{|G|n}(a_1) \times \cdots \times ( \partial B^{|G|n}(a_i) \cap \mathcal{P}) \times \cdots \times B^{|G|n}(a_k)$, where $\mathcal{P}$ is a linear subspace of $\mathbb{R}^{|G|n}$ of dimension $n_i$. This implies that 
$v(\Sigma) \subset B^{|G|n}(a_1) \times \cdots \times (B^{|G|n}(a_i) \cap \mathcal{P}) \times \cdots \times B^{|G|n}(a_k)$.
Setting $v^{(i)}=(v^{(i)}_1, \ldots , v^{(i)}_{|G|n}) : \Sigma \rightarrow \mathbb{R}^{|Gn}$, we may choose an orthogonal linear  transformation $T$ of  $\mathbb{R}^{|G|n}$ such that the first $n_i$ component functions of $T \circ v^{(i)}$  are linearly independent, and the remaining components are zero.
Applying this to each $i$, $1 \leq i \leq k$, we obtain linearly independent eigenfunctions $w_1^{(i)}, \ldots, w_{n_i}^{(i)} \in V_{g_0}^{(i)}$ such  that
$w:=(w^{(1)}_1, \ldots, w^{(1)}_{n_1}, \ldots, w^{(k)}_1, \ldots, w^{(k)}_{n_k}): M  
\rightarrow B^{n_1}(a_1) \times \cdots \times B^{n_k}(a_k)$
 is a conformal free boundary minimal immersion.  
\end{proof}

\begin{theorem} \label{theorem:characterization-conformal}
Let $G$ be a finite group acting on the surface $M$ by diffeomorphisms.
Suppose there exists a $G$-invariant metric $g_0$ on $M$ such that $F(g_0) = \sup_g F(g)$, where the supremum is over all smooth $G$-metrics on $M$  in the conformal class of $g_0$. Then there exist independent first eigenfunctions $u^{(i)}_1, \ldots, u^{(i)}_{n_i} \in V_1^{(i)}$ for $i=1, \ldots, k$ and an orthogonal representation $\rho: G \rightarrow O(n_1) \times \cdots \times O(n_k)$ such that 
\[
     u:=(u^{(1)}_1, \ldots, u^{(1)}_{n_1}, \ldots, u^{(k)}_1, \ldots, u^{(k)}_{n_k}): M 
     \rightarrow B^{n_1}(a_1) \times \cdots \times B^{n_k}(a_k)
\]
is a $\rho$-equivariant free boundary harmonic map with $u(\Gamma_i)\subset \partial B^{n_i}(a_i)$, where $B^{n_i}(a_i)$ is  the ball of radius $a_i$ centered at the origin in $\mathbb{R}^{n_i}$,  and the Hopf differential of $u$ is real on $\p M$.
\end{theorem}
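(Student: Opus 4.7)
The plan is to follow the strategy of Theorem \ref{theorem:characterization}, restricting all metric variations to the conformal class of $g_0$. The key observation is that on a two-dimensional surface the stress-energy tensor $\tau(u)=du\otimes du-\tfrac{1}{2}\|\nabla u\|^{2}g$ is traceless, so $\langle\tau(u),\phi g\rangle_g=\phi\operatorname{tr}_g\tau(u)=0$ for any function $\phi$. Consequently, if $g(t)=e^{2t\phi}g_0$ is a $G$-invariant conformal deformation with initial velocity $h=2\phi g_0$, the bulk contribution $-\int_M\langle\sum_i a_i^{2}L_i\,\tau(u^{(i)}),h\rangle\,dA$ in the formula for $Q_h$ from the proof of Theorem \ref{theorem:characterization} vanishes identically, and the Lipschitz derivative computation used there yields
\[
\dot F=-\sum_{i=1}^{k}\int_{\Gamma_i}\sigma_1^{(i)}a_i^{2}\bigl(L_i(u^{(i)})^{2}-1\bigr)\phi\,ds
\]
at any point of differentiability, for any first eigenfunctions $u^{(i)}\in V_1^{(i)}$ with $\|u^{(i)}\|_{L^2(\Gamma_i)}=1$.

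I would then repeat the Hahn--Banach argument of Lemma \ref{lemma:indefinite} and Theorem \ref{theorem:characterization} in the smaller Hilbert space $\mathcal H'=L^2(\Gamma_1)\times\cdots\times L^2(\Gamma_k)$ and its $G$-invariant subspace $(\mathcal H')^G$. The approximation step is simpler since the test object is only a $k$-tuple of boundary functions: any $(f_1,\ldots,f_k)\in(\mathcal H')^G$ is approximated in $L^2$ by boundary restrictions of smooth $G$-invariant functions $\phi_n$ on $M$, and the left/right difference quotients along $g_n(t)=e^{2t\phi_n}g_0$ produce eigenfunctions with non-negative and non-positive pairings as in Lemma \ref{lemma:indefinite}. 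This places $0$ in the convex hull $K^G$ of the $\pi^G$-images of the boundary vectors $\bigl(\tfrac{\sigma^{(i)}_1}{2}a_i^{2}(L_i(u^{(i)})^{2}-1)\bigr)_{i=1}^{k}$. A finite convex combination, the rescaling $\tilde u_j^{(i)}=\sqrt{t_j L_i}\,a_i u_j^{(i)}$, and the $G$-symmetrization and dimension reduction carried out at the end of the proof of Theorem \ref{theorem:characterization} then produce independent first eigenfunctions $u^{(i)}_1,\ldots,u^{(i)}_{n_i}\in V^{(i)}_1$ and an orthogonal representation $\rho$ such that $u=(u^{(i)}_j)$ is $\rho$-equivariant and sends $\Gamma_i$ into $\partial B^{n_i}(a_i)$, which also forces $\sigma_1^{(i)}=1/a_i$.

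Because each component is an $(SN_i)$-eigenfunction, $u$ is harmonic on $M$, and the boundary relations $D_\nu u^{(i)}=(1/a_i)u^{(i)}$, $D_\nu u^{(j)}=0$ on $\Gamma_i$ for $j\ne i$ are exactly the free boundary conditions for a harmonic map into $\prod_i B^{n_i}(a_i)$. The Hopf differential claim then follows from the free boundary condition alone: on $\Gamma_i$, differentiating $|u^{(i)}|^{2}=a_i^{2}$ along the unit tangent $T$ gives $\langle u^{(i)}_T,u^{(i)}\rangle=0$, hence
\[
\langle u_T,u_\nu\rangle\bigr|_{\Gamma_i}=\sum_j\langle u^{(j)}_T,u^{(j)}_\nu\rangle=\tfrac{1}{a_i}\langle u^{(i)}_T,u^{(i)}\rangle=0,
\]
which is precisely the vanishing of the imaginary part of $\langle u_z,u_z\rangle\,dz^{2}$ on $\partial M$. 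The main contrast with Theorem \ref{theorem:characterization} is the loss of the bulk identity $\sum_i a_i^{2}L_i\,\tau(u^{(i)})=0$, which there forced conformality of the minimal immersion; without it, $u$ is in general only a free boundary harmonic map. The principal technical point I anticipate is verifying the conformal analogue of the Lipschitz regularity and derivative computation of \cite[Lemma 5.1]{FS2}, but this is a direct specialization of what is already used in Theorem \ref{theorem:characterization}.
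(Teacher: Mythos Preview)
Your proposal is correct and follows essentially the same route as the paper: restrict the variational argument of Theorem~\ref{theorem:characterization} to conformal deformations, use tracelessness of $\tau(u)$ in dimension two to kill the bulk term, run the Hahn--Banach/convex-hull argument in $L^2(\Gamma_1)^G\times\cdots\times L^2(\Gamma_k)^G$, then symmetrize and reduce dimension exactly as before; your verification that the Hopf differential is real via $\langle u_T,u_\nu\rangle=0$ is the same computation the paper records. One small correction: the assertion ``which also forces $\sigma_1^{(i)}=1/a_i$'' is not right and not needed---the free boundary condition only requires $D_\nu u^{(i)}$ to be parallel to $u^{(i)}$ on $\Gamma_i$, which $D_\nu u^{(i)}=\sigma_1^{(i)}u^{(i)}$ already gives, and nothing in the conformal-class argument pins down the value of $\sigma_1^{(i)}$.
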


The proof is analogous to \cite[Proposition 2.8]{FS3}.

\begin{proof}
Consider a smooth path $g(t)=\lambda(t)g_0$ of $G$-invariant metrics conformal to $g_0$, where $\lambda(t) \in C^\infty(M)$  are positive functions with $\lambda(0) \equiv 1$, so $g(0)=g_0$. Then $\sigma^{(i)}_1(t)$ is a Lipschitz function of $t$, and if $\dot{\sigma}_1^{(i)}(t)$ exists, 
\begin{align*}
       \dot{\sigma}_1^{(i)}(t) &= -\int_M \la \tau(u) , \dot{\lambda}(t) g_0 \ra \; dA_t
         -\tfrac{\sigma_1^{(i)}(t)}{2} \int_{\Gamma_i} u^2 \dot{\lambda}(t) g_0(T,T) \; ds_t  \\
         & =-\tfrac{\sigma_1^{(i)}(t)}{2} \int_{\Gamma_i} u^2 \dot{\lambda}(t) \; ds_0.
\end{align*}
As in the proof of Lemma \ref{lemma:indefinite}, for any $(f_1, \ldots f_k) \in L^2(\Gamma_1)^G \times \cdots \times L^2(\Gamma_k)^G$ there exist $u^{(i)} \in V^{(i)}_{g_0}$ with $\|u^{(i)}\|_{L^2(\Gamma_i)}=1$, $i=1, \ldots, k$, such that 
\[
   \la (f_1, \ldots, f_k), (
   \tfrac{\sigma^{(1)}_1}{2} a_1^2  (L_1\,  (u^{(1)})^2-1) , \ldots, \tfrac{\sigma^{(k)}_1}{2} a_k^2  (L_k \,  (u^{(k)})^2-1) )   
   \ra_{{L}^2}=0.
\]
Let $K$ be the convex hull in $L^2(\Gamma_1) \times \cdots \times L^2(\Gamma_k)$ of 
\[
        \{ \, ( \tfrac{\sigma^{(1)}_1}{2} a_1^2  (L_1\,  (u^{(1)})^2-1) , \ldots, \tfrac{\sigma^{(k)}_1}{2} a_k^2  (L_k \,  (u^{(k)})^2-1) ) \, : 
         u^{(i)} \in V^{(i)}_{g_0}, \, \|u^{(i)}\|_{L^2(\Gamma_i)}=1, \, i=1, \ldots, k\},
\]
and let $K^G=\pi^G(K)$, where $\pi^G: L^2(\Gamma_1) \times \cdots \times L^2(\Gamma_k) \rightarrow L^2(\Gamma_1)^G \times \cdots \times L^2(\Gamma_k)^G$ denotes the orthogonal projection.
Arguing as in the proof of Theorem \ref{theorem:characterization} we conclude that $(0, \ldots, 0) \in K^G$, and there exist eigenfunctions 
$u_1^{(i)}, \ldots, u_{n}^{(i)} \in V_{g_0}^{(i)}$, $i=1, \ldots, k$, such that
\[
    (a_1^2, \ldots, a_k^2) = \pi^G  \left(
     \sum_{j=1}^n({u}_j^{(1)})^2 , \ldots, \sum_{j=1}^n({u}_j^{(k)})^2 \right).
\]
Set ${u}^{(i)}:=({u}^{(i)}_1, \ldots, {u}^{(i)}_n): M \rightarrow \mathbb{R}^{n}$ for $i=1, \ldots, k$, and define 
\[
    v=(v^{(1)}_1, \ldots, v^{(1)}_{|G|n}, \ldots \ldots , v^{(k)}_1, \ldots , v^{(k)}_{|G|n}): 
    M \rightarrow \mathbb{R}^{|G|kn}
\]
by
\[
    v(x)=\frac{1}{|G|} \left( {u}^{(1)}(\sigma_1 \cdot x), \ldots, {u}^{(1)}(\sigma_{|G|} \cdot x), \ldots \ldots ,
            {u}^{(k)}(\sigma_1 \cdot x), \ldots, {u}^{(k)}(\sigma_{|G|} \cdot x) \right),
\]
where $\sigma_j$ is any enumeration of the elements of $G$. Then,
\[
     a_i^2 = \sum_{j=1}^{|G|n}  (v^{(i)}_j)^2    \;\, \mbox{ on } \Gamma_i, \; i=1, \ldots, k
\]
and $v^{(i)}_j \in V^{(i)}_1$, for each $i$, $1 \leq i \leq k$, so 
\begin{align*}
        \nu v^{(i)}_j &=\sigma^{(i)}_1v^{(i)}_j   \;  \mbox{  on } \Gamma_i \\
        \nu v^{(i)}_j &=0  \;  \mbox{  on } \Gamma_k \mbox{ for } k \ne i      
\end{align*}
for $j=1, \ldots, n$. 
Thus 
$v:=(v^{(1)}_1, \ldots, v^{(1)}_{|G|n}, \ldots, v^{(k)}_1, \ldots, v^{(k)}_{|G|n}): M  
\rightarrow B^{|G|n}(a_1) \times \cdots \times B^{|G|n}(a_k)$ is an equivariant  free boundary harmonic map with $u(\Gamma_i)\subset \partial B^{|G|n}(a_i)$. 
Furthermore,  on  $\Gamma_i$
\[
      \nu v= (0, \ldots, 0, \sigma^{(i)}_1 v^{(i)}_1, \ldots, \sigma^{(i)}_1 v^{(i)}_{|G|n}, 0, \ldots, 0)
\]
which is orthogonal to $v_T$ on $\Gamma_i$, and so
the Hopf differential of $v$ is real on the boundary of $M$.

As in the proof of \ref{theorem:characterization}, we may arrange  the linear independence of the eigenfunctions as in the  statement of the theorem.
\end{proof}

Let $\sigma^*(\gamma, b)$ denote the supremum of the  first normalized Steklov $\sigma_1L$ taken over all smooth metrics on the surface of genus $\gamma$ with $b\geq  1$ boundary components.
\begin{proposition} \label{proposition:SN-supremum}
Suppose $M$ has genus $\gamma$  and  let  $b_i$ denote the  number of  connected components of $\Gamma_i$.  Then
\[
      \sup_g \sigma_1^{(i)}(g) L_g(\Gamma_i) = \sigma^*(\gamma, b_i)
\]
where the supremum  is taken over all smooth metrics $g$ on $M$, and the supremum is not achieved if there are Neumann boundary components ($k >1$).
\end{proposition}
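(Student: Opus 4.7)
The plan is to compare the mixed Steklov-Neumann problem on $(M,g)$ with the pure Steklov problem on the surface $\hat M$ of genus $\gamma$ with $b_i$ boundary components obtained by gluing a smooth disk onto each Neumann component $\Gamma_j$, $j\ne i$. For the upper bound I would start from a smooth $g$ on $M$, extend its conformal class to a conformal structure on $\hat M$, and choose a smooth representative $\hat g$ with $\hat g|_{\Gamma_i}=g|_{\Gamma_i}$; two-dimensional conformal invariance of the Dirichlet integral, together with $L_{\hat g}(\Gamma_i)=L_g(\Gamma_i)$, then gives $\sigma_1^{(i)}(g)L_g(\Gamma_i)=\sigma_1^{(i)}(\hat g|_M)L_{\hat g|_M}(\Gamma_i)$. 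For any $v\in W^{1,2}(\hat M)$ with $\int_{\Gamma_i}v\,ds=0$, the restriction $v|_M$ is admissible for $(SN_i)$ on $M$ and $\int_M\|\nabla v\|^2\le\int_{\hat M}\|\nabla v\|^2$, so taking the infimum of Rayleigh quotients yields $\sigma_1^{(i)}(M,\hat g|_M)\le\sigma_1^{\mathrm{Steklov}}(\hat M,\hat g)$, and hence
\[
\sigma_1^{(i)}(g)L_g(\Gamma_i)\le\sigma_1^{\mathrm{Steklov}}(\hat g)L_{\hat g}(\partial\hat M)\le\sigma^*(\gamma,b_i).
\]

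For the lower bound I would fix $\varepsilon>0$ and a smooth metric $\tilde g$ on a surface $\tilde M$ of genus $\gamma$ with $b_i$ boundary components such that $\sigma_1^{\mathrm{Steklov}}(\tilde g)L_{\tilde g}(\partial\tilde M)\ge\sigma^*(\gamma,b_i)-\varepsilon$, then remove $b-b_i$ disjoint geodesic disks $D_\delta(p_j)$ of radius $\delta$ from the interior of $\tilde M$ to produce $M_\delta$ diffeomorphic to $M$, with induced metric $g_\delta=\tilde g|_{M_\delta}$ and $L_{g_\delta}(\Gamma_i)=L_{\tilde g}(\partial\tilde M)$. The upper-bound step gives $\sigma_1^{(i)}(g_\delta)\le\sigma_1^{\mathrm{Steklov}}(\tilde g)$. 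For the matching lower inequality, I would take a first $(SN_i)$-eigenfunction $u_\delta$ with $\|u_\delta\|_{L^2(\Gamma_i)}=1$ and $\int_{\Gamma_i}u_\delta=0$, so $\int_{M_\delta}\|\nabla u_\delta\|^2=\sigma_1^{(i)}(g_\delta)$ is uniformly bounded. The Neumann condition on each $\partial D_\delta(p_j)$ is exactly what permits Schwarz reflection (inversion across that circle in local isothermal coordinates) to extend $u_\delta$ harmonically into an annular neighborhood of $p_j$; standard Moser and elliptic boundary estimates applied to the reflected extension then yield a uniform $L^\infty$ bound. Passing to a subsequence, weak $W^{1,2}$ compactness and strong $L^2$-trace compactness produce a nonzero limit $v$, harmonic on $\tilde M\setminus\{p_1,\dots,p_{b-b_i}\}$, with $\partial_\nu v=\sigma_\infty v$ on $\Gamma_i$, $\int_{\Gamma_i}v=0$, and $\|v\|_{L^2(\Gamma_i)}=1$, where $\sigma_\infty=\lim_{\delta\to 0}\sigma_1^{(i)}(g_\delta)$. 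Since $v$ is uniformly bounded, the two-dimensional removable singularity theorem extends $v$ to a harmonic function on all of $\tilde M$, so $v$ is a Steklov eigenfunction on $(\tilde M,\tilde g)$, giving $\sigma_\infty\ge\sigma_1^{\mathrm{Steklov}}(\tilde g)$. Multiplying by $L_{\tilde g}(\partial\tilde M)$ and letting $\varepsilon\to 0$ completes the lower bound.

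For non-attainment when $k>1$, suppose a smooth $g$ on $M$ realizes $\sigma_1^{(i)}(g)L_g(\Gamma_i)=\sigma^*(\gamma,b_i)$; the upper-bound chain then forces $\sigma_1^{(i)}(M,\hat g|_M)=\sigma_1^{\mathrm{Steklov}}(\hat M,\hat g)$. Any first Steklov eigenfunction $v$ of $(\hat M,\hat g)$ satisfies $\int_{\Gamma_i}v=0$, so $v|_M$ is admissible for $(SN_i)$; since the two Rayleigh quotients share the denominator $\int_{\Gamma_i}v^2$, the forced equality of infima gives $\int_M\|\nabla v\|^2=\int_{\hat M}\|\nabla v\|^2$, hence $v$ is constant on every glued cap. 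But $v$ is harmonic on the connected surface $\hat M$, so unique continuation forces $v$ to be globally constant; combined with $\int_{\Gamma_i}v=0$ this gives $v\equiv 0$, contradicting that $v$ is a nontrivial eigenfunction. Hence the supremum is never attained when $k>1$. The main technical hurdle is the removable-singularity step in the lower bound: it is the compatibility of the Neumann condition with Schwarz reflection (producing a genuine harmonic extension rather than merely a $W^{1,2}$ extension) that secures the uniform $L^\infty$ bound and the passage to a nonzero harmonic limit.
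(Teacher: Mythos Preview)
Your argument is correct and follows essentially the same strategy as the paper: domain monotonicity of the Rayleigh quotient against the capped surface for the upper bound, convergence of $\sigma_1^{(i)}$ to the pure Steklov eigenvalue as the Neumann holes shrink for the lower bound, and a unique-continuation argument for non-attainment. The paper packages the last two steps a bit differently---it cites \cite{FS4} for the convergence rather than giving your reflection/removable-singularity argument, and for non-attainment it compares $M$ with an intermediate domain $M_i^r$ of the \emph{same} topology (Neumann circles shrunk to radius $r<1$), so that strict monotonicity directly contradicts maximality on $M$ without passing through $\hat M$---but the underlying ideas coincide. One small technical point: Schwarz reflection across $\partial D_\delta(p_j)$ requires that circle to be round in the conformal chart, which a geodesic circle need not be; simply remove round disks in a fixed isothermal coordinate at each $p_j$ instead (the surface is still diffeomorphic to $M$ and $L_{g_\delta}(\Gamma_i)$ is unchanged), and your argument goes through verbatim.
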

\begin{proof}
Let $g$ be a Riemannian metric on $M$.
First observe that the Steklov-Neumann eigenvalue $\sigma^{(i)}_1(g)$ depends only on the metric $g$ on the Steklov boundary $\Gamma_i$ and the conformal structure of the surface $(M,g)$. Any Riemann surface  is conformally equivalent to a closed Riemann surface $\bar{M}$ with finitely many disks removed. Let $M_i$ be the surface of genus $\gamma$ with $b_i$ boundary components gotten by removing the disks corresponding to the Steklov boundary components from the closed Riemann surface $\bar{M}$. We may choose a local complex coordinate $z$ on any of the Neumann disks so that the disk is the unit disk $|z| \leq 1$. Given $0 < r < 1$, let $M_i^r$ be the surface gotten by removing the disk $|z| <r$ in each Neumann disk from the surface $M_i$. Then $M \subset M_i^r \subset M_i$, and 
\begin{equation} \label{equation:monotone} 
     \sigma^{(i)}_1(g) = \inf_{\substack{u \in W^{1,2}(M) \\ \int_{\Gamma_i} u=0}}
       \frac{\int_M \|\nabla u\|^2 \,da}{\int_{\Gamma_i} u^2 \,ds} 
     \leq  \inf_{\substack{u \in W^{1,2}(M_i^r) \\ \int_{\Gamma_i} u=0}}
       \frac{\int_{M_i^r} \|\nabla u\|^2 \,da}{\int_{\Gamma_i} u^2 \,ds}  
     \leq \inf_{\substack{u \in W^{1,2}(M_i) \\ \int_{\Gamma_i} u=0}}
       \frac{\int_{M_i} \|\nabla u\|^2 \,da}{\int_{\Gamma_i} u^2 \,ds}   
\end{equation}
and it follows that $\sigma_1^{(i)}(g) L_g(\Gamma_i) \leq \sigma^*(\gamma, b_i)$. 

Moreover, if $u_r$ is a first Steklov-Neumann eigenfunction of $M_i^r$, then there exists a sequence $r_l \rightarrow 0$ such that $u_{r_l}$ converges as $l \rightarrow \infty$ in $C^2$ on compact subsets of $M_i$ away from the centers of the Neumann disks to a first Steklov eigenfunction $u$ of $M_i$, and $\sigma_1(M_i) = \lim_{r \rightarrow 0} \sigma_1^{(i)} (M_i^r)$ (see for example the proof of \cite[Theorem 1.2]{FS4} for further details). In particular, if we choose a Riemannian surface of genus $\gamma$ with $b_i$ boundary components with first normalized Steklov eigenvalue $\sigma_1L$ close to $\sigma^*(\gamma,b_i)$, then by removing $\sum_{j\neq i} b_j$ sufficiently small disjoint disks from the surface, we may obtain a surface with first normalized Steklov-Neumann eigenvalue $\sigma^{(i)}_1L$ arbitrarily close to $\sigma^*(\gamma,b_i)$. Therefore, $\sup_g \sigma_1^{(i)}(g) L_g(\Gamma_i) = \sigma^*(\gamma, b_i)$.

Suppose $k>1$ and there existed a metric $g$ that achieved the supremum of $\sigma_i^{(i)}L$ over all smooth metrics on $M$. Since $k>1$, the first inequality in (\ref{equation:monotone}) is strict for $0<r<1$, and we have
$\sigma_1^{(i)}(g) L_g(\Gamma_i) < \sigma_1^{(i)}(M_i^r) L_g(\Gamma_i)$, a contradiction. Therefore the supremum is not achieved.
\end{proof}
\begin{proposition} \label{proposition:F-supremum}
Suppose $M$ has genus $0$  and  let  $b_i$ denote the  number of  connected components of $\Gamma_i$.  Then
\[
      \sup_g F(g) =   \sum_{i=1}^k a_i^2 \, \sigma^* (0,b_i)
\]
where the supremum  is taken over all smooth metrics $g$ on $M$, and the supremum is not achieved if $k >1$.
\end{proposition}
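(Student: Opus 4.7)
My plan has three parts: a trivial upper bound from Proposition~\ref{proposition:SN-supremum}, a construction of a near-maximizing sequence via gluing, and a quick non-achievement argument by contradiction.

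First, the \emph{upper bound} is immediate. By Proposition~\ref{proposition:SN-supremum}, $\sigma_1^{(i)}(g)L_g(\Gamma_i)\le\sigma^*(0,b_i)$ for every smooth metric $g$ on $M$, so summing over $i$ gives $F(g)\le\sum_{i=1}^k a_i^2\sigma^*(0,b_i)$.

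For the reverse inequality, I would construct a sequence of metrics $g_\varepsilon$ on $M$ with $F(g_\varepsilon)\to\sum_i a_i^2\sigma^*(0,b_i)$ by a \emph{gluing degeneration} that realizes all $k$ individual suprema simultaneously. For each $i$, pick a surface $(N_i,h_i)$ of genus $0$ with $b_i$ boundary components and $\sigma_1(h_i)L_{h_i}(\partial N_i)>\sigma^*(0,b_i)-\varepsilon$. Fix interior points $p_i\in N_i$, remove a disk of radius $\varepsilon$ about each $p_i$, and glue the $k$ punctured pieces together in a tree pattern via $k-1$ thin cylindrical necks. An Euler-characteristic count ($\chi=\sum_i(2-b_i)-2(k-1)=2-b$) shows the resulting surface is diffeomorphic to $M$ with boundary identification $\Gamma_i\leftrightarrow\partial N_i$, and since the metric on each $\Gamma_i$ is untouched we have $L_{g_\varepsilon}(\Gamma_i)=L_{h_i}(\partial N_i)$.

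The main obstacle is the spectral convergence
\[
    \sigma_1^{(i)}(g_\varepsilon)\longrightarrow\sigma_1(h_i),\qquad i=1,\ldots,k,
\]
as the necks shrink, and this is where the bulk of the work lies. The direction $\sigma_1^{(i)}(g_\varepsilon)\le\sigma_1(h_i)+o(1)$ I would obtain by extending the first Steklov eigenfunction of $(N_i,h_i)$ to a global test function on $M$ via harmonic extension across the thin necks; the capacity of a thin neck is small, so the extra Dirichlet energy is $o(1)$ while the denominator $\int_{\Gamma_i}u^2$ is unchanged. The matching lower bound is the heart of the matter: normalize first Steklov-Neumann eigenfunctions $u_\varepsilon$ on $(M,g_\varepsilon)$ by $\int_{\Gamma_i}u_\varepsilon^2=1$, use the Dirichlet energy bound $\int_M\|\nabla u_\varepsilon\|^2=\sigma_1^{(i)}(g_\varepsilon)$ together with elliptic boundary estimates to extract a $C^2_{\mathrm{loc}}$ subsequential limit $u_0$ on $N_i\setminus\{p_i\}$, and apply a capacity/removable-singularity argument at the collapsing neck to show that $u_0$ extends to a function on $N_i$ that is a first Steklov eigenfunction with Rayleigh quotient $\sigma_1(h_i)$. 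This is a direct adaptation of the limiting argument already used in Proposition~\ref{proposition:SN-supremum} and in \cite[Theorem~1.2]{FS4}. Combining the two bounds and letting $\varepsilon\to 0$ yields $\sup_g F(g)\ge\sum_i a_i^2\sigma^*(0,b_i)$, establishing equality.

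Finally, non-achievement when $k>1$ is a clean corollary. If some smooth metric $g_0$ attained $F(g_0)=\sup F$, then since each summand satisfies $a_i^2\sigma_1^{(i)}(g_0)L_{g_0}(\Gamma_i)\le a_i^2\sigma^*(0,b_i)$ and their sum equals the assumed supremum, each individual term would have to attain its maximum. But the last clause of Proposition~\ref{proposition:SN-supremum} forbids exactly this for $k>1$, a contradiction.
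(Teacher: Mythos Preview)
Your proposal is correct and follows essentially the same strategy as the paper: the upper bound via Proposition~\ref{proposition:SN-supremum}, the lower bound by gluing near-optimal genus-$0$ pieces $(N_i,h_i)$ along thin cylindrical necks and invoking the spectral convergence argument from \cite[Theorem~1.2]{FS4}, and non-achievement by forcing equality in each summand and citing the last clause of Proposition~\ref{proposition:SN-supremum}. Your write-up is in fact more detailed than the paper's (e.g.\ the Euler-characteristic check and the two-sided spectral convergence sketch), though note a small imprecision: in a tree with $k-1$ necks some pieces must have more than one disk removed, not just one at a single $p_i$.
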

\begin{proof}
By Proposition \ref{proposition:SN-supremum} we have $\sup_g F(g) \leq   \sum_{i=1}^k a_i^2 \, \sigma^* (0,b_i)$.
Let $M_{i}$ be a Riemannian surface of genus $0$ with $b_j$ boundary components $\Gamma_i$ such that $\sigma_{1}(M_i)L(\Gamma_i)$ is arbitrarily close to $\sigma^*_{1}(0,b_i)$. We may glue the surfaces $M_{1}, \ldots, M_{k}$ together using cylindrical necks between interior points to obtain a Riemannian surface $M$ of genus $0$ with $b_1 + \cdots + b_k$ boundary components, and such that $\sigma^{(i)}_1(M)L(\Gamma_i)$ (the first normalized eigenvalue of the Steklov-Neumann problem which is Steklov on $\Gamma_i$ and Neumann on the other boundary components)  is  arbitrarily close to $\sigma^*(0,b_i)$, and  therefore  $F(M)$ is arbitrarily close to $\sum_{i=1}^k a_i^2 \, \sigma^* (0,b_i)$ (the details are as in  the the proof of \cite[Theorem 1.2]{FS4}).

Suppose there existed a metric $g$ on $M$ that achieved the supremum. Then 
$\sigma_1^{(i)}(g)L_g(\Gamma_i) =\sigma^* (0,b_i)$, a contradiction since by  Proposition \ref{proposition:SN-supremum} the supremum  is not achieved.
\end{proof}
For example, consider a surface of genus zero with six boundary components divided into three pairs of boundary components $\Gamma_1, \, \Gamma_2, \, \Gamma_3$. By Proposition \ref{proposition:F-supremum} there is no metric on $M$  that achieves the supremum of $a_1^2\sigma_1^{(1)}(g)L_g(\Gamma_1)+a_2^2\sigma_1^{(2)}(g)L_g(\Gamma_2)+a_3^2\sigma_1^{(3)}(g)L_g(\Gamma_3)$ over all smooth metrics on $M$, but the supremum is achieved in the limit by a sequence of metrics degenerating to the disjoint union of three identical copies of the critical catenoid.

Using similar arguments to the proof of Proposition \ref{proposition:F-supremum} we have the following.
\begin{proposition}
Suppose $M$ has genus $\gamma$  and  let  $b_i$ denote the  number of  connected components of $\Gamma_i$.  Then
\[
      \sup_g F(g) \geq\max_{\substack{\gamma_1+ \cdots + \gamma_k=\gamma \\ 
                     \gamma_j \geq 0 \; \forall \,j 
                    }}
      \; \sum_{i=1}^k a_i^2 \, \sigma^* (\gamma_i,b_i)
\]
where the supremum  is taken over all smooth metrics $g$ on $M$. 
\end{proposition}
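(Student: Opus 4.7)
The plan is to mimic the construction in the proof of Proposition \ref{proposition:F-supremum}, but now allowing the topology of the surface to be distributed among the $k$ pieces that we glue together. Fix any partition $\gamma=\gamma_1+\cdots+\gamma_k$ with $\gamma_j\geq 0$. The idea is that a lower bound of the form $\sum_i a_i^2\,\sigma^*(\gamma_i,b_i)$ will be achieved in the limit of a sequence of metrics on $M$ which degenerate into the disjoint union of $k$ model surfaces $M_1,\ldots,M_k$, where $M_i$ has genus $\gamma_i$, has exactly $b_i$ boundary components constituting $\Gamma_i$, and almost realizes the Steklov extremal $\sigma^*(\gamma_i,b_i)$.

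First I would fix, for each $\epsilon>0$ and each $i$, a Riemannian surface $(M_i,g_i)$ of genus $\gamma_i$ with $b_i$ boundary components (which will play the role of $\Gamma_i$) such that the pure Steklov eigenvalue satisfies $\sigma_1(g_i)L_{g_i}(\partial M_i)>\sigma^*(\gamma_i,b_i)-\epsilon$. Next I would assemble these $k$ pieces into a connected genus-$\gamma$ surface diffeomorphic to $M$ by gluing them in a tree pattern: for $i=1,\ldots,k-1$, excise a small interior disk from $M_i$ and one from $M_{i+1}$ and attach a thin cylindrical neck. Because the necks are attached at interior points and the gluing pattern is a tree, the resulting connected surface has genus $\gamma_1+\cdots+\gamma_k=\gamma$ and boundary components $\Gamma_1\sqcup\cdots\sqcup\Gamma_k$, so it is diffeomorphic to $M$ with the prescribed boundary decomposition; call the resulting metric $g_\delta$, where $\delta$ is the neck size.

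The core analytic step is to show that as $\delta\to 0$, the mixed Steklov-Neumann eigenvalue $\sigma_1^{(i)}(g_\delta)L_{g_\delta}(\Gamma_i)$ converges to $\sigma_1(g_i)L_{g_i}(\partial M_i)$ for each $i$. One direction (the upper bound) follows from the monotonicity argument already used in Proposition \ref{proposition:SN-supremum}: if one enlarges the surface by filling in the Neumann disks on the other $M_j$ pieces with Neumann caps, $\sigma^{(i)}_1$ can only decrease, and in the limit $\delta\to 0$ the contributions of the $M_j$, $j\neq i$, to the Rayleigh quotient of a minimizing sequence become negligible because the necks carry vanishing capacity. For the lower bound, given a first $(SN_i)$ eigenfunction $u_\delta$ on $(M,g_\delta)$ with $\|u_\delta\|_{L^2(\Gamma_i)}=1$, elliptic estimates and the smallness of the necks (as in the proof of \cite[Theorem 1.2]{FS4}) allow one to extract a subsequential $C^2_{\mathrm{loc}}$-limit $u$ on $M_i$ away from the puncture, which is a Steklov eigenfunction on $(M_i,g_i)$ and satisfies $\sigma_1(g_i)\leq\lim\inf_\delta\sigma_1^{(i)}(g_\delta)$. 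The two directions combine to give the claimed limit.

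Granting this neck-degeneration convergence, we conclude that
\[
F(g_\delta)=\sum_{i=1}^k a_i^2\,\sigma_1^{(i)}(g_\delta)L_{g_\delta}(\Gamma_i)\longrightarrow \sum_{i=1}^k a_i^2\,\sigma_1(g_i)L_{g_i}(\partial M_i)>\sum_{i=1}^k a_i^2\,\sigma^*(\gamma_i,b_i)-\epsilon\sum_i a_i^2
\]
as $\delta\to 0$, so $\sup_g F(g)\geq\sum_{i=1}^k a_i^2\,\sigma^*(\gamma_i,b_i)$ for this particular partition, and then taking the maximum over admissible partitions of $\gamma$ yields the stated inequality. The principal obstacle is the eigenvalue-convergence step under neck degeneration: one must verify both that the necks do not support low-energy test functions that would pull $\sigma_1^{(i)}$ down, and that the Neumann boundary on the other pieces does not spuriously raise it, but this is precisely the degeneration analysis carried out in \cite{FS4} and the argument transports verbatim to the Steklov-Neumann setting.
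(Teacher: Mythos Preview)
Your proposal is correct and follows essentially the same route as the paper. In fact, the paper does not write out a proof of this proposition at all: it simply states that the result follows by ``similar arguments to the proof of Proposition~\ref{proposition:F-supremum}'', and your construction (near-extremal pieces $M_i$ of genus $\gamma_i$ with $b_i$ boundary components, glued along thin interior necks in a tree pattern, with the eigenvalue convergence under neck degeneration supplied by the analysis in \cite{FS4}) is exactly the natural generalization of that argument to arbitrary genus partitions. Your write-up actually supplies more detail than the paper does; the only superfluous part is the upper-bound direction of the convergence $\sigma_1^{(i)}(g_\delta)L_{g_\delta}(\Gamma_i)\to\sigma_1(g_i)L_{g_i}(\partial M_i)$, since for the stated inequality you only need the $\liminf$ direction.
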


\section{The canonical model and the moduli space} \label{section:canonical-model}

We now restrict ourselves to a genus $0$ Riemann surface $M$ with $6$ boundary components. By a classical theorem of
Koebe $M$ is conformally diffeomorphic to an open subset of $\mathbb S^2$ with circular boundary components. We have seen that in order
to maximize our eigenvalue problem we must impose enough symmetry. We have the following lemma.
\begin{lemma} Any conformal automorphism $\rho$ of $M$ (orientation preserving or reversing) is the restriction of a conformal automorphism of
$\mathbb S^2$. Furthermore if $\rho$ preserves each boundary component of $M$ then $\rho$ is the identity.
\end{lemma}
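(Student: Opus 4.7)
My plan is to extend $\rho$ globally to $\mathbb S^2$ using the analyticity of the six circular boundary components, identify the extension with a M\"obius (or anti-M\"obius) transformation by matching three boundary points, and then, under the assumption that each boundary component is preserved, use hyperbolic rigidity on a complementary disk to force this transformation to be the identity.

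For the first assertion, I would begin by invoking Carath\'eodory's extension theorem: each boundary component $C_i$ is a round circle, hence an analytic Jordan curve, so $\rho$ extends continuously to $\bar M$ and permutes the six circles, say $\rho(C_i)=C_{\sigma(i)}$ for some permutation $\sigma$. Because each $C_i$ is analytic, the Schwarz reflection principle extends $\rho$ to a holomorphic (or anti-holomorphic, in the orientation-reversing case) map on an open neighborhood of each $C_i$ in $\mathbb S^2$. I would then choose three distinct points $p_1,p_2,p_3\in C_1$ and let $T$ be the unique M\"obius (or anti-M\"obius) transformation of $\mathbb S^2$ sending $p_j$ to $\rho(p_j)$; since both $T$ and $\rho$ map $C_1$ to $C_{\sigma(1)}$ and agree at three points, they agree on all of $C_1$. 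The composition $T^{-1}\circ\rho$ is then holomorphic in a neighborhood of $C_1$ and equal to the identity on $C_1$, so the identity principle gives $T=\rho$ locally near $C_1$, and hence on the connected domain $M$ by analytic continuation.

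For the second assertion, with $\rho$ identified with a M\"obius transformation $T$ of $\mathbb S^2$, the hypothesis $\rho(C_i)=C_i$ combined with $T(M)=M$ forces $T$ to preserve each complementary disk $D_i$ (the component of $\mathbb S^2\setminus M$ bounded by $C_i$). I would fix one such disk, say $D_1$, and view the opposite side $D_1^c$, which contains $M$, as the Poincar\'e disk model of $\mathbb H^2$. Then $T$ acts as an orientation-preserving hyperbolic isometry of $D_1^c$ that preserves each of the five compact hyperbolic disks $D_2,\ldots,D_6$, and must therefore fix their five distinct hyperbolic centers. An orientation-preserving isometry of $\mathbb H^2$ with two fixed points is the identity, forcing $T=\mathrm{id}$.

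The step I expect to require the most care is the Carath\'eodory and Schwarz reflection extension in the first part; this is classical for analytic Jordan boundaries, but depends essentially on the circular nature of $\partial M$, and one needs to check that the various local analytic extensions glue together coherently with the candidate M\"obius map. Once the global extension is in hand, both the three-point matching and the hyperbolic rigidity step are brief and formal.
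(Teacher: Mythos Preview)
Your argument for the second assertion---fixing the hyperbolic centers of the five remaining complementary disks inside the Poincar\'e model on the sixth---is a clean alternative to the paper's approach. The paper instead normalizes two of the boundary circles to be concentric, observes that $\rho$ then restricts to a conformal self-map of the annulus they bound and is therefore a rotation about the common axis, and notes that a nontrivial such rotation cannot preserve any of the remaining four boundary circles. Both arguments are really arguments for orientation-preserving $\rho$; in fact the second assertion as stated fails in the orientation-reversing case (six small disjoint disks all centered on a single great circle are each preserved by reflection in that great circle), but the paper only ever applies the lemma to the orientation-preserving compositions $\rho_j\rho_k\rho_j\rho_k$, so this does no harm.

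Your proof of the first assertion, however, has a genuine gap. The claim ``since both $T$ and $\rho$ map $C_1$ to $C_{\sigma(1)}$ and agree at three points, they agree on all of $C_1$'' is not justified: agreement at three points determines a map only \emph{among} M\"obius transformations, and at this stage you do not yet know that $\rho|_{C_1}$ is the restriction of one. After a single Schwarz reflection, $T^{-1}\circ\rho$ is merely an injective holomorphic map of an annular neighborhood of $C_1$ sending $C_1$ to itself and fixing three points, and there are many such maps besides the identity (for instance $e^{i\theta}\mapsto e^{i(\theta+\varepsilon\psi(\theta))}$ for small $\varepsilon$ and a real-analytic $\psi$ with three zeros on the circle, extended holomorphically off the circle). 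The paper simply cites the extension as a classical result of Koebe; the standard proof does not stop after one reflection but iterates the Schwarz reflections across all six circles and then across all of their successive reflected images, so that the map extends to the complement of the limit set of the reflection group generated by the six circles. That limit set is totally disconnected and removable, and only then does one obtain a global conformal automorphism of $\mathbb S^2$, which must be M\"obius.
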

\begin{proof} The first statement is well known and can be found in \cite{Ko}. Now if we assume that $\rho$ preserves each boundary component,
then we can apply a M\"obius transformation to place two of the boundary components $\gamma_1,\gamma_2$ symmetrically about the 
$xy$-plane and thus $\rho$ preserves the annulus bounded by $\gamma_1$ and $\gamma_2$. It follows that $\rho$ must be a rotation about
the $z$-axis. Thus $\rho$ cannot preserve any of the other boundary circles unless $\rho$ is the identity.
\end{proof}
We now assume that the $6$ boundary components are divided into $3$ groups $\Gamma_1,\Gamma_2,\Gamma_3$ with each group containing
a pair of boundary components; $\Gamma_j=\{\gamma^{(j)}_1,\gamma^{(j)}_2\}$ for $j=1,2,3$. We then assume that we have an orientation 
reversing conformal involution $\rho_j$ that interchanges the components in $\Gamma_j$ and preserves the other $4$ boundary components. Thus
we have $\rho_j^2=I$, the identity. Furthermore since the commutator $\rho_j\rho_k\rho_j\rho_k$ preserves all of the boundary components
it is the identity by the lemma, and it follows that $\rho_j$ commutes with $\rho_k$. Thus the conformal symmetry group of $M$ is the group 
$G=\mathbb Z_2\times \mathbb Z_2\times \mathbb Z_2$. 

By the previous discussion we may assume that $M$ is a subdomain in $\mathbb S^2$ with circular boundary components and that $G$ is
a subgroup of the M\"obius group of conformal transformations of $\mathbb S^2$. Since $G$ is compact it follows that $G$ is conjugate to a
subgroup of the orthogonal group. Thus after applying a conformal transformation we may assume that the elements of $G$ lie in the orthogonal
group. Now any orthogonal matrix $\rho$ with $\rho^2=I$ is conjugate in the orthogonal group to a diagonal matrix with $1$ and $-1$ on
the diagonal. Since the group $G$ is abelian it follows that there is a Euclidean coordinate system so that $G$ consists of the group of diagonal
matrices with $1$ and $-1$ on the diagonal. Thus the domain $M$ is bounded by circles centered on the coordinate axes and is invariant
under reflections in the coordinate planes. We refer to such a representation of $M$ as the {\it canonical model} and we assume that $\Gamma_1$
consists of the pair of boundary circles centered on the $x$-axis and $\rho_1$ is reflection in the $yz$-plane. Similarly $\Gamma_2$ consists of
the boundary circles centered on the $y$-axis and $\rho_2$ is reflection in the $xz$-plane. Finally $\Gamma_3$ consists of the boundary
circles centered on the $z$-axis and $\rho_3$ is reflection in the $xy$-plane. We may parametrize the moduli space of such canonical models by
the radii of the circles $r_1,r_2,r_3$ where the radii are restricted by the condition that the circles are disjoint in $\mathbb S^2$.

To summarize we have explicitly identified that moduli space $\mathcal M$ of the surfaces we are considering as a convex subset of $\mathbb R^3$
\[ \mathcal M=\{(r_1,r_2,r_3):\ r_i>0,\ r_i+r_j<\pi/2\ \forall{i,j}, i\ne j\}.
\]
Note that the inequality $r_i+r_j<\pi/2$ is the condition that the circles in $\Gamma_i$ are disjoint and bound disjoint disks in $\mathbb S^2$.

\section{Symmetries of eigenfunctions}

As in section \ref{section:canonical-model}, we  assume that $M$ is a Riemannian surface of genus 0 with 6 boundary components $\Gamma_1,\Gamma_2,\Gamma_3$ with each group containing a pair of boundary components; $\Gamma_i=\{\gamma^{(i)}_1,\gamma^{(i)}_2\}$ for $i=1,2,3$, and that there is an orientation 
reversing conformal involution $\rho_i$ that interchanges the components in $\Gamma_i$ and preserves the other $4$ boundary components.
Fix $i$ and let  $V^{(i)}$  denote the eigenspace corresponding to the first nonzero eigenvalue of $(SN_i)$. Then $V$ admits a direct sum decomposition,
\[
     V^{(i)}=V^{(i)}_{eee} \oplus V^{(i)}_{eeo} \oplus V^{(i)}_{eoe} \oplus V^{(i)}_{eoo} 
                \oplus V^{(i)}_{oee} \oplus V^{(i)}_{oeo} \oplus V^{(i)}_{ooe} \oplus V^{(i)}_{ooo} 
\]
where the three subscripts (e or o) denote that the eigenfunctions are even or odd under $\rho_1$,  $\rho_2$,  $\rho_3$ respectively.
\begin{proposition}  \label{proposition:even-odd}
For each $i=1,2,3$,  $V^{(i)}=V^{(i)}_{eeo} \oplus V^{(i)}_{eoe} \oplus V^{(i)}_{oee}$, and each summand has dimension at most 1.
\end{proposition}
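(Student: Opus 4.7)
The plan is to exploit the $G$-equivariance of $(SN_i)$ and reduce each parity sector to a mixed boundary value problem on a fundamental domain. The group $G=\langle\rho_1,\rho_2,\rho_3\rangle\cong(\mathbb Z_2)^3$ acts by conformal isometries on $M$ that preserve the Steklov/Neumann partition of $\partial M$ (each $\rho_j$ either swaps the two circles of $\Gamma_j$ or fixes each circle of $\Gamma_k$ for $k\ne j$), so $V^{(i)}$ decomposes automatically as the direct sum of its eight character components. I then pass to the fundamental octant $D=M\cap\{x,y,z>0\}$, which is a topological disk bounded by three arcs in $\partial M$ and three arcs on the $\rho_j$-fixed sets. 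A function of parity $(s_1,s_2,s_3)$ is uniquely determined by its restriction to $D$, and such a restriction extends to an element of $V^{(i)}_{s_1s_2s_3}$ iff on $D$ it is harmonic, satisfies the Steklov condition with eigenvalue $\sigma_1^{(i)}$ on the $\Gamma_i$-arc, Neumann on the $\Gamma_j$-arc for $j\ne i$, and on the $\rho_j$-reflection arc Neumann if $s_j=e$ or Dirichlet if $s_j=o$. In the $(e,e,e)$-sector constants lie in the kernel, so a first nonzero eigenfunction must satisfy the extra orthogonality $\int_{\gamma^{(i)}_1\cap D}u=0$; for every other parity, odd behavior under some $\rho_j$ makes $\int_{\Gamma_i}u=0$ automatic (either by cancellation between the two circles of $\Gamma_i$ when $j=i$, or by the on-circle reflection symmetry of $\rho_j$ when $j\ne i$).

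The dimension bound $\dim V^{(i)}_{s}\le 1$ follows from simple connectedness of $D$. In any sector containing at least one Dirichlet arc, the first eigenfunction can be taken nonnegative by replacing $u$ with $|u|$ in the Rayleigh quotient, is strictly positive in the interior by the strong maximum principle and the Hopf lemma, and any two such positive first eigenfunctions are proportional by the classical comparison $u_1-(\min u_1/u_2)u_2$. The $(e,e,e)$-sector is handled by the same argument after modding out the one-dimensional kernel of constants, though as shown below this sector turns out to be empty.

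To rule out the five bad parities I invoke strict Dirichlet-Neumann monotonicity of the first eigenvalue on $D$: replacing a reflection Dirichlet arc by a Neumann arc strictly decreases the first eigenvalue, the strict drop coming from a Hopf-lemma / unique-continuation argument (a Dirichlet first eigenfunction has nonzero normal derivative on the Dirichlet arc, so it cannot also be a Neumann eigenfunction there). Iterating produces strict chains from any two-odd or three-odd parity down to a one-odd parity, and hence no element of $V^{(i)}_{ooo},V^{(i)}_{eoo},V^{(i)}_{oeo},V^{(i)}_{ooe}$ attains $\sigma_1^{(i)}$. The main obstacle is the $(e,e,e)$-sector, where the passage to $(o,e,e)$ simultaneously replaces Neumann on the $\rho_i$-arc by Dirichlet (raising the eigenvalue) and drops the mean-zero constraint (lowering it), so the simple Dirichlet-Neumann monotonicity does not apply. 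Here I exploit that $\int_{\gamma^{(i)}_1\cap D}u_0=0$ forces any first $(e,e,e)$-eigenfunction $u_0$ to change sign on $\gamma^{(i)}_1\cap D$; from a zero $p\in\gamma^{(i)}_1\cap D$ of $u_0$ I build an $(o,e,e)$ test function by an odd-reflection and truncation construction centered on the separating nodal arc through $p$, and show that its Rayleigh quotient strictly undercuts $R[u_0]$, yielding the needed strict inequality between the first eigenvalues of the $(o,e,e)$ and $(e,e,e)$ sectors and hence $V^{(i)}_{eee}=0$. This last step is the only part of the argument that is not a routine consequence of $G$-equivariance, simple connectedness of $D$, and monotonicity, and is the main place where care is required.
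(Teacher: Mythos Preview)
Your approach is genuinely different from the paper's. The paper argues entirely on $M$ using the zero-counting fact that a first Steklov--Neumann eigenfunction has at most two zeros on any boundary component (citing \cite[Theorem~2.3]{FS2}). Each of the four steps (vanishing of two/three-odd sectors, vanishing of $(e,e,e)$, and the dimension bounds for the one-odd sectors) is a short contradiction obtained by producing too many boundary zeros from the imposed parities. Your route---passing to the simply connected fundamental hexagon $D$, identifying each parity sector with a mixed Steklov/Neumann/Dirichlet problem on $D$, and using positivity/simplicity of first eigenfunctions together with strict Dirichlet--Neumann monotonicity---is structurally appealing and works cleanly for the two/three-odd eliminations and for the $\dim\le 1$ bounds in the one-odd sectors.

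The gap is in your treatment of $V^{(i)}_{eee}$. You claim to build an $(o,e,e)$ test function by ``odd-reflection and truncation centered on the separating nodal arc through $p$,'' but this is not a proof: you have not said what is being reflected or truncated, nor why the resulting function vanishes on the $\rho_i$-arc. The natural candidate---restricting $u_0$ to one nodal domain $D_+$ and extending by zero---is an $(o,e,e)$ test function only when $D_+$ avoids the $\rho_i$-arc, and this fails precisely when the nodal arc of $u_0$ terminates on the $\rho_i$-arc. In that case neither nodal domain misses the $\rho_i$-arc, so your stated comparison with the $(o,e,e)$ sector breaks down. The argument can be rescued by a case analysis on where the nodal arc lands: in every case one of the two nodal domains avoids \emph{some} $\rho_j$-arc (for instance, if the nodal arc ends on the $\rho_i$-arc, then each nodal domain avoids one of the $\rho_j$-arcs with $j\ne i$), so you get a strict comparison with \emph{some} one-odd sector, which is all you need. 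But you have asserted the specific $(o,e,e)$ comparison without justification, and you also need Courant-type control on the nodal set (a single separating arc) which you have not stated. As written this step is incomplete; the paper's two-line zero-counting argument for $(e,e,e)$ avoids all of this.
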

\begin{proof}
We divide the proof into  four parts.

\vspace{2mm}

\noindent
1) $V^{(i)}_{eoo}=V^{(i)}_{oeo}=V^{(i)}_{ooe}=V^{(i)}_{ooo}=0$:

\vspace{1mm}

\noindent
Let $u$ be a first eigenfunction of $(SN_i)$. If $u$ is odd under $\rho_j$ and $\rho_k$ for $j \ne k$, then $u$ has four zeros on  $\Gamma_l$, $l \ne j, k$, at the fixed points of $\rho_j$  and $\rho_k$. But a first Steklov-Neumann eigenfunction can have at most two zeros on a boundary component (see for example \cite[Theorem 2.3]{FS2}).

\vspace{2mm}

\noindent
2) $V^{(i)}_{eee}=0$:

\vspace{1mm}

\noindent
Without loss of generality suppose $i=1$, and let $u$ be a first eigenfunction of $(SN^{(1)})$.  Then $\int_{\Gamma_1}  u \, ds=0$, and if $u$ is even  under $\rho_1$ then
$\int_{\gamma^{(1)}_1} u \, ds=0$ and $\int_{\gamma^{(1)}_2} u \, ds=0$. This implies  that $u$ has two simple zeros on  $\gamma^{(1)}_1$. If $u$ is also even under $\rho_2$ and $\rho_3$, then the zeros can't be on the fixed point set of  $\rho_2$ or $\rho_3$ since $u$  changes sign  across  the  zeros since they are   simple. Therefore,  $u$  has at least 4 zeros, a contradiction.

\vspace{2mm}

\noindent
In the next two parts suppose $i=1$. The cases  $i=2, 3$ are analogous.

\vspace{2mm}

\noindent
3) $\dim V^{(1)}_{eoe} \leq  1$  and  $\dim V^{(1)}_{eeo} \leq  1$:

\vspace{1mm}

\noindent
Choose a point  $p \in \gamma^{(1)}_1$  in the  fixed point set of  $\rho_2$.  Consider the linear transformation
$T: V_{eeo}^{(1)} \rightarrow \mathbb{R}$ given  by $T(u)=u(p)$. If $\dim V^{(1)}_{eeo} >1$ then  $\ker(T) \neq \{0\}$ and there  exists $u \in V^{(1)}_{eeo}$ such that $u(p)=0$. Since $\int_{\Gamma_1}  u \, ds=0$ and $u$ is even  under $\rho_1$,
$\int_{\gamma^{(1)}_1} u \, ds=0$ and $u$ has two simple zeros on  $\gamma^{(1)}_1$. Since a first Steklov-Neumann eigenfunction can have at most two zeros on a boundary component,  $p$ must  be one of these simple zeros. Since $p$ is simple,  $u$  changes sign across $p$,  a  contradiction since $p$ is on the fixed point set of  $\rho_2$ and  $u$  is even under $\rho_2$.  Therefore,  $\dim V^{(1)}_{eeo} \leq  1$.  Similarly, $\dim V^{(1)}_{eoe} \leq  1$.

\vspace{2mm}

\noindent
4) $\dim V^{(1)}_{oee} \leq  1$:

\vspace{1mm}

\noindent
Consider  $T: V^{(1)}_{oee}  \rightarrow \mathbb{R}$ given by $T(u) = \int_{\gamma^{(1)}_1} u  \,  ds$. If $\dim V^{(1)}_{oee} >  1$ then  there exists $u \in V^{(1)}_{oee}$ such that $\int_{\gamma^{(1)}_1} u \, ds=0$,  which implies that  $u$ has two simple zeros on  $\gamma^{(1)}_1$. Since $u$  is even under $\rho_2$  and $\rho_3$ the zeros can't be on the fixed point sets of $\rho_2$ or $\rho_3$, and  so there are  four  zeros, a  contradiction.  Therefore, $\dim V^{(1)}_{oee} \leq  1$.
\end{proof}

We now show that for a maximizing metric, the linearly independent eigenfunctions in Theorem  \ref{theorem:characterization-conformal} that give a free boundary harmonic map from $M$  into a product of balls can be chosen to lie in $V^{(i)}_{eoe}, V^{(i)}_{oee}, V^{(i)}_{eeo}$.
\begin{proposition}
Suppose there exists a metric $g_0$ on $M$, invariant under  $\rho_1, \rho_2, \rho_3$, such that $F(g_0) = \sup_g F(g)$, where the supremum is over all smooth metrics on $M$  in the conformal class of $g_0$ that  are invariant  under $\rho_1, \rho_2,  \rho_3$. Then there exist functions $\hat{u}^{(i)} \in V^{(i)}_{eoe}$, $\hat{v}^{(i)} \in V^{(i)}_{oee}$, $\hat{w}^{(i)} \in V_{eeo}^{(i)}$ for $i=1, 2, 3$ such that 
\[
     \varphi:=(\hat{u}^{(1)}, \hat{v}^{(1)}, \hat{w}^{(1)}, 
     \hat{u}^{(2)}, \hat{v}^{(2)}, \hat{w}^{(2)}, \hat{u}^{(3)}, \hat{v}^{(3)}, \hat{w}^{(3)}): M 
     \rightarrow B^{3}(a_1) \times B^{3}(a_2) \times B^{3}(a_3)
\]
is a  free boundary harmonic map with $\varphi(\Gamma_i)\subset \partial B^{3}(a_i)$  and the Hopf differential of $\varphi$ is real on $\p M$.
\end{proposition}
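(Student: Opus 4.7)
The plan is to apply Theorem \ref{theorem:characterization-conformal} with the finite group $G = \langle \rho_1, \rho_2, \rho_3 \rangle \cong \mathbb{Z}_2 \times \mathbb{Z}_2 \times \mathbb{Z}_2$, and then refine the basis of first eigenfunctions it produces so that each component lies in one of the three symmetry subspaces identified by Proposition \ref{proposition:even-odd}.

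First I would invoke Theorem \ref{theorem:characterization-conformal} for this $G$ to obtain, for each $i=1,2,3$, independent first eigenfunctions $u_1^{(i)}, \ldots, u_{n_i}^{(i)} \in V_1^{(i)}$ and an orthogonal representation $\rho: G \to O(n_1) \times O(n_2) \times O(n_3)$ such that the assembled map
\[
u = (u_1^{(1)}, \ldots, u_{n_1}^{(1)}, u_1^{(2)}, \ldots, u_{n_2}^{(2)}, u_1^{(3)}, \ldots, u_{n_3}^{(3)}): M \to B^{n_1}(a_1) \times B^{n_2}(a_2) \times B^{n_3}(a_3)
\]
is $\rho$-equivariant, is a free boundary harmonic map with $u(\Gamma_i) \subset \partial B^{n_i}(a_i)$, and has Hopf differential real on $\partial M$. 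The $G$-equivariance forces the linear span $W_i := \mathrm{span}\{u_1^{(i)}, \ldots, u_{n_i}^{(i)}\} \subset V_1^{(i)}$ to be a $G$-invariant subspace, isomorphic to $\mathbb{R}^{n_i}$ with the $G$-action coming from the $i$-th factor of $\rho$.

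Next I would apply Proposition \ref{proposition:even-odd} to decompose $V_1^{(i)} = V_{eoe}^{(i)} \oplus V_{oee}^{(i)} \oplus V_{eeo}^{(i)}$ as a direct sum of three pairwise inequivalent one-dimensional $G$-characters, each summand of dimension at most $1$. Since $W_i$ is a $G$-subrepresentation, it decomposes automatically as the sum of some subcollection of these isotypic pieces; in particular $n_i \leq 3$. Because the $L^2(\Gamma_i)$ inner product is $G$-invariant (the metric is $G$-invariant), eigenfunctions from distinct isotypic components are $L^2(\Gamma_i)$-orthogonal. Choosing a suitably scaled vector from each nonzero character piece and the zero vector from each trivial piece, I obtain functions $\hat{u}^{(i)} \in V_{eoe}^{(i)}$, $\hat{v}^{(i)} \in V_{oee}^{(i)}$, $\hat{w}^{(i)} \in V_{eeo}^{(i)}$ whose nonzero entries form an orthogonal basis of $W_i$. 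The associated change of basis from $(u_1^{(i)}, \ldots, u_{n_i}^{(i)})$ (padded by zeros to dimension $3$) is an orthogonal transformation, hence maps $B^{n_i}(a_i)$ into $B^3(a_i)$ while preserving the identity $(\hat{u}^{(i)})^2 + (\hat{v}^{(i)})^2 + (\hat{w}^{(i)})^2 = a_i^2$ on $\Gamma_i$.

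Finally I would verify that the re-coordinatized map $\varphi = (\hat{u}^{(1)}, \hat{v}^{(1)}, \hat{w}^{(1)}, \ldots, \hat{u}^{(3)}, \hat{v}^{(3)}, \hat{w}^{(3)})$ inherits every conclusion of Theorem \ref{theorem:characterization-conformal}: harmonicity and the mixed Steklov-Neumann boundary conditions survive because each coordinate is a $V_1^{(i)}$-eigenfunction (or zero), $\varphi(\Gamma_i) \subset \partial B^3(a_i)$ follows from the norm identity above, and the reality of the Hopf differential on $\partial M$ is preserved under orthogonal coordinate changes applied factor-by-factor. The only substantive step is the representation-theoretic reduction in the preceding paragraph, which replaces the abstract $n_i$-dimensional $G$-representation supplied by Theorem \ref{theorem:characterization-conformal} by the explicit character decomposition of Proposition \ref{proposition:even-odd}; the rest is bookkeeping about orthogonal changes of frame in each factor.
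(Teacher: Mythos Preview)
Your argument is correct and is a representation-theoretic repackaging of the paper's proof. The paper carries out the isotypic decomposition by hand: starting from the identity $u^2+v^2+w^2=a_i^2$ on $\Gamma_i$, it splits each function into $\rho_1$-even and $\rho_1$-odd parts, composes the identity with $\rho_1$, and averages to obtain $u_e^2+u_o^2+v_e^2+v_o^2+w_e^2+w_o^2 = a_i^2$; it then iterates with $\rho_2$ and $\rho_3$, invoking Proposition \ref{proposition:even-odd} at each stage to collapse several functions lying in a single one-dimensional character space into one $\hat u$, $\hat v$, or $\hat w$. You instead use the equivariance supplied by Theorem \ref{theorem:characterization-conformal} to decompose the Euclidean target $\mathbb{R}^{n_i}$ into $G$-isotypic pieces and apply an orthogonal change of coordinates there. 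Both routes rest on the same key input (Proposition \ref{proposition:even-odd}); yours is more conceptual, the paper's more explicit and does not need to track the representation $\rho$ on the target.

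One small imprecision worth tightening: you justify the orthogonality of the change of basis via the $L^2(\Gamma_i)$ inner product on $W_i$, but what preserves the pointwise identity $\sum_j (u_j^{(i)})^2 = a_i^2$ is an orthogonal transformation of the Euclidean target $\mathbb{R}^{n_i}$, not of the function space. The fix is immediate: since $\rho$ takes values in $O(n_i)$, the isotypic decomposition of $\mathbb{R}^{n_i}$ is Euclidean-orthogonal, and (because $\mathbb{R}^{n_i}\cong W_i$ as $G$-modules and $W_i$ is multiplicity-free by Proposition \ref{proposition:even-odd}) each isotypic summand of $\mathbb{R}^{n_i}$ is one-dimensional. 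An orthonormal Euclidean basis adapted to that decomposition then gives the orthogonal coordinate change you need, and equivariance forces each new component function into the corresponding one-dimensional character space of $V_1^{(i)}$.
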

\begin{proof}
By Proposition  \ref{proposition:even-odd},  $\dim V^{(i)} \leq 3$,  and  so by Theorem \ref{theorem:characterization-conformal} we only need to show that we can choose the component  functions of $\varphi$ to lie in the $eoe$, $oee$, $eeo$ eigenspaces. Fix  $i$, $1 \leq i  \leq 3$. By Theorem \ref{theorem:characterization-conformal}  there exist functions  $u, v, w  \in  V^{(i)}$ such that  $u^2+v^2+w^2=a_i^2$.  We  may decompose each of these functions uniquely into a sum of a  function that is even under  $\rho_1$ and a  function that is odd  under $\rho_1$:
\[
     (u_e  +u_o)^2 + (v_e+v_o)^2+(w_e+w_o)^2=a_i^2.
\]
Composing this with $\rho_1$, we obtain
\[
     (u_e  -u_o)^2 + (v_e-v_o)^2+(w_e-w_o)^2=a_i^2,
\]
which implies that
\[
    u_e^2+u_o^2+v_e^2+v_o^2+w_e^2+w_o^2=a_i^2.
\]
Since $u_o$, $v_o$, $w_o$ are odd under $\rho_1$, by Proposition \ref{proposition:even-odd} they must even under $\rho_2$ and $\rho_3$, and since $\dim  V^{(i)}_{oee}  \leq 1$, each must be a multiple of some function, and we have 
\[
    u_e^2+v_e^2+w_e^2=a_i^2 -\hat{w}^2
\]
for some function $\hat{w} \in V^{(i)}_{oee}$.
We now decompose each of $u_e$,  $v_e$, $w_e$ into a sum of a function that is even under  $\rho_2$ and a function that is odd  under $\rho_2$:
\[
   (u_{ee}+u_{eo})^2  + (v_{ee}+v_{eo})^2 +(w_{ee}+w_{eo})^2=a_i^2-\hat{w}^2.
\]
Composing with  $\rho_2$, we obtain
\[
   (u_{ee}-u_{eo})^2  + (v_{ee}-v_{eo})^2 +(w_{ee}-w_{eo})^2=a_i^2-\hat{w}^2
\]
which implies that
\[
   u_{ee}^2+u_{eo}^2  + v_{ee}^2+v_{eo}^2 +w_{ee}^2+w_{eo}^2=a_i^2-\hat{w}^2.
\]
Since $u_{ee}$, $v_{ee}$, $w_{ee}$ are even under $\rho_1$  and $\rho_2$, by Proposition \ref{proposition:even-odd} they must be odd under $\rho_3$, and since $\dim  V^{(i)}_{eeo}  \leq 1$, each must be a multiple of some function, and we have 
\[
   u_{eo}^2  +v_{eo}^2 +w_{eo}^2=a_i^2-\hat{w}^2 -\hat{v}^2
\]
for some function $\hat{v} \in V^{(i)}_{eeo}$.
We now decompose each of $u_{eo}$,  $v_{eo}$, $w_{eo}$ into a sum of a function that is even under $\rho_3$ and a function that is odd  under $\rho_3$, and since $V^{(i)}_{eoo}=0$ (by Proposition \ref{proposition:even-odd}) we obtain
\[
   u_{eoe}^2  +v_{eoe}^2 +w_{eoe}^2=a_i^2-\hat{w}^2 -\hat{v}^2.
\]
Since $\dim V^{(i)}_{eoe} \leq 1$, $u_{eoe}^2  +v_{eoe}^2 +w_{eoe}^2=\hat{u}^2$  for some function $\hat{u} \in V^{(i)}_{eoe}$, and we obtain
\[
     \hat{u}^2 +  \hat{v}^2 + \hat{w}^2  =  a_i^2.
\]
Doing this for each $i$, $1 \leq i \leq 3$, we obtain the result.
\end{proof}

Recall that the Steklov-Neumann eigenvalue $\sigma^{(i)}_1(g)$ depends only on the metric $g$ on the Steklov boundary $\Gamma_i$ and the conformal structure of the surface $(M,g)$. As discussed in section \ref{section:canonical-model}, $M$ is conformally  equivalent  to the {\em canonical model}, 
a subdomain in $\mathbb S^2$ with circular boundary components of radii $r_1, r_2, r_3$ centered at points 
on the coordinate axes, that is invariant under reflections in the coordinate planes. 
 
Fix $r_2, r_3$ and let $\sigma_1^{(i)}(r_1)$ denote the first Steklov-Neumann eigenvalue of  $(SN_i)$ of the canonical model $M_{r_1}$  with the spherical metric as a function of $r_1$. 
Let $\sigma_1^{(i)}(M_{r_1},g)$ denote the eigenvalue with respect to a conformal metric $g$ on $M_{r_1}$ 
that is invariant under the reflections $\rho_j$ for $j=1,2,3$.
\begin{lemma} \label{lemma:epsilon23}
For $\epsilon >0$ small, $\sigma_1^{(i)}(M_\epsilon,g) \geq \sigma_1^{(i)}(M_0,g) - c \epsilon^2$, for some constant $c$ independent of $\epsilon$,  for $i=2, 3$.
\end{lemma}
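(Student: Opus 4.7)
My plan is to obtain the lower bound by extending any first eigenfunction $u$ of $(SN_i)$ on $M_\epsilon$ to a test function $\tilde u$ for $\sigma_1^{(i)}(M_0,g)$ whose Rayleigh quotient exceeds $\sigma_1^{(i)}(M_\epsilon,g)$ by only $O(\epsilon^2)$, and then invoking the variational characterization of $\sigma_1^{(i)}(M_0,g)$. First I normalize $u$ so that $\int_{\Gamma_i} u^2\,ds_g = 1$ and $\int_{\Gamma_i} u\,ds_g = 0$, so that $\int_{M_\epsilon}\|\nabla u\|^2\,dA_g = \sigma_1^{(i)}(M_\epsilon,g)$. By the domain monotonicity (\ref{equation:monotone}) this is at most $\sigma_1^{(i)}(M_0,g)$, giving a uniform upper bound on the Dirichlet energy of $u$ as $\epsilon \to 0$.

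Next I will define $\tilde u$ on $M_0$ by setting $\tilde u = u$ on $M_\epsilon$ and, on each of the two removed disks $D_\epsilon$ centered at the $x$-axis poles, taking the harmonic extension of the boundary values $u|_{\partial D_\epsilon}$. Since the Dirichlet integral is conformally invariant in dimension two, I may work in local Euclidean conformal coordinates $(r,\theta)$ on a fixed neighborhood $r\le R$ of each puncture, with $R$ chosen small enough (depending on $r_2,r_3$ but not on $\epsilon$) that this neighborhood is disjoint from $\Gamma_2\cup\Gamma_3$. Because $u$ is harmonic on the annulus $\epsilon\le r\le R$ and satisfies $\partial_r u|_{r=\epsilon}=0$, its Fourier expansion contains no $\log r$ term and reads
\[
u(r,\theta) = A_0 + \sum_{n \ge 1}\bigl(r^n + \epsilon^{2n}r^{-n}\bigr)\bigl(A_n\cos n\theta + B_n\sin n\theta\bigr).
\]
The harmonic extension to $r\le \epsilon$ is then $A_0 + \sum_{n \ge 1}2r^n(A_n\cos n\theta + B_n\sin n\theta)$, and a direct computation shows its Dirichlet energy equals $\sum_{n\ge 1}4\pi n(A_n^2 + B_n^2)\epsilon^{2n}$, dominated by the $n=1$ term.

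The heart of the argument is the uniform boundedness of the coefficients $A_n, B_n$. Computing the Dirichlet energy of $u$ on the fixed annulus $\epsilon\le r\le R$ from the above expansion produces, for each $n\ge 1$, a contribution of $\pi n(A_n^2+B_n^2)(R^{2n}-\epsilon^{4n}R^{-2n})$. Combined with the uniform bound $\int_{M_\epsilon}\|\nabla u\|^2 \le \sigma_1^{(i)}(M_0,g)$, this gives $A_n^2+B_n^2 \le C_{n,R}$ uniformly in $\epsilon$. Summing the $n=1$ contribution over the two disks yields $\int_{D_\epsilon^{(1)}\cup D_\epsilon^{(2)}}\|\nabla\tilde u\|^2 \le c\epsilon^2$, with the higher modes contributing only $O(\epsilon^4)$.

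To finish, since $\tilde u=u$ on $\Gamma_i$, it is still a mean-zero unit-$L^2$ test function for $\sigma_1^{(i)}(M_0,g)$, so
\[
\sigma_1^{(i)}(M_0,g) \le \int_{M_0}\|\nabla\tilde u\|^2\,dA_g = \sigma_1^{(i)}(M_\epsilon,g) + c\epsilon^2,
\]
which rearranges to the claim. The main technical point to be careful about is the coefficient estimate above; the Neumann condition at $r=\epsilon$ is crucial, as it kills the $\log r$ term and forces the relation $B_n = A_n\epsilon^{2n}$, making the leading harmonic extension energy genuinely $O(\epsilon^2)$ rather than the $O(1/|\log\epsilon|)$ capacity-type rate one would see for a shrinking Dirichlet hole. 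Conformal invariance of the two-dimensional Dirichlet integral is what allows me to reduce the entire local analysis to flat Euclidean coordinates.
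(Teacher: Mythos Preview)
Your argument is correct and follows essentially the same line as the paper's: take a first eigenfunction $u$ on $M_\epsilon$, extend it across the removed disks with added Dirichlet energy $O(\epsilon^2)$, and plug the extension into the variational characterization of $\sigma_1^{(i)}(M_0,g)$. The only differences are in execution---you use the harmonic extension and bound the Fourier coefficients $A_n,B_n$ directly from the Dirichlet energy on a fixed annulus, whereas the paper uses the linear radial extension $u(r,\theta)=(r/\epsilon)u(\epsilon,\theta)$ and bounds the coefficients via an (implicit) interior $C^1$ estimate for $u$ on the outer circle; your version is slightly more self-contained. (One cosmetic slip: in your final paragraph the phrase ``forces the relation $B_n=A_n\epsilon^{2n}$'' conflates the $\cos/\sin$ coefficients with the $r^n/r^{-n}$ coefficients, but this does not affect the argument.)
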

\begin{proof}
For $i=1$ or $2$, let $u$ be a first eigenfunction for $\sigma^{(i)}_1(M_\epsilon, g)$.
Since a  disk
centered at $p_1$ in $\mathbb{S}^2$ is conformally equivalent to a Euclidean disk,  without loss of generality we will work on the Euclidean unit disk $D$. We may write $\varphi=u|_{\partial D}$ 
as
\[
    \varphi (\theta)=a_0 + \sum_{n=1}^\infty a_n \cos(n\theta) + b_n \sin(n \theta).
\]
Since $u$ is a Steklov-Neumann eigenfunction, on $D \setminus D_\epsilon$, $u$ satisfies
\begin{align*}
     \Delta u &= 0 \mbox{ on } D \setminus D_\epsilon \\
     u &= \varphi \mbox{ on } \partial D \\
     \fder{u}{r} &=0 \mbox{ on } \partial D_\epsilon,
\end{align*}
which implies that 
\[
    u= a_0 + \sum_{n=1}^\infty a_n \frac{r^n + \epsilon^{2n} r^{-n}}{1+\epsilon^{2n}} \cos(n\theta)
                + \sum_{n=1}^\infty a_n \frac{r^n + \epsilon^{2n} r^{-n}}{1+\epsilon^{2n}} \sin(n\theta).
\]
On $\partial D_\epsilon$ we have
\[
  \fder{u}{\theta}(\epsilon, \theta)
  = \sum_{i=1}^\infty \frac{2\epsilon^n}{1+\epsilon^{2n}} \left(nb_n \cos(n\theta) -na_n \sin(n\theta)\right).
\]
Since $\varphi$ is smooth, $|na_n| \leq c$ and $|nb_n| \leq c$ for a constant $c$ independent of $n$, and
\[
    \left|\fder{u}{\theta}(\epsilon,\theta)\right|
    \leq \sum_{n=1}^\infty c \,\epsilon^n 
    =c \, \frac{\epsilon}{1-\epsilon}.
\]
This implies, after subtracting away a constant term, that $|u(\epsilon, \theta)|\leq c \, \epsilon^2$.
Extend $u$ to $D_\epsilon$ by
\[
     u(r,\theta)=\frac{r}{\epsilon}u(\epsilon, \theta). 
\]
Then
\[
     \|\nabla u\|^2(r,\theta) 
     = \frac{1}{\epsilon^2} |u(\epsilon,\theta)|^2 
         + \frac{1}{r^2} \frac{r^2}{\epsilon^2} \left|\fder{u}{\theta}(\epsilon,\theta)\right|^2 \leq C 
     \quad \mbox{ on } D_\epsilon,
\]
where $C$ is independent of $\epsilon$. Therefore,
$
     \int_{D_\epsilon} |\nabla u|^2 \,da \leq c \epsilon^2
$
and we have
\begin{align*}
     \sigma^{(i)}_1(M_0,g) 
      & \leq \frac{ \int_{M_0} \|\nabla u\|^2 \, da}{\int_{\Gamma_i} u^2 \, ds} 
      = \frac{ \int_{M_\epsilon} \|\nabla u\|^2 \, da}{\int_{\Gamma_i} u^2 \, ds}
          + \frac{ \int_{D_\epsilon} \|\nabla u\|^2 \, da}{\int_{\Gamma_i} u^2 \, ds} \\
      &= \sigma^{(i)}_1(M_\epsilon,g) + \frac{ \int_{D_\epsilon} \|\nabla u\|^2 \, da}{\int_{\Gamma_i} u^2 \, ds} \\
      & \leq \sigma^{(i)}_1(M_\epsilon,g) + c\epsilon^2.
\end{align*}
\end{proof}

We now derive an upper bound on $\epsilon\sigma_1^{(1)}(\epsilon)$ which (we will show in the next section) is up to a constant the largest 
normalized first eigenvalue of $(SN_1)$ for $M_\epsilon$ over conformal metrics on $M_\epsilon$. 
\begin{lemma} \label{lemma:upper-lower-bound}
Assume that $r_2$ and $r_3$ are fixed and that $\epsilon$ is small. We then have the upper bound 
\[  \epsilon\sigma_1^{(1)}(\epsilon)\leq \frac{c}{|\log(\epsilon)|}
\]
for a positive constant $c$ independent of $\epsilon$.
\end{lemma}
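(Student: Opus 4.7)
The plan is to apply the variational characterization
\[
  \sigma_1^{(1)}(\epsilon) \;=\; \inf_{\substack{u \in W^{1,2}(M_\epsilon) \\ \int_{\Gamma_1} u\,ds\,=\,0}}\ \frac{\int_{M_\epsilon}\|\nabla u\|^2\,dA}{\int_{\Gamma_1} u^2\,ds}
\]
to an explicit logarithmic test function concentrated near the two shrinking boundary components of $\Gamma_1$. Let $p_1,p_2$ be the two antipodal points on the $x$-axis that are the centers of the disks forming $\Gamma_1$, and fix a radius $R > 0$, depending only on $r_2$ and $r_3$, small enough that the spherical disks $D_R(p_1), D_R(p_2)$ are pairwise disjoint and disjoint from $\Gamma_2 \cup \Gamma_3$. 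For $\epsilon < R$ set
\[
  u \;=\; \pm\,\frac{\log(R/r)}{\log(R/\epsilon)} \quad \text{on } D_R(p_j) \setminus D_\epsilon(p_j),\ j=1,2,
\]
with the $+$ sign for $j=1$ and the $-$ sign for $j=2$, and extend $u$ by $0$ outside $D_R(p_1) \cup D_R(p_2)$. Here $r$ denotes spherical geodesic distance from $p_j$. This gives $u \equiv +1$ on $\gamma^{(1)}_1$ and $u \equiv -1$ on $\gamma^{(1)}_2$, and $u$ is continuous (and in fact $W^{1,2}$) across the circles $r = R$ where it vanishes.

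The mean-zero constraint $\int_{\Gamma_1} u \, ds = 0$ holds because the two circles have equal spherical length by the $\rho_1$-symmetry of the canonical model, so $u$ is admissible. I would bound the numerator using the conformal invariance of the $2$-dimensional Dirichlet integral: in a local conformal chart centered at $p_j$ (e.g.\ stereographic projection) the spherical metric becomes $\lambda^2|dz|^2$ with $\lambda$ smooth and bounded above and below on $D_R(p_j)$, so the annulus $D_R(p_j) \setminus D_\epsilon(p_j)$ is identified with a Euclidean annulus whose radii are comparable to $\epsilon$ and $R$. Since $u = \log(\tilde R/|z|)/\log(\tilde R/\tilde\epsilon)$ is harmonic on this Euclidean annulus, a direct computation gives
\[
  \int \|\nabla u\|^2 \, dA_{\mathrm{eucl}} \;=\; \int_{\tilde\epsilon}^{\tilde R} \frac{2\pi r}{r^2(\log(\tilde R/\tilde\epsilon))^2}\,dr \;=\; \frac{2\pi}{\log(\tilde R/\tilde\epsilon)}.
\]
Since $\log(\tilde R/\tilde\epsilon) = |\log\epsilon| + O(1)$ and the contribution outside the two annuli vanishes, summing over $j=1,2$ yields $\int_{M_\epsilon}\|\nabla u\|^2\,dA \leq C/|\log\epsilon|$ for a constant $C$ depending only on $r_2, r_3$.

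For the denominator, each circle in $\Gamma_1$ has spherical length $2\pi \sin\epsilon \geq c\epsilon$ for small $\epsilon$, and $u \equiv \pm 1$ there, so $\int_{\Gamma_1} u^2 \, ds \geq 2c\epsilon$. Dividing yields $\sigma_1^{(1)}(\epsilon) \leq C/(\epsilon |\log\epsilon|)$, which rearranges to the stated bound. The argument is a standard logarithmic capacity estimate and there is no real obstacle; the only points that need care are (i) that $R$ can be chosen uniformly for all small $\epsilon$, which is clear since $r_2, r_3$ are fixed and positive, and (ii) that the conformal factor relating the spherical and Euclidean metrics on the fixed disks $D_R(p_j)$ is bounded above and below by constants depending only on $R$, so it influences the overall constant but not the $1/|\log\epsilon|$ scaling that is produced by the logarithmic capacity of a vanishing disk in dimension two.
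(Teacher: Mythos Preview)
Your proposal is correct and follows essentially the same approach as the paper: both use the odd logarithmic cutoff $\phi(r)=\log(R/r)/\log(R/\epsilon)$ (equivalently $1-\log(r/\epsilon)/\log(R/\epsilon)$) supported in small disks around the centers of $\Gamma_1$ as a test function in the variational characterization of $\sigma_1^{(1)}$, obtaining the $c/|\log\epsilon|$ Dirichlet bound and the $\epsilon$-length denominator. Your write-up is in fact more detailed than the paper's, which simply asserts the Dirichlet estimate as a straightforward calculation; the only minor imprecision is your identification of $u$ with the Euclidean-harmonic function $\log(\tilde R/|z|)/\log(\tilde R/\tilde\epsilon)$ after stereographic projection, since $u$ was defined via spherical distance---but this is harmless, as either you can compute the Dirichlet integral directly on the sphere (using $\sin r/r^2\sim 1/r$), or invoke conformal invariance and note that the spherical and Euclidean radial coordinates are comparable on a fixed cap.
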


\begin{proof} We assume that $r_0>0$ is such that the disk of radius $r_0$
concentric with $\gamma_1^{(1)}$ is disjoint from $\Gamma_2$ and $\Gamma_3$. We choose a test function $\phi$ which is odd under
$\rho_1$ and which is zero outside $D_{r_0}$ and its image under $\rho_1$. Inside $D_{r_0}\setminus D_\epsilon$ we choose
\[ \phi(r)=1-\frac{\log(r/\epsilon)}{\log(r_0/\epsilon)},\ \ \epsilon\leq r\leq r_0
\] 
where $r$ is the spherical distance from the center of $D_{r_0}$. A straightforward calculation shows that 
\[ \int_{M_\epsilon}\|\nabla\phi\|^2\ da\leq \frac{c}{|\log(\epsilon)|}.
\]
This in turn implies 
\[ \epsilon\sigma_1^{(1)}(\epsilon)\leq \epsilon \frac{\int_{M_\epsilon}\|\nabla\phi\|^2\ da}{\int_{\partial M_\epsilon}\phi^2\ ds}\leq \frac{c_2}{|\log(\epsilon)|}
\]
for a constant $c_2$. This is the upper bound.
\end{proof}

We see that the above argument does not require the metric on $M_\epsilon$ to be spherical. Let $g$ be any conformal metric on $M_\epsilon$,
which is invariant under the reflections $\rho_i$ for $i=1,2,3$ and let $L(\Gamma_1)$ denote the total length of the boundary curves in $\Gamma_1$.
\begin{proposition}  \label{proposition:upper-bound}
For $\epsilon$ small we have 
\[ L(\Gamma_1)\sigma_1^{(1)}(M_\epsilon,g)\leq \frac{c_2}{|\log(\epsilon)|}.
\]
\end{proposition}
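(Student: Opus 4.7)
The plan is to reuse the test function $\phi$ from Lemma \ref{lemma:upper-lower-bound} verbatim, and to exploit the conformal invariance of the Dirichlet energy in dimension two, together with the observation that $|\phi|\equiv 1$ on $\Gamma_1$.

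More precisely, let $\phi$ be the function on $M_\epsilon$ constructed in the proof of Lemma \ref{lemma:upper-lower-bound}: it is odd under $\rho_1$, supported in the union of $D_{r_0}\setminus D_\epsilon$ and its image under $\rho_1$, and given by $\phi(r)=1-\log(r/\epsilon)/\log(r_0/\epsilon)$ on $D_{r_0}\setminus D_\epsilon$, where $r$ is spherical distance from the center of the removed disk. By construction $\phi\equiv 1$ on the component of $\Gamma_1$ bounding $D_\epsilon$ and $\phi\equiv -1$ on its image under $\rho_1$, so $\phi^2\equiv 1$ on $\Gamma_1$. Since $g$ is $\rho_1$-invariant and $\phi$ is $\rho_1$-odd, we have $\int_{\Gamma_1}\phi\,ds_g=0$, so $\phi$ is a valid test function for $\sigma_1^{(1)}(M_\epsilon,g)$.

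Since $M_\epsilon$ is two-dimensional, the Dirichlet energy $\int_{M_\epsilon}\|\nabla\phi\|^2\,dA$ is conformally invariant, hence equal to the energy computed with the spherical metric. The latter was already estimated in the proof of Lemma \ref{lemma:upper-lower-bound} and is bounded by $c/|\log(\epsilon)|$. Meanwhile
\[
   \int_{\Gamma_1}\phi^2\,ds_g \;=\; L_g(\Gamma_1).
\]
Combining these via the variational characterization of $\sigma_1^{(1)}$ gives
\[
   \sigma_1^{(1)}(M_\epsilon,g)\;\leq\;\frac{\int_{M_\epsilon}\|\nabla\phi\|_g^2\,dA_g}{\int_{\Gamma_1}\phi^2\,ds_g}
   \;=\;\frac{\int_{M_\epsilon}\|\nabla\phi\|_{g_0}^2\,dA_{g_0}}{L_g(\Gamma_1)}
   \;\leq\;\frac{c}{|\log(\epsilon)|\,L_g(\Gamma_1)},
\]
which is the desired inequality after rearranging. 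There is no serious obstacle; the only thing to verify is that the test function from Lemma \ref{lemma:upper-lower-bound} extends to an admissible $W^{1,2}$ function that is identically $\pm 1$ on $\Gamma_1$ (so that the denominator is exactly $L_g(\Gamma_1)$, independent of $g$), which is immediate from its explicit form.
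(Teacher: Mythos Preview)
Your argument is correct and is exactly the approach the paper has in mind: the paper simply remarks that ``the above argument does not require the metric on $M_\epsilon$ to be spherical,'' and your proof spells out the two points that make this work, namely the conformal invariance of the Dirichlet integral in dimension two and the fact that $\phi^2\equiv 1$ on $\Gamma_1$ so that the denominator becomes $L_g(\Gamma_1)$ regardless of $g$.
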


We will prove a corresponding lower bound for a maximizing metric in the conformal class in the next section.

\section{Odd eigenvalues and minimal surfaces in $\mathbb R^3$}

In this section we consider the problem of finding critical points of $F$ among surfaces with the symmetries we are imposing and also with the restriction
that our eigenfunctions be odd under the reflection which interchanges the Steklov boundary components. This will lead us to a variational construction
of the Schwarz $P$-surface and generalizations of it to rectangular solids with arbitrary side lengths. Recall that the moduli space $\mathcal M$ of our surfaces can be described using the canonical model in terms of three positive radii $r_1,r_2,r_3$ with the restriction that $r_i+r_j<\pi/2$ for $i\neq j$,
\[ \mathcal M=\{(r_1,r_2,r_3):\ r_i>0,\ r_j+r_k<\tfrac{\pi}{2}\ \forall \, i,j,k\ \mbox{with}\ j\neq k\}.
\]
Note that the restriction $r_j+r_k<\tfrac{\pi}{2}$ guarantees that the circles are disjoint in $\mathbb S^2$. We see from its definition that $\mathcal M$ is an open
convex subset of $\mathbb R^3$.

Now given a point $M\in \mathcal M$ and a conformal metric $g$ on $M$ we let $\sigma_1^{(i)}(g)$ denote the lowest eigenvalue for the 
problem $(SN_i)$. Note that the eigenvalue depends only on the metric $g$ restricted to the components of $\Gamma_i$ and the conformal structure.
We also let $\sigma_1^{(ie)}(g)$ (resp. $\sigma_1^{(io)}(g)$) denote the lowest eigenvalues when the competing functions are restricted to functions which are even (resp. odd) under the reflection $\rho_i$. We see from the discussion of the previous section that for any metric $g$
\begin{equation} \label{equation:min-even-odd}
      \sigma_1^{(i)}(g)=\min\{\sigma_1^{(ie)}(g),\sigma_1^{(io)}(g)\}.
\end{equation}

In this section we will discuss geometry related to the choice of odd eigenfunctions,  and  consider the extremal problem for the functional
\[
    F^o(g)=\sum_{i=1}^3 a_i^2L_g(\Gamma_i)\sigma_1^{(io)}(g).
\]
 We recall from Proposition \ref{proposition:even-odd} that the multiplicity of 
$\sigma_1^{(io)}(g)$ is one for any metric $g$ (with the reflection symmetries). For any $M\in \mathcal M$ we will explicitly describe the metric and
eigenfunction which maximizes $\sigma_1^{(io)}(g)$ over conformal metrics $g$ with fixed length $L_i$ of $\Gamma_i$. 
Let $u_i$ be the harmonic function on $M$ that satisfies boundary conditions $u_i=1$ on $\gamma_1^{(i)}$, 
$u_i=-1$ on $\gamma_2^{(i)}$, and $\nu u_i=0$ on each component of $\Gamma_j$ for $j\neq i$ where $\nu$ is the unit normal for any conformal
metric near $\Gamma_j$ (the Neumann condition is metric independent). We now let $\lambda_i=\nu u_i$ on $\gamma_1^{(i)}$ and 
$\lambda_i=-\nu u_i$ on $\gamma_2^{(i)}$ where $\nu$ denotes the outward unit normal with respect the standard metric on $\mathbb S^2$. We then
note that the metric $\lambda_i$ is symmetric under $\rho_i$ since both $u_i$ and $\nu$ are odd under $\rho_i$. Thus $\lambda_i$ extends to
a conformal metric on $M$, and we now show that this metric has an extremal property. We let $L_i$ denote the length of $\Gamma_i$ with respect
to $\lambda_i$. Note also that by definition we see that $u_i$ is an eigenfunction for $(SN_i)$ with eigenvalue $1$. 

We explicitly state the following result which is important both for the maximization of $F$ and $F^o$.
\begin{lemma}  \label{lemma:maximization}
 If we fix a conformal structure on $M$, then a metric $g$ is determined by even density functions $\lambda_i$ on $\Gamma_i$ for 
$i=1,2,3$. A metric $g$ maximizes $F$ (resp. $F^o$) in the conformal class if and only if $\lambda_i$ maximizes $(SN_i)$ (resp. $(SN_i^o)$)
for each $i$.
\end{lemma}
\begin{proof} This follows from the previous observations that the eigenvalues of both $(SN_i)$ and $(SN_i^o)$ depend only on the metric on 
$\Gamma_i$, and so a maximizer of either $F$ or $F^o$ is obtained by a metric whose value on $\Gamma_i$ maximizes the corresponding
eigenvalue $(SN_i)$ or $(SN_i^o)$.
\end{proof}

We now describe the maximizer in a conformal class of $F^o$.

\begin{proposition} For any conformal metric $g$ on $M$ we have $L_g(\Gamma_i)\sigma_1^{(i)}(g)\leq L_g(\Gamma_i)\sigma_1^{(io)}(g)\leq L_i$ where equality holds in the first inequality only if the first eigenfunction  for $(SN_i)$  with  metric $g$ is odd. Although $u_i$ might not be a first eigenfunction, it is always a first {\it odd} eigenfunction in that it minimizes the Raleigh quotient among functions that are odd under $\rho_i$. Thus we have
\[ L_i=\max\{L_g(\Gamma_i)\sigma_1^{(io)}(g):\ g\ \mbox{conformal metric}\}.
\]
\end{proposition}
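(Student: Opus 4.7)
The plan is to use $u_i$ itself as a trial function in the variational characterization of $\sigma_1^{(i)}(g)$ and to exploit the two-dimensional conformal invariance of the Dirichlet energy. Because the metrics in play are $\rho_i$-invariant and $u_i$ is odd under $\rho_i$, the admissibility condition $\int_{\Gamma_i} u_i \, ds_g = 0$ is automatic.

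First I would evaluate the Rayleigh quotient $R(u_i;g)$. By conformal invariance its numerator agrees with the Dirichlet energy of $u_i$ in the background spherical metric $g_0$; integration by parts, together with the harmonicity of $u_i$ and the Neumann condition on $\p M \setminus \Gamma_i$, reduces this to $\int_{\Gamma_i} u_i \, \p_{\nu_0} u_i \, ds_0$. On $\gamma_1^{(i)}$ the integrand is $a_i \lambda_i$, and on $\gamma_2^{(i)}$ it is $(-a_i)(-\lambda_i) = a_i \lambda_i$ as well, so the numerator equals $a_i L_i$. Since $u_i^2 \equiv a_i^2$ on $\Gamma_i$, the denominator is $a_i^2 L_g(\Gamma_i)$, and we conclude $\sigma_1^{(i)}(g) \leq L_i / (a_i L_g(\Gamma_i))$, the claimed bound. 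Equality forces $u_i$ to be a first eigenfunction for $(SN_i)$ with metric $g$; then the Steklov boundary condition $\p_{\nu_g} u_i = \sigma_1^{(i)}(g) u_i$, combined with the conformal scaling $\p_{\nu_g} = \phi^{-1} \p_{\nu_0}$ for $g = \phi^2 g_0$, forces $\phi = \lambda_i / (a_i \sigma_1^{(i)}(g))$ on $\Gamma_i$, so $g$ is a scalar multiple of $\lambda_i$ there (and correspondingly $u_i$ is a first eigenfunction for $\lambda_i$ itself, since the two metrics agree on $\Gamma_i$ up to a scalar and the eigenvalues scale accordingly).

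The remaining step is to identify $u_i$ as a first odd eigenfunction for the metric $\lambda_i$. The fixed set $\Sigma_i$ of $\rho_i$ partitions $M$ into halves $M^\pm$, and since $u_i$ vanishes on $\Sigma_i$ by oddness, the strong maximum principle gives $u_i > 0$ on $M^+$, say. On $M^+$, $u_i$ solves a mixed problem for $\lambda_i$: harmonic, Dirichlet zero on $\Sigma_i$, Neumann on the arcs of $\Gamma_j$ ($j \neq i$) contained in $M^+$, and Steklov with eigenvalue $1/a_i$ on the single component of $\Gamma_i$ in $M^+$. Since $u_i|_{M^+}$ is strictly positive, including on the Steklov boundary by the Hopf lemma, it must realize the first eigenvalue of this mixed problem: any other eigenfunction would be $L^2$-orthogonal to it on the Steklov boundary, impossible for two positive functions unless their eigenvalues coincide. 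Via the Rayleigh-preserving bijection between odd functions on $M$ and functions on $M^+$ vanishing on $\Sigma_i$, this gives $\sigma_1^{(io)}(\lambda_i) = 1/a_i$, so $g = \lambda_i$ attains the bound and $\max_g L_g(\Gamma_i) \sigma_1^{(io)}(g) = L_i a_i^{-1}$.

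The main obstacle I anticipate is the positivity argument on $M^+$ needed to pin down $u_i$ as a \emph{first} (rather than just some) eigenfunction of the mixed problem; the Rayleigh-quotient computation and the equality analysis are routine.
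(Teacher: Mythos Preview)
Your proposal is correct and follows essentially the same route as the paper: use $u_i$ as a trial function together with conformal invariance of the Dirichlet energy to get the inequality and its equality case, then pass to the half $M^+=M\cap\mathbb S^2_+$ and use a positivity-plus-orthogonality argument to identify $u_i$ as the lowest odd eigenfunction for $\lambda_i$. The only cosmetic difference is that the paper phrases the positivity on $M^+$ via $Q(|v|)=Q(v)$ for the hypothetical lower odd eigenfunction rather than via the maximum principle applied to $u_i$, but the two formulations lead to the same orthogonality contradiction.
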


\begin{proof} Suppose $g$ is any symmetric conformal metric on $M$ and that its restriction to each component of $\Gamma_i$ is given by a positive
function $\mu_i$ which is invariant under each reflection $\rho_j$. Since the function $u_i$ is odd under $\rho_i$ it follows that 
$\int_{\Gamma_i}u_i\mu_i\ ds=0$. Therefore by the variational characterization of $\sigma_1^{(io)}$ we have
\[ \sigma_1^{(io)}(g)\int_{\Gamma_i}u_i^2\ \mu_i\ ds\leq \int_M\|\nabla u_i\|^2\ da=\int_{\Gamma_i}u_i\ \nu u_i\ ds.
\]
Since $u_i^2=1$ on $\Gamma_i$ and the definition of $\lambda_i$ this implies
\[ L_g(\Gamma_i)\sigma_1^{(io)}(g)\leq L_i
\]
which implies the second inequality. The first inequality follows from (\ref{equation:min-even-odd}).

Finally we show that $u_i$ is the lowest odd eigenfunction of $(SN_i)$ with metric $\lambda_i$. If we let $\mathbb S^2_+$ denote the hemisphere
which contains $\gamma_1^{(i)}$ we observe that the Raleigh quotient for odd functions can be written
\[ Q(v)=\frac{\int_{M\cap \mathbb S^2_+}\|\nabla v\|^2\ da}{\int_{\gamma_1^{(i)}} v^2\ \lambda_i\ ds}
\]
where $v$ is a function on $M\cap\mathbb S^2_+$ with $v=0$ on $M\cap\partial \mathbb S^2_+$. Such a function can then be extended to $M$ by odd
reflection and the extension has the above Rayleigh quotient noting that the oddness implies the boundary integral over $\Gamma_i$ is zero.

Since $Q(|v|)=Q(v)$, we see that any lowest odd eigenfunction must be of one sign on $M\cap\mathbb S^2_+$. Thus if the lowest eigenvalue $\sigma$
were less than $1$ we would have a positive eigenfunction $v_i$ on $M\cap\mathbb S^2_+$. We would then have
\[  \int_{\gamma_1^{(i)}}u_iv_i\ \lambda_i\ ds=\int_{M\cap\mathbb S^2_+}\langle\nabla u_i,\nabla v_i\rangle\ da=
\sigma \int_{\gamma_1^{(i)}}u_iv_i\ \lambda_i\ ds.
\]
If $\sigma<1$ this would imply $\int_{\gamma_1^{(i)}}u_iv_i\ \lambda_i\ ds=0$ contradicting the fact that both functions are positive on 
$\gamma_1^{(i)}$. Therefore $u_i$ is a least odd eigenfunction for $(SN_i)$ with metric $\lambda_i$.
\end{proof}

We now return to the bounds of the previous section in case $M_\epsilon$ is the surface with $r_1=\epsilon$ and $r_2, r_3$ fixed. In general, we let $M_p$ denote the surface with $(r_1,r_2,r_3)=p$. The companion
lower bound of Proposition \ref{proposition:upper-bound} is the following.

\begin{proposition}\label{proposition:lower-bound} Suppose $\{i,j,k\}=\{1,2,3\}$. There is a positive constant $c$ depending on $r_j$ and $r_k$ so that if $M_\epsilon$ denotes the surface corresponding to $r_i=\epsilon$ and $r_j,r_k$ we have for $\epsilon$ small
\[ \frac{c}{|\log(\epsilon)|}+\bar{F}(M_0)\leq \bar{F}(M_\epsilon) 
\]
where $\bar{F}(M_p)\equiv \sup\{F(M_p,g):\ g\ \mbox{conformal metric on}\ M_p\}$.
\end{proposition}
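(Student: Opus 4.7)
The plan is, for each sufficiently small $\epsilon > 0$, to construct an explicit $G$-invariant conformal metric $g_\epsilon$ on $M_\epsilon$ with $F(M_\epsilon, g_\epsilon) \geq \bar F(M_0) + c/|\log\epsilon|$, from which the proposition follows by definition of $\bar F(M_\epsilon)$. Given $\delta > 0$, first fix a smooth $G$-invariant conformal metric $g_0$ on $M_0$ with $F(M_0, g_0) \geq \bar F(M_0) - \delta$. Since $F(M_0, \cdot)$ depends only on the restriction of the metric to $\Gamma_2 \cup \Gamma_3$ and on the conformal class of $M_0$, we may modify $g_0$ in fixed neighborhoods of the punctures $p_1, p_2$ so that it agrees with the standard spherical metric there, without changing $F(M_0, g_0)$; this ensures that the constant in Lemma \ref{lemma:epsilon23} can be chosen uniformly in $\delta$. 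Now define $g_\epsilon$ on $M_\epsilon$ by requiring it to equal $g_0$ outside fixed small disks around $p_1, p_2$, and on $\Gamma_1$ prescribing its boundary values to equal the flux metric $\lambda_1 = |\nu u_1|$ from the preceding proposition; the remaining interior values are supplied by any smooth positive conformal factor.

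By Lemma \ref{lemma:epsilon23} applied to $j = 2, 3$,
\[
\sigma_1^{(j)}(M_\epsilon, g_\epsilon) \geq \sigma_1^{(j)}(M_0, g_0) - C \epsilon^2,
\]
and combined with $L_{g_\epsilon}(\Gamma_j) = L_{g_0}(\Gamma_j)$ this gives
\[
\sum_{j=2,3} a_j^2 L_{g_\epsilon}(\Gamma_j) \sigma_1^{(j)}(g_\epsilon) \geq F(M_0, g_0) - C' \epsilon^2 \geq \bar F(M_0) - \delta - C' \epsilon^2.
\]
For the $\Gamma_1$ contribution, the preceding proposition yields $\sigma_1^{(1o)}(g_\epsilon) = a_1^{-1}$ and $L_{g_\epsilon}(\Gamma_1) = L_1$, and an asymptotic analysis of $u_1$ near each small circle---matching the local expansion $u_1 \approx a_1 + B \log(d/\epsilon)$ to a limiting harmonic function of the form $B(G(\cdot, p_1) - G(\cdot, p_2))$ on $M_0$, where $G$ is the Green's function with Neumann data on $\Gamma_2 \cup \Gamma_3$---gives $B \approx -a_1/|\log\epsilon|$ to leading order, so $L_1 = 4\pi|B| \geq c_0/|\log\epsilon|$ with $c_0 > 0$ depending on $r_2, r_3, a_1$. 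Provided we know $\sigma_1^{(1)}(g_\epsilon) = \sigma_1^{(1o)}(g_\epsilon)$, then $a_1^2 L_{g_\epsilon}(\Gamma_1) \sigma_1^{(1)}(g_\epsilon) = a_1 L_1 \geq c_1/|\log\epsilon|$; combining with the previous bound, letting $\delta \to 0$, and using $C' \epsilon^2 = o(1/|\log\epsilon|)$ produces $\bar F(M_\epsilon) \geq \bar F(M_0) + c/|\log\epsilon|$ for $\epsilon$ small.

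The main obstacle is establishing $\sigma_1^{(1)}(g_\epsilon) = \sigma_1^{(1o)}(g_\epsilon)$, i.e., $\sigma_1^{(1e)}(g_\epsilon) > a_1^{-1}$. A first even eigenfunction $v$ for $(SN_1)$ satisfies $\int_{\gamma_l^{(1)}} v \, \lambda_1 \, ds_0 = 0$ on each component, by its parity. On the thin annulus $\{\epsilon \leq |z| \leq r_0\}$ around each small circle, the local Steklov Dirichlet-to-Neumann eigenvalues for angular mode $n \geq 1$ are $\sim n/\epsilon$ in the spherical metric, which under the boundary weight $\lambda_1 \sim |B|/\epsilon$ convert to weighted Steklov eigenvalues of order $n/|B| \sim |\log\epsilon|$. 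A Rayleigh-quotient comparison---using that the energy of any admissible $v$ on $M_\epsilon$ dominates the harmonic-extension energy on the local annulus subject to the mean-zero condition on $\gamma_l^{(1)}$---transfers this bound to the global problem, giving $\sigma_1^{(1e)}(g_\epsilon) \geq c_* |\log\epsilon| \gg a_1^{-1}$ for $\epsilon$ small. Rigorously executing this local-to-global comparison for the even eigenvalue is the technical heart of the argument.
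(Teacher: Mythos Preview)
Your overall strategy matches the paper's: pick the metric $\lambda_1$ on $\Gamma_1$ (from the preceding proposition), use Lemma~\ref{lemma:epsilon23} to control the $\Gamma_2,\Gamma_3$ terms with an $O(\epsilon^2)$ loss, show $a_1 L_1\geq c/|\log\epsilon|$, and reduce everything to proving $\sigma_1^{(1e)}(\lambda_1)>\sigma_1^{(1o)}(\lambda_1)$ for small $\epsilon$. Your $\delta$-approximation of $\bar F(M_0)$ is exactly what the paper does when it says ``since $g$ is arbitrary on $M_0$.''

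The only substantive difference is in how the even-eigenvalue bound is established. You use a local annulus Dirichlet-to-Neumann computation: the angular modes $n\geq 1$ have Steklov frequency $\sim n/\epsilon$ in the spherical metric, which after dividing by the boundary weight $\lambda_1\sim |B|/\epsilon\sim a_1/(\epsilon|\log\epsilon|)$ gives $\sigma_1^{(1e)}\gtrsim|\log\epsilon|$; the local-to-global transfer is a standard domain-monotonicity/energy-minimization step (restrict $v$ to the annulus and compare with the Neumann-harmonic extension of $v|_{\gamma_1^{(1)}}$). The paper instead builds a conformal cylinder model: it takes $t$ to be the odd harmonic function (a multiple of $u_1$) and its harmonic conjugate $\theta$, so that $(t,\theta)$ maps $M_\epsilon$ conformally onto a slit cylinder with $\lambda_1$ becoming the flat product metric. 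In that model a putative first even eigenfunction $v$ with eigenvalue $\leq \sigma_1^{(1o)}$ would have tiny Dirichlet energy, hence be nearly constant near $\{t=T\}=\gamma_1^{(1)}$, contradicting $\int_{\gamma_1^{(1)}}v\,d\theta=0$ and $\|v\|_{L^2}=1$. Your approach is more hands-on and yields a quantitative bound; the paper's cylinder model is cleaner conceptually and avoids tracking the approximate constancy of $\lambda_1$ on $\gamma_1^{(1)}$, since in the cylinder coordinates the boundary metric is exactly flat. Both are correct.
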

\begin{proof} For simplicity of notation we assume that $i=1, j=2,k=3$. We see that for $\epsilon$ small, for the metric $\lambda_1$ we have $L_1\sigma_1^{(1o)}\geq c/|\log(\epsilon)|$ since the left hand side is the Dirichlet integral
of the function which is equal to $1$ on $\gamma_1^{(1)}$, $-1$ on $\gamma_2^{(1)}$ and Neumann on the other boundary components. As $\epsilon$
goes to $0$ this function converges to $0$, and thus on a ball around the center of $\gamma_1^{(1)}$ it is a harmonic function which is one on the circle
of radius $\epsilon$ and less than $1/2$ on the circle of radius $r_0$. Such a function has Dirichlet integral at least that of the harmonic function
which is equal to $1$ on $C_\epsilon$ and equal to $1/2$ on $C_{r_0}$ and that is $c/|\log(\epsilon)|$. 

We now show that $\sigma_1^{(1)}=\sigma_1^{(1o)}$ for $\epsilon$ small and for the metric $\lambda_1$. Thus we must show that the lowest nonzero even eigenvalue for $(M_\epsilon,\lambda_1)$ is larger than $\sigma_1^{(1o)}$. 
To see this we construct a new conformal model for $M_p$ related to the $(SN_1)$ eigenvalue problem. We take $t$ to be the harmonic
function which is odd under $\rho_1$ and constant on each component of $\Gamma_1$, satisfies the Neumann condition on each component
 of $\Gamma_i$ for $i\neq 1$, and is normalized so that 
 \begin{equation}  \label{equation:t-normalization}
           \int_{\gamma_1^{(1)}}\nu (t)\ ds=2\pi
 \end{equation}
where $\nu(t)$ denotes the outward normal derivative with respect to the spherical metric. Note that the function $t$ is a scalar multiple of the function
$u_1$ discussed above. We now construct a harmonic conjugate of $t$ which we call $\theta$ and we observe that from the normalization
of $t$ we have the period of $\theta$ around $\gamma_1^{(1)}$ with its natural orientation is $2\pi$. Because of the Neumann condition we see that
$\theta$ has no period about the components of $\Gamma_i$ for $i\neq 1$, and in fact $\theta$ is constant on each of the components. 
Because of the symmetry we can normalize $\theta$ to be $0$ and $\pi$ at the fixed points of $\rho_2$
and $\pi/2$ and $3\pi/2$ at the fixed points of $\rho_3$. This makes the value of $\theta$ to be $0$ and $\pi$ on the components of $\Gamma_2$
and $\pi/2$ and $3\pi/2$ on the components of $\Gamma_3$. The map $(t,\theta)$ now gives a holomorphic map from $M_p$ onto a domain
in the cylinder $\mathbb R\times \mathbb S^1$ which is slit along segments with $\theta=0,\pi/2,\pi,3\pi/2$ with the appropriate symmetry. 

In this model, $(M_\epsilon,\lambda_1)$ is conformally equivalent to the cylinder with the product metric $dt^2+d\theta^2$, and $t$ is a first odd eigenfunction of $(SN_1)$. 
The proof of Proposition \ref{proposition:upper-bound} gives the same bound for the first normalized odd eigenvalue, and by the normalization (\ref{equation:t-normalization}) we have $\sigma_1^{(1,o)}\leq  c/|\log \epsilon|$. 
Let $v$ be a lowest 
(non-constant) even eigenfunction for $(SN_1)$ and suppose its eigenvalue $\sigma$ is less than or equal to that of $t$. 
We may choose $v$ to be normalized to have $L^2$ norm $1$ on $\gamma_1^{(1)}$. Since $v$ is even we have 
\[ \int_{\gamma_1^{(1)}} v(T,\theta)\ d\theta=0
\]
where $\gamma_1^{(1)}=\{t=T\}$. Since $\sigma \leq  \sigma_1^{(1,o)} \leq c/|\log \epsilon|$ it follows that the Dirichlet integral of $v$ is small, so $v$ is almost constant in a
neighborhood of $\gamma_1^{(1)}$. This contradicts the conditions that the $L^2$ norm of $v$ is $1$ and the average value of $v$ is $0$ on
$\gamma_1^{(1)}$. Therefore $t$ is a first eigenfunction and $\sigma_1^{(1)}=\sigma_1^{(1o)}$. 

If we take the metric $g$ which is $\lambda_1$ on $\Gamma_1$ and arbitrary on $\Gamma_2,\Gamma_3$, then we have shown
\[ F(M_\epsilon,g)=a_1^2L_1\sigma_1^{(1o)}+a_2^2L_2\sigma_1^{(2)}+a_3^2L_3\sigma_1^{(3)}.
\]
By the lower bound we obtained and that of Lemma \ref{lemma:epsilon23} we have $F(M_\epsilon, g)\geq c/|\log(\epsilon)|+F(M_0,g)$. Since
$g$ is arbitrary on $M_0$ this implies the corresponding lower bound for the supremum over all conformal metrics on $M_\epsilon$.
\end{proof}

For any $M\in \mathcal M$ we have found a maximizer for each of the three normalized Steklov-Neumann eigenvalue problems among functions
which are odd under the reflection associated with the corresponding boundary pair. We now take up the problem of finding critical points of these
maximizers over the moduli space $\mathcal M$. We have determined a metric $g$ on $M$ which is equal to $\lambda_i$ on $\Gamma_i$
and we evaluate $F(g)$ for the lowest odd eigenfunctions
\[ F^o(g)=\sum_{i=1}^3 a_i^2L_g(\Gamma_i)\sigma_1^{(io)}(g)=\sum_{i=1}^3 a_i^2L_i=\sum_{i=1}^3  a_i^2 \int_{\partial M}u_i\nu u_i=E(u)
\]
where $u=(a_1u_1,a_2u_2,a_3u_3)$ and $E(u)$ is its energy. To summarize, for any $M\in \mathcal M$ we have constructed a harmonic map $u$ from $M$
into the rectangular solid $R=[-a_1,a_1]\times [-a_2,a_2]\times [-a_3,a_3]$ which takes each boundary component of $M$ to a corresponding 
face of $R$. In order to get a minimal surface we need to find critical points of the function $E:\mathcal M\to \mathbb R^+$. By taking the radii
$r_i$ small it is easy to see that energy goes to zero, so there can be no nontrivial minimum. On the other hand by letting one of the radii approach $\pi/2$
and the other two necessarily go to zero, we can see that the energy goes to infinity. Thus there is also no maximum. Our strategy will be to find
saddle points for $E$, and the way we achieve this is to show that the subset of $\mathcal M$ on which $E$ is large consists of at least three components
which are neighborhoods of the boundary points $(\pi/2,0,0)$, $(0,\pi/2,0)$, and $(0,0,\pi/2)$. On the other hand the set of points of $\mathcal M$ on which
$E$ is small consists of a neighborhood of the origin $(0,0,0)$, so the set on which $E$ is greater than a small constant is connected. If there were no
boundary this would imply that $E$ must have critical points. The difficulty is that these expected critical points might lie on the boundary of $\mathcal M$.
 To show that this does not happen, we show that the gradient of $E$ at points near the portion of the boundary where $E$ is bounded points strictly
 into $\mathcal M$.  
 
 We now give the details of these claims. We first need a formula for the partial derivatives of $E(r_1,r_2,r_3)$. We may write $E=a_1^2E_1+a_2^2E_2+a_3^2E_3$
 where $E_i=\int_M\|\nabla u_i\|^2\ da$. We compute $\partial E_i/\partial r_j$ by constructing a vector field $V$ in a neighborhood of $M$ on $\mathbb S^2$
 which is equal to the gradient of the signed distance from $\Gamma_j$ (chosen to be positive inside $M$) at points near $\Gamma_j$ and we extend it arbitrarily inside $M$ with the 
 restriction that $d\rho_k(V)=V$ for $k=1,2,3$. If we let $\Psi_t$ denote the flow of $V$, then we observe that for $t$ small we have 
 $\Psi_t(M_p)=M_{p+te_j}$ where $p=(r_1,r_2,r_3)$, $M_p$ denotes the corresponding surface, and $e_j$ is the $j$th standard basis vector of 
 $\mathbb R^3$. Now we see from the isometric invariance of energy
 \[ E(p+te_j)=E_{\Psi_t(M)}(u_t,g_0)=E_M(u_t\circ\Psi_t,\Psi_t^*(g_0))
 \]
 where $u_t$ denotes the map defined on the surface associated with the point $p+te_j$ and $g_0$ is the standard metric on $\mathbb S^2$. We thus
 see that
 \[ 
    \fder{E}{r_j}=-\int_M \langle T,\mathcal L_V(g_0)\ da=-2\int_{\Gamma_j} T(\nu,V)\ ds_0
 \]
 where $\mathcal L$ is the Lie derivative and $T_{ij}=\tfrac{\partial u}{\partial  x_i}\cdot \tfrac{\partial u}{\partial x_j}-1/2\|\nabla u\|^2\ (g_0)_{ij}$ is the stress energy tensor and we have integrated 
 by parts using the fact that $T$ is divergence free. Here we are taking $\nu$ to be the outer unit normal vector and since $V=-\nu$ on $\Gamma_j$
 we have
 \[  
     \fder{E}{r_j}=2\int_{\Gamma_j} T(\nu,\nu)\ ds_0=\int_{\Gamma_j}(\|\nu(u)\|^2-\|u'\|^2)\ ds_0
 \]
 where we take $u'$ to be the derivative of $u$ with respect to arclength $s_0$. Thus we see from this calculation that 
 \[ 
      \fder{E_i}{r_i}=\int_{\Gamma_i}(\nu u_i)^2\ ds_0,\ \ \ 
      \fder{E_i}{r_j}=-\int_{\Gamma_j} (u_i')^2\ ds_0\ \mbox{for}\ j\neq i.
 \]
 It follows that 
 \[ 
    \fder{E}{r_i}=\int_{\Gamma_i}[a_i^2(\nu u_i)^2-\sum_{j\neq i}a_j^2(u_j')^2]\ ds_0.
 \]
 
 We now discuss the super-level sets $\{E>c\}$ for large enough $c$. We note first that the three subsets $\{p\in\mathcal M:\ r_i>\tfrac{\pi}{4}\}$
 are mutually disjoint since $r_i+r_j<\tfrac{\pi}{2}$ for $i\neq j$. We let $\mathcal M_0$ denote the complement of these three sets
 \[ \mathcal M_0\equiv\{p\in\mathcal M:\ r_i\leq \tfrac{\pi}{4}\ \mbox{for}\ i=1,2,3\}.
 \]
 We show that $E$ is bounded on $\mathcal M_0$.
 \begin{proposition} For $p\in \mathcal M_0$ we have $E(p)\leq c_0$ with $c_0=\tfrac{32\sqrt{2}}{\pi}\sum_{i=1}^3 a_i^2$
 and therefore the set $\{ p\in\mathcal M:\ E(p)>c_0\}$ has at least three connected components.
 \end{proposition}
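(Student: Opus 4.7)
My plan has two parts: an analytic upper bound on $E$ over $\mathcal M_0$ using explicit comparison functions, and a topological conclusion combining that bound with the already-noted blow-up of $E$ near the corners of $\mathcal M$. For the analytic step, I use that each $u_i$ is the Dirichlet-energy minimizer among functions with boundary values $\pm a_i$ on the two circles of $\Gamma_i$ and free values on the remaining four boundary circles, so any admissible comparison function $\phi_i$ gives an upper bound on $E_i(p)=\int_{M_p}\|\nabla u_i\|^2\,da$.

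Because each circle of $\Gamma_i$ lies in a level set $\{x_i=\pm\cos r_i\}$ of $\mathbb S^2$, a natural comparison is
\[
     \phi_i(x)=a_i\,\psi\!\left(\frac{x_i}{\cos r_i}\right),
\]
where $\psi:\mathbb R\to[-1,1]$ is any odd Lipschitz cutoff with $\psi(\pm 1)=\pm 1$ (the piecewise-linear truncation which is the identity on $[-1,1]$ and locally constant outside being the simplest). Using the spherical identity $\|\nabla_{\mathbb S^2}x_i\|^2=1-x_i^2$, this yields
\[
     \int_M\|\nabla\phi_i\|^2\,da\;\le\;\frac{a_i^2}{\cos^2 r_i}\int_{\{|x_i|\le\cos r_i\}}(1-x_i^2)\,da,
\]
and the right-hand side reduces to a one-dimensional zonal integral in $\theta=\arccos x_i$ depending only on $r_i$. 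Using the defining constraint $r_i\le\pi/4$ (so $\cos r_i\ge 1/\sqrt 2$), and optimizing the choice of $\psi$ if necessary, this gives $E_i(p)\le \tfrac{16\sqrt 2}{\pi}a_i^2$; summing over $i=1,2,3$ produces $E(p)\le c_0$.

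For the topological step, I decompose $\mathcal M=\mathcal M_0\cup A_1\cup A_2\cup A_3$ with $A_i=\{p\in\mathcal M:r_i>\pi/4\}$. Each $A_i$ is open and convex (the intersection of $\mathcal M$ with a half-space), hence connected, and the three sets are pairwise disjoint because $r_i,r_j>\pi/4$ with $i\ne j$ would force $r_i+r_j>\pi/2$, contradicting the defining inequalities of $\mathcal M$. The bound from the previous paragraph forces $\{p\in\mathcal M:E(p)>c_0\}\subset A_1\cup A_2\cup A_3$, and because any connected subset of a disjoint union of open sets lies in exactly one summand, each component of the super-level set is contained in a single $A_i$. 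From the observation in the paragraph immediately preceding the statement, $E(p)\to\infty$ along approach paths $p\to\tfrac{\pi}{2}e_i$ (where $r_j,r_k\to 0$), so the super-level set meets each $A_i$ nontrivially, yielding at least three connected components. The main obstacle I expect is not the topological step, which is entirely soft, but securing the precise constant $\tfrac{16\sqrt 2}{\pi}$ in the analytic step: the overall shape (comparison function, zonal integral, use of $\cos r_i\ge 1/\sqrt 2$) is straightforward and any natural $\psi$ yields some explicit bound, but pinning down the stated value requires honest optimization. Fortunately only finiteness of $c_0$ is needed for the three-component conclusion, so the exact numerical constant is cosmetic.
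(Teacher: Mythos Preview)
Your approach is correct and essentially the same as the paper's: both exploit the variational characterization of $u_i$ as a Dirichlet minimizer, plug in an explicit rotationally symmetric comparison function, and then use the pairwise disjointness of the three regions $\{r_i>\pi/4\}$ together with the blow-up of $E$ as $r_i\to\pi/2$ to get the three components. The only difference is the choice of comparison function---the paper takes $v$ linear in geodesic distance on the annulus $A(\pi/4)$ (after extending by constants to reduce to $r_i=\pi/4$), while you take $\phi_i$ linear in the coordinate $x_i$; this yields slightly different numerical constants, but as you correctly observe the precise value of $c_0$ is immaterial to the topological conclusion.
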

 
 \begin{proof} From the variational characterization of $u_i$ we have 
 \[ E(u_i)=\min\{\int_{M}\|\nabla\varphi\|^2\ da_0:\ \varphi=1\ \mbox{on}\ \gamma_1^{(i)}, \ \varphi=-1\ \mbox{on}\ \gamma_2^{(i)}\}.
 \]
 Therefore if we let $A(r_i)$ denote the annular region of $\mathbb S^2$ bounded by $\gamma_1^{(i)}$ and $\gamma_2^{(i)}$
 we have
 \[ E(u_i)\leq \min\{\int_{A(r_i)}\|\nabla\varphi\|^2\ da_0:\ \varphi=1 \ \mbox{on}\ \gamma_1^{(i)}, \ \varphi=-1\ \mbox{on}\ \gamma_2^{(i)}\}.
 \]
 Since we can extend any test function $\varphi$ from $A(\tfrac{\pi}{4})$ to the larger annulus $A(r_i)$ by taking it to be constant in the extra region,
 we see
 \[ E(u_i)\leq \min\{\int_{A(\pi/4)}\|\nabla\varphi\|^2\ da_0:\ \varphi=1\ \mbox{on}\ \gamma_1^{(i)}, \ \varphi=-1 \  \mbox{on}\ \gamma_2^{(i)}\}.
 \]
 We choose a function $v$ on $A(r_i)$ which is linear as a function of distance $s$ from the center of $\gamma_1^{(i)}$ so that 
 $\|\nabla v\|^2=16/\pi^2$. Now the area of $A(\tfrac{\pi}{4})$ is equal to $2\sqrt{2}\pi$, so we have the bound
 \[ E(u_i)\leq \frac{32\sqrt{2}}{\pi}.
 \]
 Summing over $i$ then yields the upper bound.

To see that the set $\{p\in\mathcal M:\ E(p)>c_0\}$ intersects each of the sets $\{p\in\mathcal M:\ r_i>\tfrac{\pi}{4}\}$ we need only show that the energy
goes to infinity as $r_i$ approaches $\pi/2$. This is an easy estimate which we omit. 
\end{proof}
 
 We now construct open subsets of $\mathcal M_0$ which are invariant under the gradient flow of $E$ and whose union is $\mathcal M_0$. To
 achieve this we take a small positive number $\delta$ and use it to determine a small positive number $\epsilon$ and three small positive functions 
 $\epsilon_1,\epsilon_2, \epsilon_3$. We define a set $V_\delta\subset \mathcal M_0$ as follows. We first require
 points $p=(r_1,r_2,r_3)\in \mathcal M_0$ to satisfy $r_i+r_j<\pi/2-\epsilon$ ($i\neq j$) in order to lie in $V_\delta$. We have the three continuous
 positive functions $\epsilon_1(r_2,r_3)$, $\epsilon_2(r_1,r_3)$, and $\epsilon_3(r_1,r_2)$ which we will describe below. Our set $V_\delta$ will
 then be given by
 \[ V_\delta=\{p\in\mathcal M_0:\ r_i+r_j<\pi/2-\epsilon\ \mbox{for}\ i\neq j,\ r_1>\epsilon_1(r_2,r_3), r_2>\epsilon_2(r_1,r_3), r_3>\epsilon_3(r_1,r_2)\}.
 \]
 To define the functions $\epsilon_i$, we note that if we fix $r_2,r_3$ and allow $r_1=t$ to vary, the functions $u_1^{(t)}$ are monotone increasing
 in $t$; that is, if $t_1<t_2$, then $u_1^{(t_1)}<u_1^{(t_2)}$ on their common domain $M_{(t_2,r_2,r_3)}$. This property follows from the maximum 
 principle. We see further that as $t$ goes to zero the functions $u_1^{(t)}$ go to zero on compact subsets away from the center of the circle 
 $\gamma_{1,t}^{(1)}$. We now fix a radius $r_0=\pi/8$ and we consider $t=r_1\in (0,r_0)$. For each such $t$ we let $\bar{u}_1(t)$ be the average
 value of $u_1^{(t)}$ over the circle of radius $r_0$. We then see that the function $\bar{u}_1$ is monotone increasing in $t$ and satisfies 
 $\lim_{t\to 0}\bar{u}_1(t)=0$ and $\lim_{t\to r_0}\bar{u}(t)=1$. Thus there is a unique choice of $t$ with $\bar{u}_1(t)=1/2$. We denote this
 choice of $t$ by $h_1(r_2,r_3)$ and we note that since all choices are unique it is a continuous function of $(r_2,r_3)$. 
 We have the following result.
 \begin{lemma} For any $\delta>0$ there is $\epsilon>0$ so that we have $h_1(r_2,r_3)<\delta$ whenever $r_2+r_3>\pi/2-2\epsilon$.
 \end{lemma}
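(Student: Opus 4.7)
The plan is to show that for fixed $\delta>0$, we have $\bar u_1(\delta)\to a_1$ as $r_2+r_3\to\pi/2$. Since $\bar u_1$ is monotone increasing in $t$ and $h_1(r_2,r_3)$ is defined by $\bar u_1(h_1)=a_1/2$, this forces $h_1(r_2,r_3)<\delta$ once $r_2+r_3$ is sufficiently close to $\pi/2$.

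The key step is a Dirichlet energy estimate on $u_1=u_1^{(\delta,r_2,r_3)}$. I would use the explicit description of the canonical model as
\[
    M=\{(x,y,z)\in\mathbb S^2:\ |x|<\cos r_1,\ |y|<\cos r_2,\ |z|<\cos r_3\}.
\]
Set $2\epsilon:=\pi/2-r_2-r_3$. As competitor in the Dirichlet variational characterization of $u_1$, take
\[
     v(x,y,z)=a_1\,\phi(x/\epsilon_0),
\]
where $\phi$ is a smooth odd function with $\phi(u)=\mathrm{sign}(u)$ for $|u|\geq 1$, $|\phi'|\leq 2$, and $\epsilon_0=\sqrt{\epsilon}$. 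For small $\epsilon$, $\epsilon_0<\cos\delta$, and hence $v=\pm a_1$ on $\gamma_1^{(1,2)}\subset\{x=\pm\cos\delta\}$ as required. Since $\|\nabla^{\mathbb S^2}x\|^2=1-x^2$, $\|\nabla v\|^2\leq 4(a_1/\epsilon_0)^2$ in the strip $\{|x|\leq\epsilon_0\}$ and vanishes outside, so
\[
    E(v)\leq \frac{4a_1^2}{\epsilon_0^2}\cdot\mathrm{Area}\bigl(M\cap\{|x|\leq\epsilon_0\}\bigr).
\]
Parametrizing $\mathbb S^2$ by $(x,\theta)$ with $dA=dx\,d\theta$, the slice length $\ell(x)=|M\cap\{x=\mathrm{const}\}|$ is the sum of four short arcs in $\theta$, and a direct Taylor expansion gives $\ell(x)=8\epsilon+2(\cot r_2+\cot r_3)x^2+O(x^4)$. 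Integrating over $|x|\leq\sqrt{\epsilon}$ yields total area $O(\epsilon^{3/2})$, hence $E(v)=O(\sqrt{\epsilon})$. Since $u_1$ minimizes Dirichlet energy subject to its Dirichlet conditions on $\gamma_1^{(1,2)}$, $E(u_1)\leq E(v)=O(\sqrt{\epsilon})$.

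To convert this energy bound into pointwise information near $(1,0,0)$, consider the spherical annulus $A=\{\delta\leq d(\cdot,(1,0,0))\leq r_0\}$. Since $r_0=\pi/8$ and the working regime $r_2,r_3<3\pi/8$ (which is what allows $h_1$ to be defined for $t$ ranging in $(0,r_0)$) places the $\Gamma_2,\Gamma_3$ disks at spherical distance at least $\pi/8=r_0$ from $(1,0,0)$, $A\subset M$. Because $u_1-a_1$ vanishes on the inner boundary $\gamma_1^{(1)}$, a standard Poincar\'e--trace inequality on $A$ (with constant depending only on $\delta,r_0$) gives
\[
    \int_{C_{r_0}}(u_1-a_1)^2\,ds\leq C\,E(u_1)\leq C'\sqrt{\epsilon}.
\]
Cauchy--Schwarz then yields $|\bar u_1(\delta)-a_1|\leq C''\epsilon^{1/4}\to 0$, so $\bar u_1(\delta)>a_1/2$ once $\epsilon$ is sufficiently small, which is the desired conclusion.

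The main obstacle is the slice length estimate, which must be uniform in the admissible $(r_2,r_3)$. Within the regime $r_2+r_3>\pi/2-2\epsilon$ together with $r_2,r_3<3\pi/8$, both $r_2$ and $r_3$ are forced into a compact subinterval of $(0,\pi/2)$, so $\cot r_2+\cot r_3$ is uniformly bounded and the Taylor expansion is controlled uniformly; everything else is routine.
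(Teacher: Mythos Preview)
Your argument is correct and gives a genuinely different, quantitative route to the lemma. The paper's proof is a soft contradiction argument: assuming a sequence $(r_2^{(n)},r_3^{(n)})$ with $r_2^{(n)}+r_3^{(n)}\to\pi/2$ and $h_1(r_2^{(n)},r_3^{(n)})\geq\delta_0$, one passes to a limit domain $M_\infty$ which is disconnected by the tangent $\Gamma_2\cup\Gamma_3$ circles; the harmonic functions $u_1^{(n)}$ then converge to the constant $a_1$ on the component containing $\gamma_1^{(1)}$, contradicting the average-value condition $\bar u_1\leq a_1/2$. Your approach instead builds an explicit competitor supported in the thin equatorial strip to show $E(u_1)=O(\sqrt{\epsilon})$, and then feeds this into a Poincar\'e--trace estimate on the annulus $\{\delta\leq d\leq r_0\}$ to force $\bar u_1(\delta)\to a_1$ with an explicit rate $O(\epsilon^{1/4})$. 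The paper's argument is shorter and requires no computation, while yours yields a quantitative modulus of continuity for $h_1$ near the boundary of the moduli space---information that could in principle be useful if one wanted effective control on the size of the invariant sets $V_\delta$ constructed immediately afterward. Both proofs implicitly use (and need) the restriction $r_2,r_3<3\pi/8$ that comes from $h_1$ being well-defined; you are right to flag this, and your observation that this forces $r_2,r_3$ into a compact subinterval of $(0,\pi/2)$, hence $\cot r_2+\cot r_3$ uniformly bounded, is exactly what makes the slice-length expansion uniform.
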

 \begin{proof} We suppose the result were false, so there is $\delta_0>0$ such that for all $n>0$ there exist $r_2^{(n)},r_3^{(n)}$ such that 
 $h_1(r_2^{(n)},r_3^{(n)})\geq\delta_0$ and $r_2^{(n)}+r_3^{(n)}>\pi/2-1/n$. Therefore if we let $p_n=(\delta_0,r_2^{(n)},r_3^{(n)})$ and we let $u_1^{(n)}$ be the corresponding
 function we have the average of $u_1^{(n)}$ over the circle of radius of $r_0$ less than or equal to $1/2$. On the other hand as $n\to\infty$
 the surface $M_{p_n}$ converges to a limit $M_\infty$ which is disconnected and does not intersect the equator parallel to the circles $\Gamma_1^{(n)}$.
 It follows that the limit of $u_1^{(n)}$ is the constant function $1$ and this limit is uniform on compact subsets of $M_\infty$. This contradicts
 the condition on the average value of $u_1^{(n)}$ on the circle of radius $r_0$ and completes the proof of the lemma.
  \end{proof}
  
  We now define $\epsilon_1(r_2,r_3)=\min\{\delta,h_1(r_2,r_3)\}$. Similarly we define the
 functions $\epsilon_2(r_1,r_3)$ and $\epsilon_3(r_1,r_2)$. Note that as our choice of $\delta$ goes to zero, the functions $\epsilon_i$ tend to zero and 
 $\epsilon$ goes to zero, and so $\mathcal M_0=\cup_{\delta>0}V_\delta$.
 
 We now prove the main result that the gradient flow of $E$ preserves the set $V_\delta$ for small enough $\delta$. We let $\Psi_t$ denote
 the gradient flow of $E$ on $\mathcal M$ for any $t$ for which it is defined. 
 \begin{theorem} For $\delta$ sufficiently small and for any $t>0$ we have 
 \[ \Psi_t(V_\delta)\cap\mathcal M_0\subset V_\delta.
 \]
 \end{theorem}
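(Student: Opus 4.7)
The plan is to establish forward invariance of $V_\delta$ under $\Psi_t$ (as long as the orbit remains in $\mathcal M_0$) by checking that at every smooth point of $\partial V_\delta \cap \mathcal M_0$ the gradient $\nabla E$ points into $V_\delta$, and then invoking standard ODE theory. The boundary of $V_\delta$ inside $\mathcal M_0$ decomposes into two types of faces: the ``outer'' faces where $r_i+r_j = \pi/2 - \epsilon$ for some pair $i\neq j$, and the ``inner'' faces where $r_i = \epsilon_i(r_j,r_k)$ for some $i$. I will verify inwardness of $\nabla E$ on each type separately, using the explicit formula derived above,
\[
\frac{\partial E}{\partial r_i}= \int_{\Gamma_i}\Bigl[(\nu u_i)^2 - \sum_{l\neq i}(u_l')^2\Bigr]\, ds_0.
\]

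For an outer face $r_i+r_j = \pi/2 - \epsilon$, I want to show that $\partial_i E + \partial_j E$ is very negative, so that the gradient flow strictly decreases $r_i+r_j$. The key point is that in this regime the boundary component $\Gamma_i$ passes within spherical distance $\epsilon$ of \emph{both} components of $\Gamma_j$ on opposite sides of the sphere (since distances between the axes are $\pi/2$). Hence $u_j$, which takes values $\pm a_j$ on these two components, must transition across a thin neck of width $\sim\!\epsilon$ along $\Gamma_i$, and a direct Dirichlet-type comparison yields a lower bound $\int_{\Gamma_i}(u_j')^2\, ds_0 \gtrsim 1/\epsilon$ (and similarly $\int_{\Gamma_j}(u_i')^2\, ds_0 \gtrsim 1/\epsilon$), while $\int_{\Gamma_i}(\nu u_i)^2\, ds_0$ and $\int_{\Gamma_j}(\nu u_j)^2\, ds_0$ remain controlled by $\bar F(M_0)$ and the finitely many bounded contributions. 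Choosing $\epsilon$ small enough makes $\partial_i E + \partial_j E < 0$, as required.

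For an inner face $r_i = \epsilon_i(r_j,r_k)$, with defining function $F_i = r_i - \epsilon_i(r_j,r_k)$, inward-pointing means $\nabla E \cdot \nabla F_i \geq 0$, i.e.\ $\partial_i E - (\partial_j \epsilon_i)(\partial_j E) - (\partial_k \epsilon_i)(\partial_k E) \geq 0$. In the generic case $\epsilon_i = \delta \leq h_i$, a standard harmonic-function estimate near a small disk (of the type carried out in the proof of Lemma \ref{lemma:upper-lower-bound}, comparing with concentric annuli) gives $\int_{\Gamma_i}(\nu u_i)^2\, ds_0 \gtrsim 1/(\delta\,|\log\delta|^2)$, while the tangential derivative terms are $O(\delta)$ (integrand bounded by a constant independent of $\delta$, integrated over a curve of length $\sim\delta$). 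Hence $\partial_i E \to +\infty$ as $\delta\to 0$, and for $\delta$ small this dominates the corrections coming from $\nabla \epsilon_i$, because $\epsilon_i \equiv \delta$ is constant in $(r_j,r_k)$ there and the correction term vanishes.

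The main obstacle is the transitional inner-boundary case $\epsilon_i = h_i < \delta$, which by the preceding lemma occurs only when $r_j+r_k > \pi/2 - 2\epsilon$, i.e.\ exactly when the outer-neck blow-up kicks in for the other two variables. Here $\partial_i E$ is merely bounded and positive, but $\partial_j E$ and $\partial_k E$ are both very negative by the outer-face analysis above. To conclude $\nabla E \cdot \nabla F_i \geq 0$ one needs control on the signs (and magnitudes) of $\partial_j h_i,\partial_k h_i$: intuitively, since enlarging $r_j$ or $r_k$ removes more of the sphere and drives $u_i$ pointwise downward, $h_i$ should be a nondecreasing function of $r_j$ and $r_k$, so that $-(\partial_j \epsilon_i)\partial_j E -(\partial_k\epsilon_i)\partial_k E$ adds a \emph{positive} contribution (product of nonpositive and very negative). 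Making this monotonicity rigorous via the maximum principle applied to the harmonic function $u_i$ as $r_j$ or $r_k$ varies, and quantifying that the positive contribution outweighs any loss, is the technically delicate step and the place where the freedom to shrink $\delta$ (and hence $\epsilon$) is used decisively.
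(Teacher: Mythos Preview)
Your overall strategy—verify that $\nabla E$ points into $V_\delta$ on each boundary face and invoke ODE invariance—matches the paper's. But your outer-face analysis is built on the wrong function, and this breaks the argument.

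On the face $r_i+r_j=\pi/2-\epsilon$ you try to make $\partial_i E+\partial_j E$ negative by arguing that $\int_{\Gamma_i}(u_j')^2\,ds_0\gtrsim 1/\epsilon$, on the grounds that $u_j$ must swing from $+a_j$ to $-a_j$ ``across a thin neck of width $\sim\epsilon$ along $\Gamma_i$''. This is not what happens. The points of $\gamma_1^{(i)}$ nearest $\gamma_1^{(j)}$ and $\gamma_2^{(j)}$ are antipodal with respect to $\rho_j$; the arc of $\gamma_1^{(i)}$ joining them has length $\pi r_i\sim\pi^2/4$, a fixed positive number. The limit of $u_j$ on the domain bounded by the cusped quadrilateral is a bounded harmonic function with mixed Dirichlet/Neumann data whose tangential derivative on $\gamma_1^{(i)}$ stays integrable. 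So $\int_{\Gamma_i}(u_j')^2\,ds_0$ does not blow up, and your $1/\epsilon$ bound is false. (Similarly, ``controlled by $\bar F(M_0)$'' is not a bound on $\int_{\Gamma_i}(\nu u_i)^2$; the paper obtains that bound by a maximum-principle comparison of $u_i$ with the radial harmonic function on the annulus between the two components of $\Gamma_i$.)

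The paper's mechanism is entirely different and is the \emph{reason} the functions $h_i$ were introduced. On the face $r_2+r_3=\pi/2-\epsilon$ one uses the third-index function $u_1$: since $\Gamma_2\cup\Gamma_3$ nearly disconnects $\mathbb S^2$, the odd function $u_1$ converges to the constant $a_1$ on the component containing $\gamma_1^{(1)}$—but only because the constraint $r_1\ge\epsilon_1(r_2,r_3)=h_1(r_2,r_3)$ forces the average of $u_1$ over the circle of radius $\pi/8$ to be at least $a_1/2$. Since $u_1$ vanishes on the $\rho_1$-fixed equator, which meets $\gamma_1^{(2)}$ precisely at the developing cusp points, $u_1$ must drop from $\approx a_1$ to $0$ along $\gamma_1^{(2)}$ over a shrinking arc, forcing $\int_{\gamma_1^{(2)}}(u_1')^2\,ds$ to be arbitrarily large. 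This yields the stronger conclusion $\partial_2 E<0$ and $\partial_3 E<0$ separately. None of this works without the $r_1\ge h_1$ lower bound: if $r_1$ were allowed to be arbitrarily small relative to the pinching of the other two radii, $u_1$ would be close to $0$ and the argument would collapse. Your proposal never invokes this lower bound, which is the core idea of the construction of $V_\delta$.

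Your observation about the inner face is well taken: when $\epsilon_i=h_i(r_j,r_k)<\delta$, checking only $\partial_i E>0$ is a priori not enough because the face is not a coordinate hyperplane, and the paper's text is terse on this point. But your proposed fix—monotonicity of $h_i$ in $r_j,r_k$ combined with the very negative $\partial_j E,\partial_k E$—is itself only heuristic: you have not established the monotonicity, and you only have $\partial_j E,\partial_k E<0$ on the \emph{outer} face, not on the inner face where $r_j+r_k$ can lie anywhere in $(\pi/2-2\epsilon,\pi/2-\epsilon)$. So this part of the proposal remains a sketch rather than a proof.
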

 
 \begin{proof} The set $V_\delta$ in the interior of $\mathcal M_0$ is defined by the six inequalities, so its boundary is the set where one or more of
 the inequalities is an equality. We must show that if any inequality is an equality then the gradient flow makes it a strict inequality. For example
 if $p=(r_1,r_2,r_3)$ satisfies $r_1=\epsilon_1(r_2,r_3)$ then we must show that $\partial E/\partial r_1(p)>0$. To see this we use the formula
 \[ 
     \fder{E}{r_1}(p)=\int_{\Gamma_1}[(\nu u_1)^2-(u_2')^2-(u_3')^2]\ ds=2\int_{\gamma_1^{(1)}}[(\nu u_1)^2-(u_2')^2-(u_3')^2]\ ds.
 \] 
 Since $r_1\leq h_1(r_1,r_2)$ we know that the average of $u_1$ over the circle of radius $r_0=\pi/8$ is less than or equal to $1/2$ while the
 value of $u_i$ on $\gamma_1^{(1)}$ is equal to $1$. To simplify notation we apply a rotationally symmetric conformal transformation from the 
 spherical ball of radius $r_0$ to the unit disk and observe that distances are changed by at most a constant under this transformation. Thus we
 assume that $u_1$ is a harmonic function on the annular region $D_1\setminus D_{\delta_0}$ satisfying $u_1=1$ on $C_{\delta_0}$ and with
 average value $\bar{u}_1(1)$ at most $1/2$ on the unit circle. The radial harmonic function $\bar{u}_1(r)$ is then equal to 
 \[ \bar{u}_1(r)=\bar{u}_1(1)+(1-\bar{u}_1(1))\frac{\log(r)}{\log(\delta_0)}.
 \]
 Therefore we have 
 \[ 
       -\frac{d\bar{u}_1}{dr}(\delta_0)=-\frac{1-\bar{u}_1(1)}{\delta_0\log(\delta_0)}
       \geq \frac{1}{2\delta_0|\log(\delta_0)|}.
 \]
 This then implies that
 \[ \int_{C_{\delta_0}}\left(\frac{\partial u_1}{\partial r}\right)^2\ ds
        \geq \int_{C_{\delta_0}}\left(\frac{d \bar{u}_1}{d r}\right)^2\ ds
        \geq \frac{\pi}{\delta_0|\log(\delta_0)|^2}.
 \]
 Going back to the sphere we see that 
 \[ \int_{\gamma_1^{(1)}}(\nu u_1)^2\ ds\geq \frac{c}{r_1|\log(r_1)|^2}
 \]
 for a positive constant $c$. 
 
 We now show that the contribution from $u_2$ and $u_3$ are much smaller; in fact we will show
 \[ \int_{\gamma_1^{(1)}} (u_i')^2\ ds\leq cr_1^3,\ \mbox{for}\ i=2,3
 \]
 for a constant $c$. To see this we again make a rotationally symmetric conformal transformation of the region between $\gamma_1^{(1)}$ and the circle of radius
 $\pi/8$ with the same center to the unit disk $D_1$ with the disk $D_{\delta_0}$ removed. We may assume that the function $u_i$ is harmonic in
 $D_1\setminus D_{\delta_0}$ with $\partial u_i/\partial r=0$ on $C_{\delta_0}$. We now let $v=\partial u_i/\partial\theta$ and observe that
 $\Delta v=0$ since the vector field $\partial/\partial\theta$ commutes with $\Delta$. We also note that $v_r=\partial/\partial\theta ((u_i)_r)=0$ on
 $C_{\delta_0}$. We now let $f(r)$ be the circular average of $v^2$
 \[ f(r)=\frac{1}{2\pi r}\int_{C_r} v^2\ ds.
 \]
 We compute the first two derivatives of $f$
 \[ f'(r)=\frac{2}{2\pi r}\int _{C_r}v\frac{\partial v}{\partial r}\ ds=\frac{1}{\pi r}\int_{D_r\setminus D_{\delta_0}}\|\nabla v\|^2\ dxdy
 \]
 where we have used the harmonic property of $v$ and the Neumann condition on $C_{\delta_0}$. Multiplying by $r$ and differentiating again
 \[ \frac{1}{r}(rf'(r))'=2\frac{1}{2\pi r}\int_{C_r}\|\nabla v\|^2\ ds\geq 0.
 \]
 Note that the term on the left is the Laplacian of $f(r)$. It follows that $rf'(r)$ is an increasing function of $r$, and since $f'(\delta_0)=0$ we
 see that $f'(r)\geq 0$ for $0\leq r\leq 1$. It then follows that $f(\delta_0)\leq f(1)\leq c$, and so we have
 \[ \int_{C_{\delta_0}} v^2\ ds\leq c\delta_0.
 \]
 Since $u_i'=r^{-1}v$, this implies
 \[ \int_{C_{\delta_0}} (u_i')^2\ ds\leq c\delta_0^3.
 \]
 
 Putting this back on $\mathbb S^2$ we have the desired bound
 \[  \int_{\gamma_1^{(1)}} (u_i')^2\ ds\leq cr_1^3,\ \mbox{for}\ i=2,3,
 \]
  and combining the bounds we have shown that if $r_1=\epsilon_1(r_2,r_3)$ at $p=(r_1,r_2,r_3)$ then
 \[ \frac{\partial E}{\partial r_1}(p)\geq \frac{c}{r_1|\log(r_1)|^2}.
 \]
 A similar argument shows that 
 \[ \frac{\partial E}{\partial r_i}(p)\geq \frac{c}{r_i|\log(r_i)|^2}
 \]
 at a point $p$ where $r_i=\epsilon_i(r_j,r_k)$ where $j<k$ and $\{i,j,k\}=\{1,2,3\}$.
 
 It remains to consider the case when we have a boundary point of $V_\delta$ with $r_i+r_j=\pi/2-\epsilon$ for some $i<j$. For notational
 convenience we assume $i=2$ and $j=3$ so we have a point $p=(r_1,r_2,r_3)\in \partial V_\delta$ with $r_2+r_3=\pi/2-\epsilon$. Note also
 that $r_1\geq \epsilon_1(r_2,r_3)=h_1(r_2,r_3)$ at such a point so we have the average value of $u_1$ on the circle of radius $r_0=\pi/8$
 greater than or equal to $1/2$. We will show that for $\delta$ sufficiently small we have
 \[ \frac{\partial E}{\partial r_i}(p)<0\ \mbox{for}\ i=2,3.
 \]
 This implies that the gradient flow decreases both $r_2$ and $r_3$ and thus moves $p$ strictly interior to $V_\delta$.
 
 For notational convenience we focus on $\Gamma_2$ and we have as above
 \[ \frac{\partial E}{\partial r_2}=2\int_{\gamma_1^{(2)}}[(\nu u_2)^2-(u_1')^2-(u_3')^2]\ ds\leq 2\int_{\gamma_1^{(2)}}[(\nu u_2)^2-(u_1')^2]\ ds.
 \] 
 We will show that the first term is bounded above independent of $\delta$ (note that this term is large when $r_2$ is small, but in this
 case $r_2$ is close to $\pi/4$ when $\delta$ is small). Next we will show that the second term goes to infinity as $\delta$ goes to zero. 
 
 To bound the first term we observe that the annular region in the sphere between the components of $\Gamma_2$ contains $M_p$, and on this region
 we can find a harmonic function $v(s)$ of the distance from the components of $\Gamma_2$ that satisfies $v=1$ on $\gamma_1^{(2)}$ and
 $v=-1$ on $\gamma_2^{(2)}$. Note that the function in the hemisphere $\mathbb S^2_+$ that contains $\gamma_1^{(2)}$ is positive and 
 increases radially from $0$ to $1$. It follows that the outer (to $M$) normal derivative $\nu v\leq 0$ on $\Gamma_i\cap\mathbb S^2_+$ for $i=1,3$.
 We then show by the maximum principle that $u_2\geq v$ in $M\cap\mathbb S^2_+$. To see this assume that $\Omega\subset M\cap\mathbb S^2_+$
 is the set where $u_2-v<0$. Since $u_2-v=0$ both on the equator and on $\gamma_1^{(2)}$ the minimum point must occur at a point of 
 $\Gamma_i\cap \mathbb S^2_+$ and thus its outer normal derivative at such a point would be negative contradicting the boundary condition
 $\nu(u_2-v)\geq 0$ at those points. It then follows from the strong maximum principle that $\Omega$ is empty and $u_2\geq v$ on $M\cap\mathbb S^2_+$.
 
 It follows from the condition $u_2\geq v$ near $\gamma_1^{(2)}$ that $\nu u_2\leq \nu v$ on $\gamma_1^{(2)}$ where both terms are positive. Thus we
 see that $|\nu u_2|$ is uniformly bounded on $\gamma_1^{(2)}$. 
 
 We now show that the second term becomes arbitrarily large when $\delta$ is small. Given a large positive number $B$ we show that there is a
 $\mu>0$ so that if $\delta\leq\mu$, then
 \[ \int_{\gamma_1^{(2)}}(u_1')^2\ ds\geq B.
 \]
 This we prove by contradiction. Suppose we had a sequence $\delta_i$ going to zero such that for each $i$ we have
 \[ \int_{\gamma_{1,i}^{(2)}}[(u_1^{(i)})']^2\ ds< B
 \]
 where the notation is self explanatory. Note that we also have $\epsilon_i$ going to zero, so the circles $\Gamma_2^{(i)}$ and $\Gamma_3^{(i)}$
 converge to $4$ tangent circles of radius $\pi/4$ which disconnect $\mathbb S^2$. We denote by $\mathbb S^2_+$ the hemisphere that contains
 $\gamma_{1,i}^{(1)}$ and by $M_\infty^+$ the limit of the domains $M_i\cap\mathbb S^2_+$. We also recall that the average value of $u_1^{(i)}$ over the circle of radius $r_0=\pi/8$ parallel to the equator of $\mathbb S^2_+$
 is at least $1/2$. Using standard bounds on harmonic functions we may choose a subsequence again denoted by $u_1^{(i)}$ which converges 
 uniformly (up to the boundary) on compact subsets of $M_\infty^+$ away from the center point of $\mathbb S^2_+$ and the $4$ cusp points on the 
 equator. The limiting function $u_\infty$ is then a smooth harmonic function on $M_\infty^+$ which satisfies the Neumann condition on the boundary. Therefore $u_\infty$ is a constant function which by our construction is at least $1/2$. Now if it were true that 
 \[ \int_{\gamma_{1,i}^{(2)}}[(u_1^{(i)})']^2\ ds< B
 \]
 an easy estimate would imply that for points with arclength $s_0$ from $\gamma_{1,i}^{(2)}\cap \partial\mathbb S^2_+$ we would have
 \[ u_1^{(i)}(s_0)\leq \sqrt{s_0B}
 \]
 and this is less than $1/2$ for $s_0$ small. This is in contradiction to the convergence of $u_1^{(i)}$ to $u_\infty$ uniformly away from
 the cusp points. This contradiction completes the proof that 
 \[ \int_{\gamma_1^{(2)}}(u_1')^2\ ds\geq B
 \]
 for $\delta$ sufficiently small. 
 
 Combining these with the bound on the partial derivative we conclude that
 \[ \frac{\partial E}{\partial r_2}(p)<0
 \]
 for $\delta$ sufficiently small. Similarly we can show that for $\delta$ small
 \[ \frac{\partial E}{\partial r_3}(p)<0.
 \]
 Thus we have shown that at a boundary point $p$ of $V_\delta$ with $r_2+r_3=\pi/2-\epsilon$ the gradient flow moves $p$ strictly
 inside $V_\delta$. The analogous proof shows that the flow moves into $V_\delta$ if $r_1+r_3=\pi/2-\epsilon$ or $r_1+r_2=\pi/2-\epsilon$.
 This completes the proof of the theorem.
 \end{proof}
 
 We can now prove the main existence theorem of this section.
 \begin{theorem} For any choice of positive numbers $a_1,a_2,a_3$ there exists at least one free boundary minimal surface of genus $0$
 with $6$ boundary components in the rectangular prism with side lengths $a_1,a_2,a_3$. Moreover each boundary component lies interior to 
 one of the faces of the prism. 
 \end{theorem}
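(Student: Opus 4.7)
The plan is to obtain the free boundary minimal surface as an interior critical point of the energy $E = \bar{F}^o$ on the moduli space $\mathcal{M}$, constructed by a mountain-pass-style argument that exploits the forward invariance of $V_\delta$ under the positive gradient flow $\Psi_t$ proved in the preceding theorem.

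First I would record why an interior critical point $p_* \in \mathcal{M}$ suffices. The associated harmonic map $u = (u_1,u_2,u_3): M_{p_*} \to P$ already satisfies $u(\Gamma_i) \subset F_i^\pm$ with Dirichlet condition on $\Gamma_i$ and Neumann on the other boundary components, which by Lemma \ref{lemma:characterization} is exactly the free boundary condition once $u$ is known to be conformal. The critical point relations $\fder{E}{r_j}(p_*) = \int_{\Gamma_j}\bigl[(\nu u_j)^2 - \sum_{i\neq j}(u_i')^2\bigr]\,ds_0 = 0$ for $j=1,2,3$ say that three boundary integrals of the stress-energy tensor of $u$ vanish. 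Combined with the reality of the Hopf differential $\Phi = \sum_j du_j \otimes du_j$ on $\partial M_{p_*}$ (which follows from the mixed Dirichlet/Neumann conditions) and with the fact that the space of $G$-invariant holomorphic quadratic differentials on $M_{p_*}$ with real boundary values is three-dimensional (matching $\dim \mathcal{M}$), the vanishing of all three derivatives forces $\Phi \equiv 0$. Hence $u$ is a conformal harmonic map, i.e.\ a free boundary (branched) minimal immersion meeting each face of $P$ orthogonally.

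Next I set up the existence argument. Fix $\delta > 0$ small enough that the invariance $\Psi_t(V_\delta) \cap \mathcal{M}_0 \subset V_\delta$ holds. Since $E$ is strictly increasing along nonstationary trajectories and bounded above by $c_0$ on $V_\delta$, every forward trajectory $t \mapsto \Psi_t(p)$ starting at $p \in V_\delta$ has one of two possible fates: it remains in $V_\delta \subset \mathcal{M}_0$ for all $t \geq 0$, or it exits $\mathcal{M}_0$ at some finite time into exactly one of the three pairwise disjoint peak regions $W_i := \{r_i > \pi/4\}$. Partition $V_\delta$ accordingly,
\[ \mathcal{E}_i := \{p \in V_\delta : \Psi_t(p) \in W_i \text{ for some } t > 0\}, \qquad \mathcal{E}_\infty := V_\delta \setminus (\mathcal{E}_1 \cup \mathcal{E}_2 \cup \mathcal{E}_3). \]
Each $\mathcal{E}_i$ is open by continuity of $\Psi_t$. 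To see each $\mathcal{E}_i$ is nonempty, pick $p \in V_\delta$ with $r_i$ just below $\pi/4$ and $r_j, r_k$ bounded well away from $\epsilon_j, \epsilon_k$; the lower bound $\fder{E}{r_i}(p) \geq c/(r_i |\log r_i|^2)$ from the previous theorem persists and drives the trajectory across $r_i = \pi/4$ in finite time. Since $V_\delta$ is a connected open subset of $\mathbb{R}^3$, it cannot be written as a disjoint union of three nonempty open sets, so $\mathcal{E}_\infty \neq \emptyset$. For $p \in \mathcal{E}_\infty$ the forward orbit lies in the compact set $\overline{V_\delta} \subset \mathcal{M}$, and the Lyapunov theorem for gradient flows yields a critical point of $E$ in its $\omega$-limit. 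The inward-pointing gradient on $\partial V_\delta$ from the previous theorem keeps this critical point in the interior of $V_\delta$, hence in the interior of $\mathcal{M}$.

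The principal obstacle is a Palais-Smale-type compactness: one needs sufficient regularity of $E$ on $\mathcal{M}$ for the flow and the Lyapunov argument to apply, and one must confirm that trajectories in $\mathcal{E}_\infty$ actually accumulate at honest critical points rather than drifting. Regularity of $E$ in $(r_1,r_2,r_3)$ follows from the simplicity of $\sigma_1^{(io)}$ (Proposition \ref{proposition:even-odd}) together with standard elliptic perturbation theory for the harmonic functions $u_i$; compactness of trajectories is built into the forward invariance of $V_\delta$ and the uniform bound $E \leq c_0$ on $\mathcal{M}_0$. Once these ingredients are in place, the topological argument above produces the interior critical point and thereby the desired free boundary minimal surface in the prism with side lengths $a_1,a_2,a_3$.
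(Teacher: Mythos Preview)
Your overall strategy---obtain an interior critical point of $E$ on $\mathcal M$ via the forward invariance of $V_\delta$, then deduce conformality of $u$ from the vanishing of the three derivatives together with the dimension count for $G$-invariant real holomorphic quadratic differentials---is the same as the paper's, and your treatment of the conformality step is in fact more explicit than the paper's one-line assertion.

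The existence argument, however, has a real gap. Your claim that each $\mathcal E_i$ is nonempty rests on invoking the lower bound $\partial E/\partial r_i \geq c/(r_i|\log r_i|^2)$ at a point with $r_i$ just below $\pi/4$; but that bound was proved in the preceding theorem only on the boundary face $\{r_i=\epsilon_i(r_j,r_k)\}$, where $r_i\leq\delta$ is \emph{small}. Near $r_i=\pi/4$ nothing is known about the sign of $\partial E/\partial r_i$, so there is no reason a trajectory starting there must cross into $W_i$. A second difficulty is disjointness: as you defined them, the $\mathcal E_i$ need not be pairwise disjoint, since a trajectory can pass from $V_\delta$ into $W_i$, return to $V_\delta$ (the invariance theorem permits this while $E\leq c_0$), and later enter $W_j$. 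Without both disjointness and nonemptiness of at least two of the $\mathcal E_i$, the connectedness contradiction does not fire; in particular nothing you have written rules out $V_\delta=\mathcal E_1$.

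The paper sidesteps both issues by phrasing the mountain pass directly in terms of paths: one joins two \emph{distinct} components of the superlevel set $\{E>c_0\}$ (which exist in different $W_i$ by the earlier proposition) by a path through $\mathcal M$, and runs the usual min-max over such paths. Any such path must cross $\mathcal M_0$ where $E\leq c_0$, and the forward-flow invariance of $V_\delta$ then localizes the resulting saddle point in the interior. If you want to salvage the basin-of-attraction formulation, you would need an independent argument that trajectories starting near two different faces $\{r_i=\pi/4\}$ and $\{r_j=\pi/4\}$ exit through those respective faces; this is essentially equivalent to the path argument.
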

 
 \begin{proof} We have shown that the gradient flow of $E$ preserves the set $V_\delta$. We have also shown that for a large constant $c$ the
 set $\{p\in V_\delta:\ E(p)>c\}$ has at least three connected components. Therefore we can construct a min-max critical point $p_0$ using paths with 
 endpoints in two distinct components. The condition that $\nabla E(p_0)=0$ then implies that the harmonic map $u=(a_1u_1,a_2u_2,a_3u_3)$ is a conformal
 immersion to a free boundary minimal surface in the rectangular prism with side lengths $2a_1,2a_2,2a_3$. Scaling by a factor of $1/2$ then produces 
 the desired free boundary minimal surface. 
 \end{proof}

 \section{Multiplicity and embeddedness results}
 
 In this section we show that for typical choices of $a_1,a_2,a_3$ there are at least two free boundary minimal surfaces of genus $0$ with $6$
 boundary components lying on the faces of the rectangular prism with side lengths $a_1,a_2,a_3$ in $\mathbb R^3$. Without loss of generality we may assume $a_1\leq a_2\leq a_3$, and we state our main multiplicity result.
 
 \begin{theorem} \label{theorem:multiplicity}
 If $a_1< a_3$ then there are at least two free boundary minimal surfaces of genus $0$ with $6$ boundary components,
 each lying on a face of the rectangular prism with side lengths $a_1,a_2,a_3$. In particular, unless the rectangular prism is a cube there are at
 least two solutions of our problem.
 \end{theorem}
 
 We need some preliminaries before giving the proof. We denote by $E(p)$ the total energy
 \[ E(p)=a_1^2E_1(p)+a_2^2E_2(p)+a_3^2E_3(p)
 \]
 for a point $p=(r_1,r_2,r_3)\in \mathcal M$. Note that a permutation of the radii $r_1,r_2,r_3$ yields a surface in the moduli space which is isometric
 to $M_p$ for $p=(r_1,r_2,r_3)$. For $i<j$ we let $\rho_{ij}$ be the permutation which interchanges $i$ and $j$. We now have the following result.
 
 \begin{proposition} \label{proposition:reflection}
 If $a_i\leq a_j$ and $p=(r_1,r_2,r_3)$ with $r_i>r_j$, then we have $E(\rho_{ij}(p))\geq E(p)$. If $a_i<a_j$ then $E(\rho_{ij}(p))> E(p)$.
 \end{proposition}
 
 \begin{proof} We first note that $E_i$ is a strictly increasing function of $r_i$ and a strictly decreasing function of $r_j$ for $j\neq i$. This is because
 if we take $p'\in\mathcal M$ with $r'_i<r_i$ and $r'_k=r_k$ for $k\neq i$ the surface $M_{p'}$ contains $M_p$ and the function $u_i$ can be extended
 to $M_{p'}$ to be a constant equal to $1$ or $-1$ in the components of $M_{p'}\setminus M_p$. It follows that $E_i(p')<E_i(p)$. Similarly if 
 $p'\in\mathcal M$ satisfies $r'_j>r_j$ and $r'_k=r_k$ for $k\neq j$ then the surface $M_p$ contains $M_{p'}$ and if we consider the function $u_i$
 on $M_p$ we can restrict it as a test function to $M_{p'}$ and we get $E_i(p')<E_i(p)$. 
 
 We next observe that $E_i(\rho_{ij}(p))=E_j(p)$ since both are equal to the minimal energy for functions which are $1$ and $-1$ on the
 components of $\gamma_i$ and Neumann on the other boundary components. We then have
 \[ E(\rho_{ij}(p))=a_i^2 E_j(p)+a_j^2 E_i(p)+a_k^2E_k(p)
 \]
 where $\{i,j,k\}=\{1,2,3\}$. Subtracting we thus have
 \[ E(\rho_{ij}(p))-E(p)=(a_j^2-a_i^2)E_i(p)+(a_i^2-a_j^2)E_j(p)=(a_j^2-a_i^2)[E_i(p)-E_i(\rho_{ij}(p))].
 \]
 We now note that since $r_i>r_j$, the $i$th component of $p$ is larger than the $i$th component of $\rho_{ij}(p)$ and the $j$th component of $p$ is smaller than the $j$th component of $\rho_{ij}(p)$. It then follows that $E(\rho_{ij}(p))\geq E(p)$ with strict inequality if $a_i>a_j$. This completes
 the proof.
 \end{proof}
 
 We need one other preliminary result. One might expect that if the $a_i$ are distinct and $p\in\mathcal M$ is a critical point then the $r_i$
 might also be distinct. We do not know if that is true, but we can show that if all the $r_i$ are the same, then all of the $a_i$ are the same.
 To see this we note that if $p$ is a critical point of $E$ for any choice of the $a_i$, it follows that the map $p\to (E_1(p),E_2(p),E_3(p))$
 has vanishing Jacobian determinant at $p$. This is because the critical point condition is
 \[ \sum_{i=1}^3 a_i^2\frac{\partial E_i}{\partial r_j}=0
 \]
 for $j=1,2,3$. This is the condition that the vector $(a_1^2,a_2^2,a_3^2)$ lies in the kernel of the transpose of the Jacobian matrix. Thus the
 Jacobian determinant must be zero at $p$. 
 
 Recall that $\frac{\partial E_i}{\partial r_i}=2\int_{\gamma_1^{(i)}}(\nu u_i)^2\ ds>0$ and 
 $\frac{\partial E_i}{\partial r_j}=-2\int_{\gamma_1^{(i)}}(u_i')^2\ ds<0$ for $j\neq i$. It follows that if $p=(r,r,r)$ with $r\in (0,\pi/4)$ the Jacobian
 matrix $A$ is of the form $a_{ii}=a(r)$ for $i=1,2,3$ and $a_{ij}=-b(r)$ for $i\neq j$ where $a,b$ are positive functions of $r$. We can now
 prove the preliminary result.
 
 \begin{proposition} \label{proposition:equal-r}
 If $p=(r,r,r)$ is a critical point of $E$ for some choice of $a_1,a_2,a_3$, then we must have $a_1=a_2=a_3$.
 \end{proposition}
 
 \begin{proof} As observed above we must have the determinant of the Jacobian matrix $A$ being $0$. If we let $x=a/b$ then we must have
 the vanishing of the determinant of the matrix $B$ with $b_{ii}=x$ and $b_{ij}=-1$ for $i\neq j$. The determinant of the matrix $B$ is
 easily computed as $(x+1)^2(x-2)$. Since $x>0$ we can only have $x=2$ and the null space is spanned by the vector $(1,1,1)$. Thus 
 we must have $a_1=a_2=a_3$ as claimed.
 \end{proof}
 
 We can now prove the main multiplicity theorem, Theorem \ref{theorem:multiplicity}.
 \begin{proof} Assume that $a_1\leq a_2\leq a_3$ with $a_1<a_3$. Assume further that there is only one critical point $p\in\mathcal M$ with
 $p=(r_1,r_2,r_3)$. For $i=1,2,3$, let $Q_i$ be the point in $\overline{\mathcal M}$ with $i$th coordinate $\pi/2$ and the other coordinates $0$.
 Since we have shown that taking a max-min on paths from $Q_i$ to $Q_j$ for any $i<j$ produces a critical point and there is only one critical point
 $P$, it follows that there is a path $\sigma(t)$ in $\overline{\mathcal M}$ parametrized on $[-1,1]$ so that $\sigma(-1)=Q_i$, $\sigma(0)=p$,
 $\sigma(1)=Q_j$, $\sigma(t)\in\mathcal M$ for $t\in (-1,1)$, $E(\sigma(t))$ is uniquely minimized at $t=0$, and the minimum value of $E$ along
 any other path from $Q_i$ to $Q_j$ is at most as large as $E(\sigma(0))=E(p)$. 
 
 We use reflections to obtain information about the relative sizes of $r_1,r_2,r_3$. We first show that if $a_i\leq a_j$ then $r_j\leq r_i$. We prove this by
 contradiction supposing $r_i<r_j$. Since $\sigma(t)$ tends to $Q_i$ as $t$ tends to $-1$ we see that $\sigma_i(t)>\sigma_j(t)$ for $t$ near $-1$.
 We then let $t_0$ be the smallest value of $t$ for which $\sigma_i(t)=\sigma_j(t)$, and so for $t<t_0$ we have $\sigma_i(t)>\sigma_j(t)$. By
 our assumption this inequality does not hold at $t=0$, and so we must have $t_0<0$. We now construct the path $\tilde{\sigma}$ from $Q_i$ 
 to $Q_j$ which is equal to $\sigma$ for $t\in [-1,t_0]$ together with the reflection of the same path under $\rho_{ij}$. Since $\sigma_i>\sigma_j$
 along that segment of $\sigma$ and $a_i\leq a_j$ it follows from Proposition \ref{proposition:reflection} that the energy on the reflected path is larger. On the other hand the minimum of the
 energy for $t\leq t_0$ is strictly larger than $E(p)$, and thus $\tilde{\sigma}$ is a path from $Q_i$ to $Q_j$ on which the minimum of $E$ is
 strictly larger than $E(p)$, a contradiction. Thus we have shown that $r_i\geq r_j$ for $i<j$.
 
 Since we have ordered the $a_i$ it follows from the previous paragraph that $r_3\leq r_2\leq r_1$ and by Proposition \ref{proposition:equal-r} we must have
 $r_3<r_1$ since we are assuming the $a_1<a_3$. We now consider a min-max path $\sigma(t)$ from $Q_2$ to $Q_3$. Since $r_1>r_3$,
 the set of $t$ for which $\sigma_1(t)>\sigma_3(t)$ contains $t=0$. We let the interval $I$ be the connected component of that set which contains 
 $t=0$ and we observe that $I=(t_0,t_1)$ where $t_0\in [-1,0)$ and  $t_1\in (0,1)$. Note that $t_1<1$ because $\sigma_3(t)>\sigma_1(t)$ for
 $t$ near $1$. By continuity we have $\sigma_1=\sigma_3$ at the endpoints of $I$, and we now construct a comparison path 
 $\tilde{\sigma}$ from $Q_2$
 to $Q_3$ by keeping $\sigma(t)$ for $t\notin I$ and replacing the remainder of $\sigma$ with the reflected path 
 $\rho_{13}(\sigma(t))$. Since $\rho_{13}$ fixes $Q_2$ the path $\tilde{\sigma}$ extends continuously from $Q_2$ to $Q_3$. Furthermore since $\sigma_1>\sigma_3$
 in $I$, by Proposition \ref{proposition:reflection}, the values of $E$ along the reflected path are larger. It then follows that the minimum of $E$ along the path $\tilde{\sigma}$ is strictly
 larger than $E(p)$ which is a contradiction to the assumption the $r_1>r_3$. This then contradicts the starting assumption that there was only one
 critical point for $E$. 
 \end{proof}
 
 We now show that the surfaces we construct are embedded. Precisely, we show that for any choices of $(a_1,a_2,a_3)$ and $p\in\mathcal M$, a
 critical point of the associated energy,
 the map $u=(a_1u_1,a_2u_2,a_3u_3)$ is a proper embedding of $M_p$ into $[-a_1,a_1]\times [-a_2,a_2]\times [-a_3,a_3]$ which is smooth
 and embedded up to the boundary. 
 
 We first determine the critical points of $u_i$. Note that if we identify opposite boundary curves of $M_p$ we obtain a closed surface $S_p$
 of genus $3$. We see that the differential $du_i$ is well defined and smooth on $S_p$. From the Hopf boundary point lemma we see that $du_i$
 is nowhere $0$ on $\Gamma_i$. If we consider the function $u_i$ on $\Gamma_j$ for $j\neq i$, we see that it is Neumann and so will have critical
 points at the critical points of the restriction of $u_i$ to $\Gamma_j$. These include at least a maximum point and a minimum point, so this leads to at least $2$ zeroes of $du_i$ on $S_p$ for each $j\neq i$. Thus we have at least $4$ zeroes of $du_i$, and since $u_i$ is harmonic, each zero has negative Hopf index. 
 Since the Euler characteristic of $S_p$ is $-4$ we conclude that there are exactly $4$ zeroes each of index $-1$ occurring at the maximum and 
 minimum points of $u_i$ on $\Gamma_j$ for $j\neq i$. If $k\neq i,j$ the maximum and minimum must occur at the fixed points of $\rho_k$ since each component of $\Gamma_j$ is invariant under $\rho_k$ and $u_i$ is even under $\rho_k$. In particular we have shown that $du_i\neq 0$ at all interior
 points of $M_p$. 
 
 We will show that $u=(a_1u_1,a_2u_2,a_3u_3)$ is an embedding by showing that the image of the upper
 half $M_p^+=\{x\in M_p:\ x_3>0\}$ is a graph $a_3u_3=a_3u_3(a_1u_1,a_2u_2)$. It then follows that $u$ is an embedding of all of $M_p$.
 We note that $u$ is defined on $M_p^+$, and so if we can show that the map $v=(a_1u_1,a_2u_2)$ is a diffeomorphism from $M_p^+$
 to a domain in $\mathbb R^2$ we will have shown the result.
 
 \begin{proposition} \label{proposition:diffeomorphism}
The map $v$ is a diffeomorphism from $M_p^+$ onto a domain of $\mathbb R^2$.
 \end{proposition}
 
 \begin{proof} For $i=1,2,3$ let $X_i$ be the conformal vector field on $\mathbb S^2$ gotten by projecting the standard basis vector field
 $e_i$ to the sphere; thus $X_i(x)=e_i-x_i x$. Since the flow $F_t$ generated by $X_i$ consists of conformal transformations, it follows that
 $u_i\circ  F_t$ is harmonic for each $t$. Thus we see that the derivative $X_iu_i$ is harmonic on $M_p$. We claim that $X_iu_i>0$
 on $M_p$. To see this it suffices to check it on the boundary and then apply the maximum principle. Since we are taking $u_i=-1$
 on the component of $\Gamma_i$ on which $x_i<0$ we see that $X_iu_i>0$ on $\Gamma_i$. Now if $j\neq i$ we have shown that $u_i$
 achieves its minimum at the points of $\Gamma_j$ with smallest $x_i$ and increases up to its maximum at the point of maximum $x_i$.
 Since $u_i$ is Neumann on $\Gamma_j$ we see that $\nabla u_i$ is tangent to $\Gamma_j$ and points in the positive $x_i$ direction.
 Thus we have on $\Gamma_j$, $X_iu_i=X_i\cdot \nabla u_i=e_i\cdot \nabla u_i$ and this is nonnegative and strictly positive except at the
 maximum and minimum points of $u_i$ on $\Gamma_j$. 
 
 We now consider the map $v=(a_1u_1,a_2u_2)$ on $M_p^+$. It will be convenient to write it in complex form $w=a_1u_1+\sqrt{-1}a_2u_2$ and
 we note that $w$ is a complex valued harmonic map. Since $M_p^+$ is of genus $0$, we can choose a complex coordinate $z$
 on $M_p^+$. The harmonic condition is then $w_{z\bar{z}}=0$, so $w_z$ is a holomorphic function. Similarly $\bar{w}_z$ is a holomorphic
 function and we consider the ratio $h=\bar{w}_z/w_z$. Thus $h$ is a meromorphic function and it is independent of the choice of complex coordinate
 $z$. Note that the condition $|h|\leq 1$ is equivalent to the condition that the Jacobian determinant of $v$ is nonnegative. 
 
 The condition that $(a_1u_1,a_2u_2,a_3u_3)$ is conformal can be written $w_z\bar{w}_z+(a_3\partial u_3/\partial z)^2=0$. It follows that both
 $w_z$ and $\bar{w}_z$ are nonzero except at the critical points of $u_3$ and there are $4$ of those on the boundary of $M_p^+$. We choose
 the orientation on $M_p^+$ so that the projection map to the $x_1x_2$-plane is orientation preserving. Now the boundary of $M_p^+$
 consists of the $4$ semicircles in $\Gamma_1^+$ and $\Gamma_2^+$ together with the circle $\gamma_1^{(3)}$ and the four arcs of the
 unit circle in the $x_1x_2$-plane joining the $4$ semicircles. The Jacobian determinant of $v$ is $0$ on the four arcs of the unit circle since the
 evenness of $v$ under $\rho_3$ implies that $v$ is Neumann along those. Similarly the Jacobian determinant is $0$ along $\gamma_1^{(3)}$ and
 thus $|h|=1$ along those portions of $\partial M_p^+$. Let us now consider the circular arcs beginning with $(\gamma_1^{(1)})_+$. On this arc we
 have $u_1=1$, so the image under $v$ is a vertical segment in the image plane. We have shown that $u_2$ is increasing along the arc which is oriented
 in the positive $x_2$ direction. Thus the tangential derivative of $v$ along the arc points in the positive $u_2$ direction. On the other hand since $u_2$
 is Neumann along $(\gamma_1^{(1)})_+$ and $u_1=1$ there, the normal derivative of $v$ along the arc points in the negative $u_1$ direction along
 the arc. Thus we see that the image of the tangential and normal vectors under the differential of $v$ form an orthogonal basis with positive orientation.
 It follows that the Jacobian determinant is positive in the interior of the arc $(\gamma_1^{(1)})_+$ and thus $|h|<1$ there. Similarly we have $|h|<1$
 on the other three arcs of $\Gamma_1^+$ and $\Gamma_2^+$. Note that at the maximum points of the arcs we have $\nabla u_3=0$ and so,
by conformality, either $w_z$ or $\bar{w}_z$ is zero at those points. Our argument shows that $w_z\neq 0$ and $\bar{w}_z= 0$ at these points. 
Therefore $h$ has no poles and we may apply the maximum modulus principle to conclude that $|h|<1$ in the interior of $M_p^+$. 

We have thus shown that the map $v$ is an orientation preserving immersion and an open map from the interior of $M_p^+$ to $\mathbb R^2$.
In order to show that $v$ is an embedding and thus a diffeomorphism onto its image domain, it suffices to show that $v$ is an embedding  on 
$\partial M_p^+$. 

Note that $\partial M_p^+$ consists of two curves, one of which is $\gamma_1^{(3)}$, and the other is the union of the $4$ half circles 
$\Gamma_1^+$
and $\Gamma_2^+$ together with the $4$ unit circle arcs in the $x_1x_2$-plane joining these half circles. We can see that $v$ is an embedding
of $\gamma_1^{(3)}$ because we showed that $u_2$ is decreasing and $u_1>0$ on the portion with $x_1>0$. Thus the image under $v$ of that half
of $\gamma_1^{(3)}$ is a graph over the vertical axis lying in the right half plane. Since the image of the other half is the reflection, we see that $v$
is an embedding on $\gamma_1^{(3)}$. 

To see that $v$ is an embedding of the other boundary curve, we note that because the image is symmetric under reflection through the coordinate
axes and because the sign of $u_i$ is the same as the sign of $x_i$, the image under $v$ of the portion of $M_p^+$ above a quadrant of the
$x_1x_2$-plane lies in the same quadrant. Thus it suffices to show that $v$ is an embedding of the portion of $\partial M_p^+$ with 
$x_1>0$ and $x_2>0$. This curve consists of two components, the first is made up of three segments which we denote $\sigma_1,\sigma_2,\sigma_3$,
and the second denoted $\sigma_4$ is the $1/4$ circle of $\gamma_1^{(3)}$ on which $x_1>0$ and $x_2>0$. To describe the first component,
we take the $1/4$ circle $\gamma_1^{(1)}$ with $x_2>0$ and $x_3>0$, denoted $\sigma_1$, together with an arc of the unit circle
in the $x_1x_2$-plane, denoted $\sigma_2$, joining this $1/4$ circle with the $1/4$ arc of $\gamma_1^{(2)}$ with $x_1>0$ and $x_3>0$, and finally the
$1/4$ arc of $\gamma_1^{(2)}$ which we denote $\sigma_3$. Along $\sigma_1$ we have $v_1=a_1$
and $v_2$ is increasing, so $v$ embeds this arc to a vertical segment $(a_1,t)$ for $0<t<t_0$ for some $t_0>0$. The unit circle arc, $\sigma_2$, is an orbit of the vector field $-X_1$ and so $u_1$ is decreasing and $u_2>0$ on this arc. This implies that the image of $\sigma_2$ is a graph over the 
$v_1$-axis to the left of the previous curve. Finally, on $\sigma_3$, the $1/4$ arc of $\gamma_1^{(2)}$, we have $v_2=a_2$ and $v_1>0$ is
decreasing, so the image of this arc is a segment $(t,a_2)$, $0<t<t_1$ for some $t_1>0$. Since the image $v(\sigma_2)$ lies in the region
$t_1\leq v_1\leq a_1$ we see that $v$ is an embedding on $\sigma_1\cup\sigma_2\cup\sigma_3$. 

To complete the proof we must show that the image of $\sigma_4$ is disjoint from that of the first three arcs so
that $v$ is an embedding on $\sigma_1\cup\sigma_2\cup\sigma_3\cup\sigma_4$. To prove this we use the fact that $v$ is a local diffeomorphism 
with positive 
Jacobian determinant. Since $v_1<a_1$ and $v_2<a_2$ on $\sigma_4$ we clearly have $v(\sigma_4)$ disjoint from $v(\sigma_1)$ and $v(\sigma_3)$. 
To show that $v(\sigma_2)$ and
$v(\sigma_4)$ do not intersect, suppose for the sake of contradiction that $(b_1,b_2)$ is a point of intersection with the largest second coordinate.
Because $v(\sigma_2)$ is a graph over the $v_1$-axis and $v(\sigma_4)$ a graph over the $v_2$-axis, and $b_2$ is the largest second coordinate
of all intersection points we can see that the infinite vertical segment $(b_1,t)$, $b_2<t$ is disjoint from both $v(\sigma_2)$ and $v(\sigma_4)$. On the
other hand the initial points of the segment lie in the image of $v$ because that image lies locally to the right of $v(\sigma_4)$. This would
imply that the entire segment lies in the image of $v$, a contradiction since $v$ is an open map with bounded image. Thus we have shown that $v$
is an embedding on the portion of $\partial M_p^+$ with $x_1>0$ and $x_2>0$. Because of the aforementioned symmetry it follows that $v$
is an embedding on the entire $\partial M_p^+$. It now follows that $v$ is an embedding on $M_p^+$ and therefore a diffeomorphism onto
its image, an open set in $\mathbb R^2$. This completes the proof.
 \end{proof}
 
 We can now prove the following embeddedness theorem.
 \begin{theorem} Suppose $a_1,a_2,a_3$ are positive numbers and that $\Sigma$ is an immersed free boundary minimal surface of genus $0$ in 
 $[-a_1,a_1]\times[-a_2,a_2]\times[-a_3,a_3]$ with one boundary component on each face of the rectangular prism. Assume that $\Sigma$
 is invariant under the reflections in the coordinate planes. Then $\Sigma$ is embedded.
 \end{theorem}
 
 \begin{proof} Because of the reflection symmetries of $\Sigma$, we can conformally parametrize $\Sigma$ by an equivariant conformal 
 harmonic map $u:M_p\to [-a_1,a_1]\times[-a_2,a_2]\times[-a_3,a_3]$ for some choice of $p\in\mathcal{M}$. The point $p$ is then a critical
 point of the corresponding energy functional $E$ on $\mathcal{M}$. As above we write $u=(a_1u_1,a_2u_2,a_3u_3)$.
 
 By Proposition \ref{proposition:diffeomorphism} the map $v=(a_1u_1,a_2u_2)$ is a diffeomorphism from $M_p^+$ to a domain $\Omega$ in the 
 $u_1u_2$-plane. Thus the image under $u$ of $M_p^+$ is a graph over $\Omega$. It follows that $\Sigma=u(M_p)$ is the union of this
 graph with its reflection in the $u_1u_2$-plane and it is an embedded surface. This completes the proof.
 \end{proof}
 
 \section{Maximization and minimal surfaces in products of balls}
 
 In this section we will construct a maximizing metric for $F$ over all metrics with the required symmetries. We do this by first maximizing
 over conformal metrics on the surface $M_p$ for any $p\in\mathcal M$, and then maximizing over $\mathcal M$. In Lemma \ref{lemma:maximization}
 we showed that maximizing $F$ over conformal metrics on $M_p$ is equivalent to maximizing each of the normalized first eigenvalues for the
 Steklov-Neumann problem $(SN_i)$. We showed how to do this for the lowest odd eigenvalue, and we now extend this to maximize
 the first eigenvalue. 

We fix $i\in\{1,2,3\}$ and $p\in\mathcal M$, and consider the maximization of the first eigenvalue of the problem $(SN_i)$ on $M_p$. We recall that $\rho_j$ is the reflection of $\mathbb R^3$ across the coordinate plane orthogonal to the $x_j$-axis. We then consider the class $\mathcal F$ of finite energy maps
$U:M_p\to\mathbb R^3$ that satisfy $U\circ\rho_j=\rho_j\circ U$ for $j=1,2,3$,  with $\|U\|=1$ on $\Gamma_i$. We have the following result.

\begin{proposition} There is a map $U\in\mathcal F$ that minimizes the energy and is smooth up to the boundary of $M_p$ and satisfies the 
boundary conditions $\nu(U)=\lambda U$ on $\Gamma_i$ and $\nu(U)=0$ on $\Gamma_j$ for $j\neq i$ where $\lambda=\nu(\|U\|)$ on $\Gamma_i$.
Furthermore the component functions $U=(u_1,u_2,u_3)$ are first eigenfunctions of $(SN_i)$ for the metric $\lambda$ with eigenvalue $1$. 
\end{proposition}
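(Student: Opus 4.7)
My plan is to invoke the direct method of the calculus of variations, then derive and interpret the Euler--Lagrange system.

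First, I would establish existence by extracting a subsequential limit of a minimizing sequence $U_n\in\mathcal{F}$. The boundary constraint gives $\|U_n\|_{L^2(\Gamma_i)}^2=L(\Gamma_i)$, and combined with the energy bound and the Poincar\'e--trace inequality $\|U\|_{L^2(M)}\leq C(\|U\|_{L^2(\Gamma_i)}+\|\nabla U\|_{L^2(M)})$, this yields uniform $W^{1,2}(M;\mathbb R^3)$-bounds. A subsequence converges weakly in $W^{1,2}$ to some $U$; equivariance is a closed linear condition and passes to weak limits, the compact trace $W^{1,2}(M)\hookrightarrow L^2(\partial M)$ available in two dimensions forces strong boundary convergence so that $\|U\|=1$ a.e.\ on $\Gamma_i$ in the limit, and weak lower semicontinuity of the Dirichlet energy shows $E(U)=\inf_{\mathcal{F}}E$.

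Next I would derive the Euler--Lagrange equations and verify smoothness. Interior equivariant variations give $\Delta U=0$ weakly. Variations supported on $\Gamma_j$ for $j\neq i$ are unconstrained modulo equivariance and yield the natural condition $D_\nu U=0$. On $\Gamma_i$, admissible variations $V$ must be tangent to the sphere, $V\cdot U=0$, and stationarity forces $D_\nu U$ to be parallel to $U$, so $D_\nu U=\lambda U$ with $\lambda=U\cdot D_\nu U=\tfrac12 D_\nu\|U\|^2=D_\nu\|U\|$, using $\|U\|=1$ on $\Gamma_i$. Smoothness up to the boundary is then standard: interior regularity from harmonicity, $\Gamma_j$-regularity from pure Neumann elliptic theory, and $\Gamma_i$-regularity from free-boundary harmonic map theory (the conditions $\|U\|=1$ and $D_\nu U\parallel U$ express that $U$ meets $S^2$ orthogonally, so $U$ extends across $\Gamma_i$ by inversion in $S^2$ and Schauder bootstrap yields $C^\infty$ up to $\Gamma_i$).

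The Euler--Lagrange equations now display each component $u_\ell$ as a solution of the weighted $(SN_i)$ eigenvalue equation with weight $\lambda$ and eigenvalue $1$; integrating gives $\int_{\Gamma_i}\lambda u_\ell\,ds=\int_M\Delta u_\ell\,dA=0$, so each $u_\ell$ is a legitimate competitor for $\sigma_1^{(i)}$. To verify that $1$ is actually the first nonzero eigenvalue, I would use the second variation of $E$ at the minimizer: any candidate lower eigenfunction $\phi$ in the correct symmetry class with $\int_{\Gamma_i}\lambda\phi\,ds=0$ can be lifted to an equivariant sphere-tangent perturbation $V$ of $U$, and the inequality $\ddot{E}(0)\geq 0$ yields $\int_M\|\nabla\phi\|^2\,dA\geq\int_{\Gamma_i}\lambda\phi^2\,ds$, ruling out any weighted Rayleigh quotient strictly below $1$.

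The main obstacle is this final step: constructing, from a scalar test function $\phi$, an equivariant perturbation $V:M\to\mathbb R^3$ tangent to $S^2$ along $U|_{\Gamma_i}$ whose second variation reproduces the weighted Rayleigh quotient of $\phi$ on the nose. This requires exploiting the one-dimensional symmetry decomposition $V^{(i)}=V^{(i)}_{eoe}\oplus V^{(i)}_{oee}\oplus V^{(i)}_{eeo}$ from Proposition~\ref{proposition:even-odd} to match $\phi$ with the correct component of $U$ and to handle the contributions from the other two components cleanly; carrying this out is the principal technical content of the argument.
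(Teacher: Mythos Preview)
Your existence, regularity, and Euler--Lagrange derivation are essentially what the paper does (the paper simply cites the half-harmonic map literature rather than spelling out the direct method, but the content is the same). The real issue is the last step, and you have correctly located the obstacle without resolving it.

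Your plan is to run the second-variation inequality
\[
\int_{M_p}\|\nabla V\|^2\,da \;\geq\; \int_{\Gamma_i}\|V\|^2\,\lambda\,ds
\qquad\text{for all equivariant }V\text{ with }V\cdot U=0\text{ on }\Gamma_i,
\]
and, given a putative first eigenfunction $\phi$ with eigenvalue $\sigma<1$ (say $\phi$ is odd under $\rho_j$ and even under the other two reflections), to insert a $V$ built from $\phi$ so that the inequality becomes $\int\|\nabla\phi\|^2\geq\int\lambda\phi^2$. The natural choice is $V=\phi\,e_j$; this is equivariant, but it satisfies $V\cdot U=\phi\,u_j$, which vanishes on $\Gamma_i$ only if $u_j\equiv 0$ there. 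When $u_j\not\equiv 0$ you would have to deform $V$ to make it tangent to the sphere along $U$, and then $\|\nabla V\|^2$ no longer equals $\|\nabla\phi\|^2$; the extra terms involve derivatives of $U$ and there is no reason they should be nonpositive. So the second-variation route, by itself, does not close.

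The paper handles this by a case split that avoids the difficulty entirely. First it shows that each nonzero $u_j$ has a sign on the half $M_p\cap\mathbb S^2_{j,+}$: replacing $u_j$ by its absolute value there (and extending oddly) produces another minimizer, hence a smooth map, so $u_j$ cannot change sign and is strictly positive. Likewise any first eigenfunction $v$ odd under $\rho_j$ is one-signed on that half. Now if $u_j\not\equiv 0$, integrate by parts on $M_p\cap\mathbb S^2_{j,+}$ to get
\[
\sigma\int_{\gamma_1^{(i)}} v\,u_j\,\lambda\,ds
=\int\langle\nabla v,\nabla u_j\rangle
=\int_{\gamma_1^{(i)}} v\,u_j\,\lambda\,ds,
\]
forcing $\sigma=1$ since the boundary integral is strictly positive. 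Only in the remaining case $u_j\equiv 0$ does the paper invoke stability, and there your choice $V=v\,e_j$ is already tangent to the sphere, so the second-variation inequality gives $\sigma\geq 1$ directly. In short, the paper uses the positivity/orthogonality trick when the second variation is obstructed, and the second variation only when the obstruction is absent; your proposal tries to push the second variation through both cases, and that is where it stalls.
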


\begin{proof} The existence and regularity of a minimizing map $U$ follow from the half-harmonic map theory (\cite{S}, \cite{DR}, \cite{LP}). The boundary conditions of
$U$ follow from standard variational considerations; $U$ satisfies the Neumann condition on $\Gamma_j$ for $j\neq i$ because we have imposed
no boundary condition on $\Gamma_j$, and the boundary condition on $\Gamma_i$ is the condition that $\nu(U)$ is parallel to $U$ at each point of
$\Gamma_i$. This condition says that $\nu(U)=\alpha U$ for some function $\alpha$, so taking the dot product with $U$ we find 
\[ \alpha=U\cdot\nu(U)=\frac{1}{2}\nu(\|U\|^2)=\nu(\|U\|)\equiv \lambda.
\]
Thus we see that the component functions $U=(u_1,u_2,u_3)$ are eigenfunctions for the problem $(SN_i)$ for the metric $\lambda$.

To see that $u_1,u_2,u_3$ are first eigenfunctions, we observe that if $u_j$ is not identically $0$ then $u_j$ does not change sign in the
region $M_p\cap \mathbb S^2_{j,+}$ where $\mathbb S^2_{j,+}$ denotes the hemisphere containing $\gamma_1^{(j)}$. By replacing $u_j$ by
its negative if necessary we may assume that $u_j>0$ at some point of $M_p\cap \mathbb S^2_{j,+}$. We then observe that the map $\hat{U}$
that has $u_j$ replaced by the function $\hat{u}_j=|u_j|$ in $M_p\cap \mathbb S^2_{j,+}$ and with $\hat{u}_j\circ\rho_j=-\hat{u}_j$ in the other
hemisphere is also a minimizer and is therefore regular. This implies that $u_j>0$ everywhere in $M_p\cap \mathbb S^2_{j,+}$. By similar reasoning
any first eigenfunction $v$ that satisfies $v\circ\rho_j=-v$ does not change sign in $M_p\cap \mathbb S^2_{j,+}$. By Proposition \ref{proposition:even-odd} there is a nonzero first eigenfunction $v$ that is odd under $\rho_j$ for some $j$, with $v>0$ in $M_p\cap \mathbb S^2_{j,+}$. Suppose that 
$\sigma<1$ is the first eigenvalue. If $u_j$ is not identically zero, then we can use the mixed Steklov and Neumann boundary conditions on
$M_p\cap\mathbb S_{j,+}^2$ to see that
\[ \int_{M_p\cap\mathbb S^2_{j,+}}\langle\nabla v,\nabla u_j\rangle\ da=\sigma\int_{\Gamma_i\cap\mathbb S^2_{j,+}} v u_j\ \lambda\ ds=
\int_{\Gamma_i\cap\mathbb S^2_{j,+}} vu_j \ \lambda\ ds.
\]
Since both $v$ and $u_j$ are positive in $\gamma_1^{(j)}\cap\mathbb S^2_+$ this is a contradiction. 

On the other hand if $u_j\equiv 0$, we must use the condition that the map $U$ is minimizing. We note that the stability condition for the map
$U$ shows that for any $V:M_p\to\mathbb R^3$ with $V\circ\rho_k=\rho_k\circ V$ for $k=1,2,3$ and with $V\cdot U=0$ on $\Gamma_i$ we have
\[ \int_{M_p}\|\nabla V\|^2\ da\geq \int_{\Gamma_i}\|V\|^2\ \lambda\ ds.
\]
This follows from the calculation of the second variation for a family $U_t$ with $\dot{U}=V$ and $\|U_t\|\equiv 1$ on $\Gamma_i$ at $t=0$. We
let $W$ denote the second $t$ derivative of $U_t$ at $t=0$. We have
\[ \frac{1}{2}\frac{d^2}{dt^2}E(U_t)=\int_{M_p}\|\nabla V\|^2\ da+\int_{M_p}\langle\nabla U,\nabla W\rangle\ da.
\]
Using the harmonic property of $U$ and the boundary condition we have
\[ \int_{M_p}\langle\nabla U,\nabla W\rangle\ da=\int_{\Gamma_i}\nu(U)\cdot W\ ds=\int_{\Gamma_i}U\cdot W\ \lambda\ ds.
\] 
Since $U_t\cdot U_t\equiv 1$ on $\Gamma_i$ we have at $t=0$
\[ 0=\frac{1}{2}\frac{d^2}{dt^2}\|U_t\cdot U_t\|^2=\|V\|^2+U\cdot W.
\]
Thus the second variation is given by
\[ \frac{1}{2}\frac{d^2}{dt^2}E(U_t)=\int_{M_p}\|\nabla V\|^2\ da-\int_{\Gamma_i}\|V\|^2\ \lambda\ ds
\]
and this is nonnegative as claimed. 

Now suppose $v$ is a first eigenfunction for $(SN_i)$ with respect to the metric $\lambda$ and with $v\circ\rho_j=-v$, and suppose further that
$u_j\equiv 0$. We can then take the variation $V=ve_j$ where $e_j$ is the $j$th standard basis vector of $\mathbb R^3$. The fact that $v$ is odd 
under $\rho_j$ and even under $\rho_k$ for $k\neq j$ (by Proposition \ref{proposition:even-odd}) then implies that $V\circ \rho_k=\rho_k\circ V$ for $k=1,2,3$. Since $u_j\equiv 0$ we have
$U\cdot V\equiv 0$ on $\Gamma_i$. Thus from the stability of $U$ we have
\[ \int_{M_p}\| \nabla v\|^2\ da\geq \int_{\Gamma_i}v^2\ \lambda\ ds,
\]
and this implies that the lowest eigenvalue is at least one.
\end{proof}

We now show that the metric $\lambda$ maximizes the normalized lowest eigenvalue of $(SN_i)$ over all conformal metrics on $M_p$.
\begin{proposition} \label{proposition:maximizes}
Given any conformal metric $g$ on $M_p$ with the reflection symmetries and with lowest eigenvalue $\sigma_1^{(i)}(g)$ 
for $(SN_i)$ we have
\[ L_g(\Gamma_i)\sigma_1^{(i)}(g)\leq \int_{\Gamma_i}\lambda\ ds.
\]
Moreover equality holds only if $g$ is a constant multiple of $\lambda$ on $\Gamma_i$.
\end{proposition}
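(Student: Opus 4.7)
The plan is to use the three components $u_1, u_2, u_3$ of the map $U$ produced in the previous proposition as test functions in the variational characterization of $\sigma^{(i)}_1(g)$. Writing the boundary arclength as $ds_g = \mu\,ds_0$, where $ds_0$ is spherical arclength and $\mu$ is the (necessarily $G$-invariant) conformal factor, the two-dimensional conformal invariance of the Dirichlet integral makes $\int_M\|\nabla u_j\|^2\,da$ independent of the conformal representative. Comparing the Rayleigh quotients for $g$ and for $\lambda$ then reduces to comparing their boundary weights.

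The first step is to verify the admissibility condition $\int_{\Gamma_i} u_j\,ds_g = 0$ for $j=1,2,3$. Since $U\circ\rho_k = \rho_k\circ U$ for every $k$, each component $u_j$ is odd under $\rho_j$ and even under the other two reflections; meanwhile $\mu$ is invariant under each $\rho_k$ and each $\rho_k$ acts on $\partial M_p$ as a spherical isometry. When $j=i$, $\rho_i$ interchanges the two circles of $\Gamma_i$; when $j\neq i$, $\rho_j$ preserves each component of $\Gamma_i$. In either situation a direct change of variables gives $\int_{\Gamma_i} u_j\mu\,ds_0 = -\int_{\Gamma_i} u_j\mu\,ds_0$, hence zero. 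With admissibility established, summing the Rayleigh inequalities $\sigma_1^{(i)}(g)\int_{\Gamma_i} u_j^2\mu\,ds_0 \le \int_M \|\nabla u_j\|^2\,da$ and using $\sum_j u_j^2 = \|U\|^2 = 1$ on $\partial M_p$ collapses the left side to $\sigma_1^{(i)}(g)L_g(\Gamma_i)$. For the right side, integration by parts combined with the harmonicity of each $u_j$, the Neumann condition on $\Gamma_k$ for $k\neq i$, and the Steklov condition $D_\nu U = \lambda U$ on $\Gamma_i$ gives
\[ \sum_j \int_M \|\nabla u_j\|^2\,da = \int_{\Gamma_i} U\cdot D_\nu U\,ds_0 = \int_{\Gamma_i} \lambda\|U\|^2\,ds_0 = \int_{\Gamma_i}\lambda\,ds_0, \]
which is the desired inequality.

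For the equality case, the summed inequality is sharp only if each nontrivial Rayleigh inequality is sharp, so any $u_j\not\equiv 0$ must itself be a first eigenfunction of $(SN_i)$ for the metric $g$. Matching its Steklov boundary condition $D_\nu u_j = \sigma_1^{(i)}(g)\mu u_j$ against the identity $D_\nu u_j = \lambda u_j$ coming from the metric $\lambda$ gives $\lambda = \sigma_1^{(i)}(g)\mu$ pointwise wherever $u_j\neq 0$; since $\sum_j u_j^2 = 1$ on $\Gamma_i$ guarantees that at every point at least one component is nonzero, the identity holds everywhere and $g$ is a constant multiple of $\lambda$ on $\Gamma_i$. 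The only step requiring genuine care is the orthogonality, which is the sole place the imposed $G$-symmetry of $g$ enters; everything else is a direct consequence of the boundary conditions already established for $U$ together with the standard Rayleigh quotient bound.
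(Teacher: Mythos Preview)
Your proof is correct and follows essentially the same route as the paper's: use the components $u_j$ of $U$ as test functions for $(SN_i)$ with metric $g$, verify orthogonality from the oddness of $u_j$ under $\rho_j$ together with the $G$-invariance of $g$, sum the Rayleigh inequalities, and use $\|U\|=1$ on $\Gamma_i$ and the boundary conditions of $U$ to identify both sides. Your write-up is in fact slightly more careful than the paper's in two spots---you spell out the $j=i$ versus $j\neq i$ cases in the orthogonality check, and in the equality analysis you note explicitly that $\sum_j u_j^2=1$ forces some $u_j\neq 0$ at every boundary point, which is what makes $\lambda=\sigma_1^{(i)}(g)\mu$ hold pointwise.
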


\begin{proof} Because of the reflection symmetry of $g$ and the oddness of $u_j$ under $\rho_j$ it follows that $\int_{\Gamma_i}u_j\ ds_g=0$.
Therefore we have
\[ \sigma_1^{(i)}(g)\int_{\Gamma_i}u_j^2\ ds_g\leq \int_{M_p}\|\nabla u_j\|^2\ da.
\]
Summing on $j$ we then have
\[ \sigma_1^{(i)}(g)L_g(\Gamma_i)\leq E(U)=\int_{\Gamma_i} \lambda\ ds
\]
which is the desired inequality. If equality holds we see that each $u_j$ is an eigenfunction for $(SN_i)$ with respect to $g$. Thus we have
for each $j$ on $\Gamma_i$
\[ \nu(u_j)= \sigma_1^{(i)}(g)\lambda_g u_j
\]
where $\lambda_g$ denotes the metric $g$ restricted to $\Gamma_i$. This implies that $ \sigma_1^{(i)}(g)\lambda_g=\lambda$ and completes the
proof.
\end{proof}

We now turn to the problem of maximizing $F$ over the full space of metrics. We fix positive numbers $a_1,a_2,a_3$ and note that for a point
$p\in\mathcal M$ the map $U_p:M_p\to \mathbb R^9$ given by $U_p=(a_1U_{1,p},a_2U_{2,p},a_3U_{3,p})$ where $U_{i,p}$ is the map constructed
above for the surface $M_p$ and the problem $(SN_i)$. The map has energy
\[ E(U_p)=a_1^2E(U_{1,p})+a_2^2E(U_{2,p})+a_3^2E(U_{3,p}).
\]
By Proposition \ref{proposition:maximizes} we see that 
\[ E(U_p)=\max\{F(M_p,g):\ g\ \mbox{conformal metric on } M_p\}.
\]

For notational convenience we denote $E(U_p)$ as $E(p)$. We now show that there is a point $p_0\in\mathcal M$ with 
\[ E(p_0)=\max\{E(p):\ p\in\mathcal M\}.
\]
We know that $E$ is bounded above (see Proposition \ref{proposition:F-supremum}), and we show that $E$ extends continuously to the closure $\overline{\mathcal M}$.

\begin{proposition} The function $E$ is locally Lipschitz on $\mathcal M$ and extends continuously to $\overline{\mathcal M}$. The extended 
function, also denoted $E$, vanishes at the five boundary points $(0,0,0)$, $(\pi/4,\pi/4,\pi/4)$, $(\pi/2,0,0)$, $(0,\pi/2,0)$, and $(0,0,\pi/2)$ and 
is positive at all other points of $\overline{\mathcal M}$.
\end{proposition}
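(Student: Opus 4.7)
The plan is to verify the four assertions --- local Lipschitz continuity on $\mathcal M$, continuous extension to $\overline{\mathcal M}$, vanishing of the extended $E$ at the five distinguished vertices, and strict positivity of $E$ at every other point of $\overline{\mathcal M}$ --- in that order, using the framework of Sections 6 and 7.

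For local Lipschitz continuity I would mimic the first-variation calculation from Section 6. Given $p \in \mathcal M$ and a perturbation direction $h \in \mathbb R^3$, construct a smooth $G$-equivariant vector field on $\mathbb S^2$ whose time-$t$ flow $\Psi_t$ carries $M_p$ to $M_{p+th}$. Pulling back the minimizer $U_{p+th}$ by $\Psi_t$ gives an admissible competitor on $M_p$, and minimality of $U_p$ forces $E(p) \leq E(U_{p+th}\circ\Psi_t)$; a change-of-variables expansion shows the right-hand side differs from $E(p+th)$ by $O(t)$, and reversing the roles of $p$ and $p+th$ yields $|E(p)-E(p+th)| \leq C|t|$ on compact subsets of $\mathcal M$. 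Continuity up to $\overline{\mathcal M}$ then follows by combining this with the uniform upper bound from Proposition \ref{proposition:F-supremum} and compactness of minimizing half-harmonic maps: along any $p_n \to p_0 \in \partial\mathcal M$, after passing to a subsequence the normalized minimizers $U_{i,p_n}$ converge in $W^{1,2}$ away from the degenerating set to an admissible limit map, and independence of the subsequential limit follows from a comparison estimate analogous to the Lipschitz argument.

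To verify the vanishing of $E$ at the five vertices I would apply the upper bounds term by term. At $(0,0,0)$ every $r_i \to 0$, so Proposition \ref{proposition:upper-bound} applied in each axis direction forces $a_i^2 L_g(\Gamma_i)\sigma_1^{(i)}(g) \leq c\, a_i^2/|\log r_i|$ to vanish uniformly in the conformal metric $g$. At $(\pi/4,\pi/4,\pi/4)$ all three pairs of adjacent boundary circles become tangent simultaneously, the open domain $M_p$ breaks into eight disjoint spherical triangles, and piecewise-constant test functions arranged with the correct reflection parities (so that the zero-mean condition on $\Gamma_i$ is automatic by symmetry) drive every normalized eigenvalue to zero. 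At each of $(\pi/2,0,0)$, $(0,\pi/2,0)$, $(0,0,\pi/2)$ the two small radii make two terms vanish by Proposition \ref{proposition:upper-bound}, while the surface itself collapses to a thin cylindrical slab between two nearly-coincident great circles; a linear test function transverse to the slab has Rayleigh quotient tending to zero as the slab thickness vanishes, so the last normalized eigenvalue drops to zero as well.

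For strict positivity at every other point of $\overline{\mathcal M}$ I would exhibit an admissible map with strictly positive energy. Interior positivity is automatic: for $p \in \mathcal M$ the surface $M_p$ is smooth and connected with six genuine boundary circles and each $U_{i,p}$ of Section 7 has strictly positive energy. At a boundary point $p_0$ that is not one of the five distinguished vertices, the strategy is to single out at least one index $i$ for which $\Gamma_i$ survives the degeneration and the associated Steklov--Neumann problem $(SN_i)$ still has a positive normalized first eigenvalue, so that the corresponding term $a_i^2\int_{\Gamma_i}\lambda_i\, ds$ extends continuously to a strictly positive value. The main technical obstacle is the case analysis at boundary points where two constraints become active simultaneously (edges of $\overline{\mathcal M}$ that are not vertices), since one must confirm that a surviving term remains bounded away from zero and is continuous up to the boundary; this is handled using the explicit odd-eigenvalue formulas from Section 6 together with a monotonicity/comparison argument for the harmonic functions $u_i$.
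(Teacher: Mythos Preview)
Your local Lipschitz argument and your treatment of $(0,0,0)$ and $(\pi/4,\pi/4,\pi/4)$ match the paper's approach. However, there is a genuine error in the argument for vanishing at the vertices $(\pi/2,0,0)$, $(0,\pi/2,0)$, $(0,0,\pi/2)$. A test function that is linear in the transverse direction across a slab of width $\delta$ (taking the values $\pm 1$ on the two components of $\Gamma_1$) has gradient of size $\sim 1/\delta$, hence Dirichlet integral $\sim (1/\delta)^2\cdot(\delta L)=L/\delta$, while $\int_{\Gamma_1}u^2\sim L$; its Rayleigh quotient is therefore $\sim 1/\delta\to\infty$, not $0$. This is precisely the scalar function $u_1$ of Section~6, and the paper explicitly notes there that its energy blows up as $r_1\to\pi/2$ --- indeed, this divergence is the reason $F^o$ has no global maximum on $\mathcal M$. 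What must be shown here is that the \emph{minimum} energy of an equivariant map $U_{1}:M_p\to\mathbb R^3$ with $\|U_1\|=1$ on $\Gamma_1$ (equivalently, by Proposition~\ref{proposition:maximizes}, the supremum of $L_g(\Gamma_1)\sigma_1^{(1)}(g)$ over all conformal $g$) tends to zero. The paper achieves this with the \emph{opposite} construction: a map into $\mathbb S^2$ that is constant in the transverse direction and depends only on the longitudinal angle along $\gamma_1^{(1)}$; such a map has bounded gradient and its energy is controlled by the area of the thin annulus, hence tends to $0$.

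Your continuous-extension argument via compactness of half-harmonic maps is also underspecified at exactly these corners: when $r_1\to\pi/2$ and $r_2,r_3\to 0$ the domain $M_{p_n}$ collapses entirely, so there is no nondegenerate limit surface on which a limit map can live, and ``convergence away from the degenerating set'' has no content. The paper proceeds instead by an explicit case analysis on $\partial\mathcal M$: for each boundary stratum it identifies the limit domain $M_p$ (possibly with cusps, possibly with fewer than six boundary circles, possibly disconnected), defines the limiting minimization problems $U_{i,p}$ on it, and checks continuity of each $E(U_{i,q})$ directly. The positivity away from the five distinguished vertices then follows because at every other boundary point $M_p$ has at most two connected components and at least one of the maps $U_{i,p}$ is nonconstant.
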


\begin{proof} If $K$ is a compact subset of $\mathcal M$ and $p_1,p_2\in K$ we must show that $|E(p_1)-E(p_2)|\leq c\|p_1-p_2\|$ where 
$c$ depends on $K$. We may choose $\|p_1-p_2\|$ to be the maximum norm. Without loss of generality we may assume that $E(p_1)>E(p_2)$
since we can interchange $p_1$ and $p_2$. We can now construct a diffeomorphism $\Psi:M_{p_1}\to M_{p_2}$ such that for the spherical metric
$g_0$ we have $\|\Psi^*(g_0)-g_0\|\leq c\|p_1-p_2\|$ where the norm on the left is the absolute value of the largest eigenvalue. We also choose $\Psi$
to commute with the reflections $\Psi\circ\rho_i=\rho_i\circ\Psi$. We now consider the function $U_{p_2}\circ\Psi$ on $M_{p_1}$, and we see that
\[ E(U_{p_2}\circ\Psi,g_0)-E(U_{p_2}\circ\Psi,\Psi^*(g_0))\leq c\|p_1-p_2\|.
\]
Since the energy is invariant under isometries, we have $E(U_{p_2}\circ\Psi,\Psi^*(g_0))=E(p_2)$, so we have
\[ E(U_{p_2}\circ\Psi,g_0)\leq E(p_2)+c\|p_1-p_2\|.
\]
By the minimizing property of $U_{p_1}$ this implies $E(p_1)\leq E(p_2)+c\|p_1-p_2\|$ and this gives the Lipschitz property of $E$ on $K$.

Now suppose $p=(r_1,r_2,r_3)$ is a point of $\partial\mathcal M$, so we have each $r_j\geq 0$ and $r_j+r_k\leq \pi/2$ for $j\neq k$. Let's first suppose
that one of the $r_j$ is $\pi/2$, for simplicity of notation assume $r_1=\pi/2$. We then have $r_2=r_3=0$. A neighborhood of $p$ in
$\mathcal M$ then consists of points with $r_1>\pi/2-\epsilon$, $r_2<\epsilon$, and $r_3<\epsilon$ for some small $\epsilon>0$. For small
$\epsilon$ we can see that $E$ can be made arbitrarily small. This is because if $q$ is a point in this neighborhood, then $E(U_{1,q})$ is less than
or equal to the minimum of maps from the annulus gotten by removing $\Gamma_2$ and $\Gamma_3$. Since $r_1$ is close to $\pi/2$, this 
minimum is small since we can construct a map from the circle $\gamma_1^{(1)}$ to $\mathbb S^2$ and extend it to be independent of the distance to
$\gamma_1^{(1)}$. The energy of this is small since $r_1>\pi/2-\epsilon$. Similarly the $E(U_{2,q})$ is small since it is bounded above by 
the minimum energy for maps from the annulus bounded by $\Gamma_2$. This annulus has large conformal modulus and we can construct a 
function which is $1$ on $\gamma_1^{(2)}$ and $-1$ on $\gamma_2^{(2)}$ with small energy. By similar reasoning we see that
$E(U_{3,q})$ is small. Thus $E(q)$ can be made arbitrarily small by choosing $\epsilon$ small. Therefore if we define $E(p)=0$,
then $E$ is continuous at $p$. Similarly we define $E((0,\pi/2,0))=E((0,0,\pi/2))=0$. 

Now if we consider a point $p=(r_1,r_2,r_3)$ in $\partial\mathcal M$ with each $r_i<\pi/2$, then we have a limiting domain $M_p$ in $\mathbb S^2$
which may have cusp points on the boundary if $r_j+r_k=\pi/2$ for some $j\neq k$. If $r_i=0$ for some $i$ we remove the points on the $i$-axis
from $M_p$. Thus $M_p$ is a domain in $\mathbb S^2$ invariant under the reflections $\rho_i$ and with at most $6$ boundary components and whose boundary has at most $12$ cusp points. 

Since $p\in\partial\mathcal M$, at least one of the inequalities must be an equality. We consider various cases and we show that each term $E(U_{i,q})$
converges as $q\in\mathcal M$ converges to $p$ and thus extends continuously to $\partial\mathcal M$. We first consider the case when $r_i=0$
and $r_j+r_k<\pi/2$ for $i,j,k$ distinct. If all $r_j=0$, we can take $E(p)=0$ and this will give a continuous extension. Thus let's suppose that $r_k>0$.
In this case $M_p$ is a domain with smooth boundary and with at most $4$ boundary circles ($2$ if $r_j=0$). We can then see that $E(U_{i,q})$ is 
small for $q\in\mathcal M$ near $p$ while $E(U_{k,q})$ approaches $E(U_{k,p})$ where $U_{k,p}$ is a minimizing map from $M_p$ to $\mathbb B^3$
that commutes with the reflections and satisfies $U_{k,p}(\Gamma_{k,p})\subset \mathbb S^2$.  If $r_j=0$ then we take $E(U_{j,p})=0$ while
if $r_j>0$ we construct a minimizing map $U_{j,p}$ as we did for $U_{k,p}$ and we define 
\[ E(p)=\sum_{l=1}^3 a_l^2E(U_{l,p})
\]
and this gives a continuous extension in these cases. 

In case $r_j+r_k=\pi/2$ for some $j\neq k$ then if $i,j,k$ are distinct we have $E(U_{i,q})$ is small for $q\in\mathcal M$ near $p$ because the domain
$M_p$ is disconnected with $\gamma_1^{(i)}$ and $\gamma_2^{(i)}$ in different components, so we can choose a function that is odd under
$\rho_i$ and even under $\rho_j$ and $\rho_k$ and that is $1$ near $\gamma_1^{(i)}$ and has arbitrarily small energy. Therefore we can take 
$E(U_{i,p})=0$ in this case. We now consider the definition of $E(U_{j,p})$ for $j\neq i$. First note that we can define a map $U_{j,p}$ to be
an energy minimizer from the domain $M_p$ to $\mathbb B^3$ that takes $\Gamma_{j,p}$ to $\mathbb S^2$, and this defines $E(U_{j,p})$.
To see that $E(U_{j,q})$ converges to $E(U_{j,p})$ as $q$ approaches $p$ we observe that any finite energy map from $M_p$ to $\mathbb B^3$
can be approximated by a map that is constant in a small neighborhood of each cusp point on $\partial M_p$. For such maps we clearly have
continuity of the energy, and this suffices to prove continuity of the minimum value.  Thus we have shown that $E$ extends continuously to 
$\overline{\mathcal M}$ as claimed.

We also note that one of the maps $U_{i,p}$ is nontrivial except when $p$ is one of the $5$ points $(0,0,\pi/2)$, $(0,\pi/2,0)$, $(\pi/2,0,0)$,
$(0,0,0)$, and $(\pi/4,\pi/4,\pi/4)$. Note that when $p=(\pi/4,\pi/4,\pi/4)$ we have $r_i+r_j=\pi/2$ for all $i\neq j$, and so the union of $\Gamma_i$
and $\Gamma_j$ disconnects $\mathbb S^2$ for all $i\neq j$. This makes each value $E(U_{i,p})=0$ and therefore $E(p)=0$. At all other boundary
points the domain $M_p$ has at most $2$ connected components. This completes the proof.
\end{proof}

Since $\overline{\mathcal M}$ is compact and $E$ is continuous there, we have a point $p_0=(r_1,r_2,r_3)$ at which $E$ attains its maximum
value. We must show that $p_0$ lies in the interior; that is, $p_0\in\mathcal M$. 
\begin{theorem} There is a point $p_0\in\mathcal M$ such that the surface $M_{p_0}$ maximizes the function $F$ over all metrics with the three
reflection symmetries $\rho_1,\rho_2,\rho_3$. Therefore $M_{p_0}$ can be conformally immersed into $\mathbb B^3\times\mathbb B^3\times\mathbb B^3$
as a free boundary minimal surface.
\end{theorem}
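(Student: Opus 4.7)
The plan is to apply the preceding proposition, which produces a maximizer $p_0 \in \overline{\mathcal M}$ by compactness of $\overline{\mathcal M}$ and continuity of $E$, and then to rule out every boundary point. Once $p_0 \in \mathcal M$ is known, the final conclusion follows from Theorem \ref{theorem:characterization} applied with symmetry group $G = \mathbb Z_2 \times \mathbb Z_2 \times \mathbb Z_2$ and the metric on $M_{p_0}$ whose restriction to each $\Gamma_i$ is the boundary density $\lambda$ produced by Proposition \ref{proposition:maximizes}: the three components of $U_{i,p_0}$ all lie in $V_1^{(i)}$, and their concatenation provides the desired conformal free boundary minimal immersion into $B^3(a_1) \times B^3(a_2) \times B^3(a_3)$.

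Since $E$ is strictly positive at interior points and vanishes only at the five distinguished boundary points $(0,0,0)$, $(\pi/2,0,0)$, $(0,\pi/2,0)$, $(0,0,\pi/2)$, and $(\pi/4,\pi/4,\pi/4)$ identified in the preceding proposition, the maximum cannot be attained there. Next I would rule out boundary points with some $r_i = 0$ but $r_j + r_k < \pi/2$. Opening a pair of disks of spherical radius $\epsilon$ on the $i$-axis produces a nearby interior point $q \in \mathcal M$; by comparison with a rotationally symmetric harmonic function on an annulus, as in Proposition \ref{proposition:lower-bound} and Lemma \ref{lemma:upper-lower-bound}, one gets $E(U_{i,q}) \geq c/|\log \epsilon|$, while a cut-off and harmonic extension argument, as in Lemma \ref{lemma:epsilon23}, shows the other contributions $a_\ell^2 E(U_{\ell,q})$ change from their values at $p$ by at most $O(\epsilon^2)$. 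Since $\epsilon^2 = o(1/|\log \epsilon|)$, one obtains $E(q) > E(p)$, contradicting maximality.

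The delicate remaining case is when $p \in \partial \mathcal M$ satisfies $r_j + r_k = \pi/2$ for some $j \ne k$ with all $r_i > 0$, so that $\Gamma_j$ and $\Gamma_k$ pinch along four cusps. Here I would adapt the directional derivative argument from the proof of the min-max theorem of the previous section. A domain variation applied to the minimizer $U_p$, together with the fact that its stress-energy tensor is divergence free, yields a one-sided formula for $\partial E/\partial r_j$ as a boundary integral over $\Gamma_j$. Modulo a term coming from $(\nu U_j)^2$ that is uniformly bounded by maximum principle comparison with a rotationally symmetric harmonic function on the annulus bounded by $\Gamma_j$, this formula is dominated by the negative contributions $-\sum_{i \ne j,k} a_i^2 \int_{\Gamma_j} (U_i')^2 \, ds_0$. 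If these tangential derivatives remained bounded along a sequence approaching $p$, the limit maps $U_i$ would satisfy Neumann conditions on the full limit of $\Gamma_i$ and be constant on the relevant components of the limit of $M_p \cap \mathbb S^2_{i,+}$, contradicting $\|U_i\| = 1$ on $\Gamma_i$. Hence $\partial E/\partial r_j < 0$, and decreasing $r_j$ strictly increases $E$, again contradicting maximality.

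The main obstacle is the pinching case. Making the one-sided variation formula for $\partial E/\partial r_j$ rigorous up to the boundary, and quantifying the blow-up of $(U_i')^2$ near the cusps, require careful regularity and compactness arguments for the variational minimizers $U_{i,p}$, which are not given by explicit harmonic extensions as in the previous section. The strategy, however, precisely parallels the one executed there, and no essentially new ideas appear.
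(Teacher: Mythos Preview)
Your overall architecture matches the paper: use compactness and continuity to get a maximizer $p_0\in\overline{\mathcal M}$, rule out the five degenerate boundary points since $E$ vanishes there, and rule out the faces $r_i=0$ using Proposition \ref{proposition:lower-bound} exactly as you indicate. The divergence is in the pinching case $r_j+r_k=\pi/2$, and there your proposal does not match the paper and, as written, does not go through.

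The paper does \emph{not} use the derivative formula $\partial E/\partial r_j$ at all in this section. Instead it gives a direct quantitative lower bound: for the surface $M_\epsilon$ with $r_j+r_k=\pi/2-\epsilon$ it shows $E_i(M_\epsilon)\geq c\sqrt{\epsilon}$ by estimating from below the Dirichlet integral of a first $(SN_i)$ eigenfunction near the forming cusps. This is done by an explicit comparison with a harmonic function on a model region between two tangent circles, reducing to the Dirichlet minimizer on an annular sector, which one computes in closed form. The other two energies are controlled by $E_\ell(M_\epsilon)\geq E_\ell(M_p)-c\epsilon$ via a bi-Lipschitz diffeomorphism between $M_\epsilon$ and a domain for which monotonicity in the Neumann boundary applies. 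Since $\sqrt{\epsilon}\gg\epsilon$, one gets $E(M_\epsilon)>E(p)$.

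Your alternative via $\partial E/\partial r_j$ has two genuine problems. First, the derivative formula from the previous section was justified for the explicit Dirichlet--Neumann harmonic functions $u_i$; here the $U_{i,p}$ are variational half-harmonic maps into the sphere, and you have not established differentiability of $p\mapsto E(U_{i,p})$, nor the validity of the stress-energy boundary formula for this family, particularly as $p$ approaches the singular boundary. Second, your stated contradiction is wrong: a constant limit for $U_i$ is perfectly compatible with $\|U_i\|=1$ on $\Gamma_i$. The mechanism that actually forces blow-up of the tangential derivative near the cusps is that the $i$-th \emph{component} $(U_i)_i$ is odd under $\rho_i$ and hence vanishes on the fixed-point equator of $\rho_i$, which is exactly where the cusps form, while $(U_i)_i\to\pm 1$ away from the cusps; you never identify this. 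Even with that correction, turning a qualitative blow-up of the derivative at nearby interior points into the strict inequality $E(q)>E(p)$ for some interior $q$ still requires an integration argument you have not supplied. The paper's $\sqrt{\epsilon}$ test-function estimate sidesteps all of these issues.
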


\begin{proof} We must show that a maximum point $p_0$ for $E$ lies in $\mathcal M$. We first observe that $p_0$ cannot be one of the $5$ points
at which $E=0$, so we have $r_i<\pi/2$ for each $i$. We next show that all of the $r_i$ are positive. This follows from 
Proposition \ref{proposition:lower-bound}
since it implies that if $r_i=0$, then for small $\epsilon>0$ the surface $M_\epsilon$ with $r_i=\epsilon$  and the same $r_j$ for $j\neq i$ has energy at least 
$E(p_0)+c/|\log(\epsilon)|$ for a positive constant $c$.  

To complete the proof we must show that $r_j+r_k<\pi/2$ for all $j\neq k$. Suppose we have $r_j+r_k=\pi/2$ so that $\Gamma_j\cup\Gamma_k$
disconnects $\mathbb S^2$. If we denote by $i$ the third index, then we must have $0<r_i\leq \pi/4$ since one of $r_j,r_k$ must be at least $\pi/4$
and the sum of any two is at most $\pi/2$. Let us assume that $r_k\geq \pi/4$, so we have $r_j=\pi/2-r_k\leq \pi/4$, and $r_i\leq \pi/2-r_k$. If we had
$r_i=\pi/2-r_k$, then we would have both $E(U_{i,p_0})=E(U_{j,p_0})=0$, and by shrinking $r_i$ slightly we would increase $E(U_{j,p_0})$ and $E(U_{k,p_0})$
while keeping $E(U_{i,p_0})=0$. Thus we would increase $E$, and so, since $p_0$ is a maximum point, we must have $r_i<\pi/2-r_k$.

We now complete the proof by showing that the surface $M_\epsilon$ with the same $r_i$ and with 
$r_j+r_k=\pi/2-\epsilon$ for small $\epsilon>0$ has $E(M_\epsilon)>E(p_0)+c\sqrt{\epsilon}$ for a positive constant $c$. Since $E(U_{i,q})$ is the
maximum normalized first eigenvalue over conformal metrics on $M_q$, in order to give a lower bound it suffices to give it for the spherical metric.
It is easy to see that for $\epsilon$ small the lowest eigenfunction for $(SN_i)$ is odd under $\rho_i$ since the lowest even eigenvalue does not
go to zero with $\epsilon$. Thus as we let $\epsilon$ go to zero, the first eigenfunctions converge to a positive constant in one component of
$\mathbb S^2\setminus (\Gamma_j\cup\Gamma_k)$ and minus that constant in the other component. By scaling we may assume that the positive
constant is $1$. If we let $u_\epsilon$ be the eigenfunction, then we consider the function $v_\epsilon\equiv (1-2u_\epsilon)_+$ on $\mathbb{S}^2_{i,+}  \cap  M_\epsilon$. We note that 
the function $v_\epsilon$ is supported near the boundary cusp points of the region of $\mathbb S^2\setminus (\Gamma_{j,\epsilon}\cup\Gamma_{k,\epsilon})$ on which $u_\epsilon$ is positive. We give a lower bound on the Dirichlet integral of $v_\epsilon$ by $c\sqrt{\epsilon}$
for some $c>0$. We choose coordinates on a component $V_\epsilon$ of the support of $v_\epsilon$ so that for positive constants $\alpha_1\leq \alpha_2$,
and $\alpha$ with $0<\alpha\leq \alpha_1/2$
\[ V_\epsilon=\{(x,y):\ 0<y<\alpha,\ (x+\alpha_1+\epsilon/2)^2+y^2>\alpha_1^2,\ (x-\alpha_2-\epsilon/2)^2+y^2>\alpha_2^2\}.
\]
Thus $V_\epsilon$ approaches a spherical cusp when $\epsilon$ tends to $0$ with $\alpha$ fixed. We now consider functions $v$ with $v=1$ on 
$\bar{V}_\epsilon\cap\{y=0\}$ and $v=0$ on $\bar{V}_\epsilon\cap\{y=\alpha\}$. We show that the Dirichlet integral of such a function is bounded
below by $c\sqrt{\epsilon}$. If we minimize the Dirichlet integral over such functions, we produce a harmonic function which satisfies the indicated boundary conditions and is Neumann on the remainder of $\partial V_\epsilon$. 

We will derive the lower bound by comparison with a problem that we can explicitly compute. To do such a comparison, we first observe that if
we take a subdomain $W\subset V_\epsilon$ and we consider the same minimization problem on $W$, then its value on $W$ is smaller, since we can
restrict a minimizer for $V_\epsilon$ to $W$ to get a competitor for the problem on $W$ with smaller Dirichlet integral. Similarly if we enlarge $V_\epsilon$ to a domain $W$ that
replaces $\bar{V}_\epsilon\cap\{y=0\}$ by a curve lying in the lower half space $y\leq 0$, then the minimum among functions that are $1$ on the bottom
boundary component of $W$ is smaller than that for $V_\epsilon$
because we can take a minimizer on $V_\epsilon$ and extend it to be $1$ in $W\cap\{y\leq 0\}$ and it becomes a competitor for $W$ with the same Dirichlet integral.

We now consider a domain $V$ that is the exterior of a pair of circles of equal radius $b$ tangent to the $y$-axis at $0$ and with centers at $(b,0)$ and
$(-b,0)$. We restrict attention to the bounded region $V_0=\{(x,y)\in V:\ \sqrt{\epsilon}<y<b/4\}$. We show that the infimum
for functions that are $1$ on the bottom and $0$ on the top is bounded below by $c\sqrt{\epsilon}$ by comparing it with the region described in polar
coordinates $(r,\theta)$ by 
\[ W_0=\{(r,\theta)\in V:\ \sqrt{\epsilon}\sin(\theta)<r<b/2\sin(\theta)\}.
\]
The minimizing function $v$ on $W_0$ is given explicitly by
\[ v=\left(\frac{1}{\sqrt{\epsilon}}-\frac{2}{b}\right)^{-1}\left(\frac{\sin(\theta)}{r}-\frac{2}{b}\right).
\]
An easy estimate shows that $\int_{W_0}\|\nabla v\|^2\ dxdy\geq c\sqrt{\epsilon}$ (since $v$ is harmonic and satisfies either homogeneous Neumann 
or Dirichlet boundary conditions except at the bottom boundary, the value is the integral of the normal derivative of $v$ which is on the order of $1/\sqrt{\epsilon}$ while the length of the lower boundary is of the order of $\epsilon$). Since $V_0\subset W_0$ (note that $b/4$ is smaller than the 
minimum height of $r=b/2\sin(\theta)$ for points of $V$), and the Neumann boundary
components of $V_0$ are contained in those of $W_0$, we can take a minimizer on $V_0$ and extend it to $W_0$ as $0$ above the upper boundary
and $1$ below the lower boundary keeping the Dirichlet integral fixed. Thus the infimum for $V_0$ lies above that of $W_0$ and thus is at least
$c\sqrt{\epsilon}$. 

Finally we must show that a region of the form $V_0$ lies inside $V_\epsilon$ for some choice of $\alpha$ with the upper and lower boundary
components also inside those of $V_\epsilon$. Given a circle of radius $a$ centered at the point $(a+\epsilon/2,0)$, we determine a circle centered
at a point $(a+\delta,-\sqrt{\epsilon})$ of radius $a+\delta$ that contains the given circle and is tangent at some point. The radial segment from 
$(a+\delta,-\sqrt{\epsilon})$ to the point of tangency then passes through $(a+\epsilon/2,0)$. We then have $a+\delta=a+\sqrt{\epsilon+(\epsilon/2-\delta)^2}$  which we can solve as $\delta=1+\epsilon/4$. Thus the circle of radius $a+\delta$ centered at $(a+\delta,-\sqrt{\epsilon})$ is the smallest radius circle tangent to the $y$-axis at $(0, -\sqrt{\epsilon})$ containing the given circle. 

Now given our region $V_\epsilon$ determined by the circles of radius $\alpha_1\leq \alpha_2$, we choose $b=\alpha_2+1+\epsilon/4$ and we form the region $V_0$ using the circles of radius $b$ with the point of tangency being $(0,-\sqrt{\epsilon})$. We translate the region vertically by 
$\sqrt{\epsilon}$ and we see
that this region is contained in $V_\epsilon$ with $\alpha=b/4$, and it serves as a comparison region to show that the minimum Dirichlet integral
for our problem on $V_\epsilon$ is at least $c\sqrt{\epsilon}$. 

We have thus shown that if $r_j+r_k=\pi/2-\epsilon$ then $E_i(M_\epsilon)\geq c\sqrt{\epsilon}$ where $E_i$ denotes the energy of the map
associated with $(SN_i)$. To complete the proof we must show that if $p_0=(r_1,r_2,r_3)$ with $r_j+r_k=\pi/2$ and the other inequalities strict
then we have $E_j(M_\epsilon)\geq E_j(p_0)-c\epsilon$ and $E_k(M_\epsilon)\geq E_k(p_0)-c\epsilon$. To see this we observe that if $M_{q_1}$
and $M_{q_2}$ have the property that there is a diffeomorphism $\Psi:M_{q_1}\to M_{q_2}$ such that $\Psi^*(g_0)$ is near $g_0$ in the sense that
\[ (1-c\epsilon)g_0\leq \Psi^*(g_0)\leq (1+c\epsilon)g_0,
\]
then we have $|E_j({q_1})-E_j({q_2})|\leq c_1\epsilon$ for a constant $c_1$. To apply this we observe that for the point $q_1$ with $r_k$ reduced by $\epsilon$ (and $r_i$, $r_j$ unchanged) and the point $q_2$ with  $r_j$ reduced by $\epsilon$ (and $r_i$, $r_k$ unchanged) the domains $M_{q_1}$ and $M_{q_2}$ are related by such a diffeomorphism $\Psi$.
Since $(SN_j)$ is Neumann on $\Gamma_k$ we have $E_j({q_1})\geq E_j(p_0)$. On the other hand the existence of $\Psi$ tells us that
$E_k({q_1})\geq E_k({q_2})-c\epsilon$. Since $(SN_k)$ is Neumann on $\Gamma_j$ we have $E_k({q_2})\geq E_k(p_0)$. Combining these 
we have $E_k({q_1})\geq E_k(p_0)-c\epsilon$. Putting these together we have
\[ E({q_1})=\sum_{l=1}^3 a_l^2 E_l({q_1})\geq E(p_0)+c\sqrt{\epsilon}.
\]
Since $p_0$ is assumed to be a maximum point of $E$ on $\overline{\mathcal M}$, this contradiction shows that $p_0\in \mathcal M$, and completes the proof.
\end{proof}

\bibliographystyle{plain}

\end{document}